\numberwithin{equation}{section}
\newtheorem{maintheorem}{Theorem}
\newtheorem{maincoro}[maintheorem]{Corollary}
\newtheorem{theorem}{Theorem}[section]
\newtheorem*{theorem*}{Theorem}
\newtheorem{conjecture}[theorem]{Conjecture}
\newtheorem*{conjecture*}{Conjecture}
\newtheorem{lemma}[theorem]{Lemma}
\newtheorem{claim}[theorem]{Claim}
\newtheorem{proposition}[theorem]{Proposition}
\newtheorem{observation}[theorem]{Observation}
\theoremstyle{definition}{

\newtheorem{definition}[theorem]{Definition}
\newtheorem*{definition*}{Definition}

}
\theoremstyle{remark}{

\newtheorem*{remark*}{Remark}

}
\newcommand{\deq}{\stackrel{\scriptscriptstyle\triangle}{=}}
\newcommand{\E}{\mathbb{E}}
\renewcommand{\P}{\mathbb{P}}
\DeclareMathOperator{\var}{Var}
\newcommand{\whp}{\ensuremath{\text{\bf whp}}}
\newcommand{\gap}{\text{\tt{gap}}}
\newcommand{\tmix}{t_\textsc{mix}}
\newcommand{\Po}{\operatorname{Po}}
\newcommand{\Bin}{\operatorname{Bin}}
\newcommand{\given}{\, \big| \,}
\newcommand{\one}{\boldsymbol{1}}
\newcommand{\Simple}{\ensuremath{\text{\sc Simple}}}
\renewcommand{\epsilon}{\varepsilon}
\renewcommand{\phi}{\varphi}
\newcommand{\G}{\mathcal{G}}
\newcommand{\cS}{\mathcal{S}}
\newcommand{\cP}{\mathcal{N}}
\newcommand{\cB}{\mathcal{B}}
\newcommand{\cT}{\mathcal{T}}
\newcommand{\cF}{\mathcal{F}}
\newcommand{\vW}{\overline{W}}
\newcommand{\tx}{\text{\tt{tx}}}
\newcommand{\vE}{\bar{E}}
\DeclareMathOperator{\dist}{dist}
\newcommand{\dir}[1]{\bar{#1}}
\newcommand{\dx}{{\dir{x}}}
\newcommand{\dy}{{\dir{y}}}
\newcommand{\SRW}{\textsf{SRW}}
\newcommand{\NBRW}{\textsf{NBRW}}
\newcommand{\droot}{\textsc{Lr}}
\date{}
\begin{document}
\title[Cutoff for random walks on random regular graphs]{Cutoff phenomena for random walks on random regular graphs}

\author{Eyal Lubetzky}
\address{Eyal Lubetzky\hfill\break
Microsoft Research\\
One Microsoft Way\\
Redmond, WA 98052-6399, USA.}
\email{eyal@microsoft.com}
\urladdr{}

\author{Allan Sly}
\address{Allan Sly\hfill\break
Microsoft Research\\
One Microsoft Way\\
Redmond, WA 98052-6399, USA.}
\email{allansly@microsoft.com}
\urladdr{}

\begin{abstract}
The cutoff phenomenon describes a sharp transition in the convergence of a family of ergodic finite Markov chains
to equilibrium. Many natural families of chains are believed to exhibit cutoff, and yet establishing this fact is often extremely challenging. An important such family of chains is the random walk on $\G(n,d)$, a random $d$-regular graph on $n$ vertices. It is well known that almost every such graph for $d\geq 3$ is an expander, and even essentially Ramanujan,
implying a mixing-time of $O(\log n)$. According to a conjecture of Peres,
the simple random walk on $\G(n,d)$ for such $d$ should then exhibit cutoff \whp. As a special case of this, Durrett conjectured that the mixing time of the lazy random walk on a random $3$-regular graph is \whp\ $(6+o(1))\log_2 n$.

In this work we confirm the above conjectures, and establish cutoff in total-variation, its location and its optimal window,
both for simple and for non-backtracking random walks on $\G(n,d)$. Namely, for any fixed $d\geq3$, the \emph{simple} random walk on $\G(n,d)$ \whp\ has cutoff at $\frac{d}{d-2}\log_{d-1} n$ with window order $\sqrt{\log n}$.
Surprisingly, the \emph{non-backtracking} random walk on $\G(n,d)$ \whp\ has cutoff already at $\log_{d-1} n$ with \emph{constant} window order.
We further extend these results to $\mathcal{G}(n,d)$ for any $d=n^{o(1)}$ that grows with $n$ (beyond which the mixing time is $O(1)$),
where we establish concentration of the mixing time on one of two consecutive integers.
\end{abstract}

\maketitle

\vspace{-0.4in}
\begin{figure}[h]
\centering
\includegraphics[width=1.5in]{dreg_rw_asymp_nt}
\hspace{0.1in}
\includegraphics[width=1.5in]{dreg_nbrw_asymp_nt}
\end{figure}
\vspace{-0.2in}

\section{Introduction}\label{sec:intro}

A finite ergodic Markov chain is said to exhibit \emph{cutoff} if its distance from the stationary measure drops abruptly, over a negligible time period known as the \emph{cutoff window}, from near its maximum to near $0$. That is, one has to run the Markov chain until the cutoff point in order for it to even slightly mix, and yet running it any further would be essentially redundant.

Let $(X_t)$ be an aperiodic irreducible Markov chain on a finite state space $\Omega$ with transition kernel $P(x,y)$ and stationary distribution $\pi$. The worst-case total-variation distance to stationarity at time $t$ is defined by
$$ d(t) \deq \max_{x \in \Omega} \| \P_x(X_t \in \cdot)- \pi\|_\mathrm{TV}~,$$
where $\P_x$ denotes the probability given $X_0=x$, and where $\|\mu-\nu\|_\mathrm{TV}$, the \emph{total-variation distance} of two distributions $\mu,\nu$ on $\Omega$, is given by
$$\|\mu-\nu\|_\mathrm{TV} \deq \sup_{A \subset\Omega} \left|\mu(A) - \nu(A)\right| = \frac{1}{2}\sum_{x\in\Omega} |\mu(x)-\nu(x)|~.$$
We define $\tmix(\epsilon)$, the total-variation \emph{mixing-time} of $(X_t)$ for $0 < \epsilon < 1$, as
$$ \tmix(\epsilon) \deq \min\left\{t : d(t) < \epsilon \right\}~.$$
Next, let $(X_t^{(n)})$ be a family of such chains, each with its corresponding worst-case total-variation distance from stationarity $d_n(t)$, its mixing-times $\tmix^{(n)}$, etc. We say that this family of chains exhibits \emph{cutoff} at time $\tmix^{(n)}(\frac{1}{4})$ iff the following sharp transition in its convergence to stationarity occurs:
\begin{equation}\label{eq-cutoff-def}\lim_{n\to\infty} \tmix^{(n)}(\epsilon) \big/ \tmix^{(n)}(1-\epsilon)=1 \quad\mbox{ for any $0 < \epsilon < 1$}~.\end{equation}
The rate of convergence in \eqref{eq-cutoff-def} is addressed by the following: A sequence $w_n=o\big(\tmix^{(n)}(\frac{1}{4})\big)$ is called a \emph{cutoff window} for the family of chains $(X_t^{(n)})$ if  for any $\epsilon > 0$ there exists some $c_\epsilon > 0$ such that for all $n$,
\begin{equation}\label{eq-window-def}\tmix^{(n)}(\epsilon) - \tmix^{(n)}(1-\epsilon) \leq c_\epsilon w_n~.\end{equation}
That is, there is cutoff at time $t_n=\tmix^{(n)}(\frac14)$ with window $w_n$ if and only if
$$\tmix^{(n)}(s) = \left(1+O(w_n)\right)t_n = (1+o(1))t_n~\mbox{ for any fixed $0 < s < 1$}~,$$
or equivalently, cutoff at time $t_n$ with window $w_n$ occurs if and only if
$$\left\{\begin{array}
  {l}\lim_{\lambda\to\infty} \liminf_{n\to\infty} \;d_n(t_n - \lambda w_n) = 1~,\\
  \lim_{\lambda \to \infty} \limsup_{n\to\infty} d_n(t_n + \lambda w_n) = 0~.
\end{array}\right.$$

Although many natural families of chains are believed to exhibit cutoff, determining that cutoff occurs proves to be
an extremely challenging task even for fairly simple chains, as it often requires
the full understanding of the delicate behavior of these chains around the mixing threshold. Before reviewing some of the related work in this area, as well as the conjectures that our work addresses, we state a few of our main results.

The focus of this paper is on random walks on a random regular graph, namely on $G \sim \G(n,d)$, a graph uniformly distributed over the set of all $d$-regular graphs on $n$ vertices, for $d\geq 3$ and $n$ large. This important class of random graphs has been extensively studied, among other reasons due to the remarkable expansion properties of its typical instance. One useful implication of these expansion properties is the rapid mixing of the corresponding \emph{simple random walk} (\SRW), the chain whose states are the vertices of the graph, and moves at each step to a uniformly chosen neighbor. Namely, the \SRW\ on such a graph has a mixing time of $O(\log n)$ \emph{with high probability} (\whp), that is, with probability tending to $1$ as $n\to\infty$.

Our first result establishes both cutoff and its optimal window for the \SRW\ on a typical instance of $\G(n,d)$ for any $d\geq 3$ fixed. As we later describe, this settles conjectures of Durrett \cite{Durrett} and Peres \cite{Peres} in the affirmative.

\begin{figure}
\centering \includegraphics[width=3.5in]{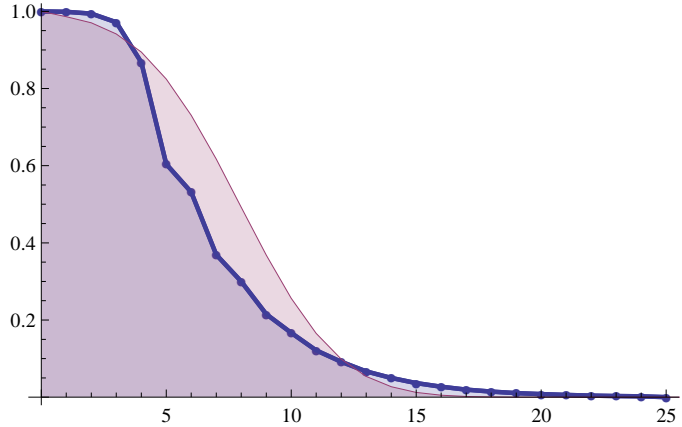}
\caption{Distance from stationarity
along time for the \SRW\ on a random $6$-regular graph on $n=5000$
vertices
.}
\label{fig:cutoff-rw}
\end{figure}

\begin{maintheorem}\label{thm-rw}Let $G \sim \mathcal{G}(n,d)$ be a random regular graph for $d \geq 3$ fixed. Then \whp, the \emph{simple}
random walk on $G$ exhibits cutoff at $\frac{d}{d-2}\log_{d-1} n$ with a window of order $\sqrt{\log n}$. Furthermore, for any fixed $0<s<1$, the worst case total-variation mixing time \whp\ satisfies
\begin{align*}
\tmix(s)  = \frac{d}{d-2}\log_{d-1} n - (\Lambda+o(1))\Phi^{-1}(s)  \sqrt{\log_{d-1} n}~,
\end{align*}
where $\Lambda=\frac{2\sqrt{d(d-1)}}{(d-2)^{3/2}}$ and $\Phi$ is the c.d.f. of the standard normal.
\end{maintheorem}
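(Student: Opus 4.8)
\medskip
\noindent\textbf{Proof proposal.}
The plan is to reduce the simple random walk on $G$ to a one–dimensional walk together with the non-backtracking chain, exploiting that $\cT_d$, the infinite $d$-regular tree, is the universal cover of \emph{every} $d$-regular graph. Fix the start vertex $v$, let $o\in\cT_d$ lie above $v$, let $\tilde X_t$ be the lift of $X_t$ (so $(\tilde X_t)$ is the \SRW\ on $\cT_d$ from $o$), and put $L_t\deq\dist_{\cT_d}(o,\tilde X_t)$. The law of $\tilde X_t$ is invariant under the root-fixing automorphisms of $\cT_d$, which act transitively on each sphere, so conditionally on $\{L_t=\ell\}$ the vertex $\tilde X_t$ is uniform on the sphere of radius $\ell$; projecting down, the conditional law of $X_t$ equals that of the non-backtracking random walk (\NBRW) started at $v$ and run for $\ell$ steps. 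Writing $Y_\ell$ for that endpoint, one gets the \emph{exact} identity, valid for each fixed graph,
\begin{equation*}
\P_v\big(X_t\in\cdot\big)=\sum_{\ell\ge0}\P\big(L_t=\ell\big)\,\P_v\big(Y_\ell\in\cdot\big)\,.
\end{equation*}
Moreover $L_t$ is the nearest-neighbour walk on $\Z_{\ge0}$ that moves up with probability $\tfrac{d-1}{d}$ and down with probability $\tfrac1d$, reflected at $0$; the reflection costs only an $O_\P(1)$ correction (the reflection term converges a.s.), so by the CLT, uniformly for $t\asymp\log n$, $L_t=\tfrac{d-2}{d}t+\tfrac{2\sqrt{d-1}}{d}\sqrt{t}\,(Z+o_\P(1))$ with $Z$ standard normal. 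The randomness of $G\sim\G(n,d)$ now enters only through two inputs about a typical instance.

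The first input --- the crux --- is a \emph{sharp} equidistribution estimate for the \NBRW: \whp\ over $G$, for every $v$ and every $\ell=\log_{d-1}n+m$ with $m\ge1$ one has $\|\P_v(Y_\ell\in\cdot)-\pi\|_\mathrm{TV}\le\epsilon(m)$ with $\epsilon(m)\to0$ as $m\to\infty$ (i.e.\ the \NBRW\ has cutoff at $\log_{d-1}n$). Here $\P_v(Y_\ell=u)$ equals the number of length-$\ell$ non-backtracking walks from $v$ to $u$ divided by $d(d-1)^{\ell-1}$, and for $\ell$ of this size the typical count is Poisson-like with mean $d(d-1)^{\ell-1}/n\asymp(d-1)^m$, since a non-backtracking walk of length $\ell$ self-intersects only $O(1)$ times in expectation (an intersection after $k$ further steps requires a cycle of length $\le k$, of probability $\asymp(d-1)^k/n$, summable). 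Turning this into concentration of the count for all but an $o(1)$-fraction of $u$, uniformly over the starting point, is where the work lies: it uses the count of closed non-backtracking walks --- controlled by the near-Ramanujan non-backtracking spectrum of $G$ together with the Poissonian scarcity of short cycles --- and crucially must \emph{beat} the crude $L^2$/spectral bound, which by itself only certifies mixing at $(1+\epsilon)\log_{d-1}n$ and hence would misplace the leading constant. The second input is geometric and routine: from the configuration model, \whp\ $|\{u:\dist_G(v,u)=r\}|=(1+o(1))d(d-1)^{r-1}$ for all $r\le(1-o(1))\log_{d-1}n$ and all $v$, so the set $A_K\deq\{u:\dist_G(v,u)\le\log_{d-1}n-K\}$ has $\pi(A_K)=O((d-1)^{-K})$, uniformly in $v,n$.

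With these in hand the two bounds follow by inverting the CLT for $L_t$. For the upper bound, the mixture identity, the \NBRW\ estimate and $\|\cdot\|_\mathrm{TV}\le1$ give, for every fixed $m$, $d_n(t)\le\P(L_t<\log_{d-1}n+m)+\epsilon(m)$; writing $t=\tfrac{d}{d-2}\log_{d-1}n-\delta$, substituting the CLT (so that $\{L_t<\log_{d-1}n+m\}$ has probability $\Phi\big(\delta/(\Lambda\sqrt{\log_{d-1}n})\big)+o(1)$, where $\Lambda=\tfrac{2\sqrt{d(d-1)}}{(d-2)^{3/2}}$ is precisely the rescaling of the drift $\tfrac{d-2}{d}$ and of the fluctuation $\tfrac{2\sqrt{d-1}}{d}\sqrt t$ of $L_t$ under the change of variables $\ell\mapsto t$ near $t\asymp\tfrac{d}{d-2}\log_{d-1}n$), and letting $n\to\infty$ then $m\to\infty$, yields $d_n(t)\le\Phi\big(\delta/(\Lambda\sqrt{\log_{d-1}n})\big)+o(1)$. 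For the lower bound, a length-$L_t$ non-backtracking walk witnesses $\dist_G(v,X_t)\le L_t$, so the geometric input gives
\begin{equation*}
d_n(t)\ge\P_v(X_t\in A_K)-\pi(A_K)\ge\P\big(L_t\le\log_{d-1}n-K\big)-O\big((d-1)^{-K}\big)\,,
\end{equation*}
and the same CLT substitution, now with $n\to\infty$ then $K\to\infty$, gives $d_n(t)\ge\Phi\big(\delta/(\Lambda\sqrt{\log_{d-1}n})\big)-o(1)$. Hence $d_n(t)=\Phi\big((\tfrac{d}{d-2}\log_{d-1}n-t)/(\Lambda\sqrt{\log_{d-1}n})\big)+o(1)$, and solving $d_n(\tmix(s))=s$ gives $\tmix(s)=\tfrac{d}{d-2}\log_{d-1}n-(\Lambda+o(1))\Phi^{-1}(s)\sqrt{\log_{d-1}n}$, in particular cutoff at $\tfrac{d}{d-2}\log_{d-1}n$ with window $\sqrt{\log n}$.

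The main obstacle is the sharp \NBRW\ equidistribution of the second paragraph: the one-dimensional reduction is exact and elementary and the shell-size estimate is a first-moment computation, but proving that a length-$(\log_{d-1}n+O(1))$ non-backtracking walk is nearly uniform --- quenched, and uniformly over all starting points --- requires control of the full distribution of the number of such walks between pairs of vertices, not merely its first two moments, and a careful accounting of how short cycles, the near-Ramanujan bound, and the uniformity over starting points interact.
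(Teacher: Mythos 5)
Your reduction itself is sound: the cover-tree identity (conditioned on the tree-distance $L_t=\ell$, the projected walk is the endpoint of an $\ell$-step \NBRW) together with the CLT for $L_t$ is exactly the duality the paper attributes to Peres (Observation 5.3), and inverting the Gaussian profile does give the stated formula for $\tmix(s)$ once your ``first input'' is available. The gap is that this first input --- \whp\ over $G$, for \emph{every} starting vertex and every $\ell=\log_{d-1}n+m$, $\|\P_v(Y_\ell\in\cdot)-\pi\|_{\mathrm{TV}}\le\epsilon(m)$ with $\epsilon(m)\to0$ --- is precisely the worst-case \NBRW\ cutoff (Theorem \ref{thm-nbrw}), and you do not prove it. The sketch you offer (first-moment self-intersection counts, ``Poisson-like'' path counts, the near-Ramanujan non-backtracking spectrum) is heuristic: as you yourself note, the spectral/$L^2$ bound only certifies mixing at $(1+\epsilon)\log_{d-1}n$, an error of order $\epsilon\log n\gg\sqrt{\log n}$ which would destroy the leading constant, and no mechanism that beats it is supplied. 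At the relevant times the number of non-backtracking paths between a typical pair is a Poisson-type variable of \emph{constant} mean, so concentration for individual pairs fails and one needs control of the full distribution uniformly in the starting point; in the paper this is the delicate Poissonization of Proposition \ref{prop-poisson}, which treats polylogarithmically many targets jointly to amplify the error probabilities. So as written, the central step of your argument is missing.

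It is also worth noting that the gap is avoidable, because you are demanding strictly more than Theorem \ref{thm-rw} needs: this is where the paper's fixed-$d$ proof diverges from yours (the paper only uses your route, via Theorem \ref{thm-nbrw-large-d}, for growing $d$). Since the \SRW's tree-distance is spread over a window of order $\sqrt{\log n}$, it suffices to lower-bound, for most pairs of vertices (after an $O(\log\log n)$ burn-in to $K$-roots, Lemma \ref{lem-srw-to-good-roots}), the number of \emph{simple} paths of length $2T+\ell$ with $T=\lfloor\frac12\log_{d-1}n\rfloor$ and $2K\le\ell\le\frac1{20}\log_{d-1}n$: because the excess $\ell\ge 2K\to\infty$, the relevant cut sizes in the configuration model are binomials with mean $\Omega(\log^2 n)$ and elementary Chernoff bounds give $\cS_{2T+\ell}(u,v)\ge(1-o(1))d(d-1)^{2T+\ell-1}/n$ (Lemma \ref{lem-k-roots-intersections}). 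Writing the total-variation upper bound as $\sum_v\max\{1/n-\P(W_t=v),0\}$, only these one-sided pointwise lower bounds are needed, and the lower bound on $d_n(t)$ comes from the ball of radius $\log_{d-1}n-K$ exactly as in your last display. If you prefer to keep your two-step route, you must actually prove the worst-case \NBRW\ cutoff with constant (or at least $o(\sqrt{\log n})$) window, i.e.\ reproduce the content of Theorem \ref{thm-nbrw}.
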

The essence of the cutoff for the \SRW\ on a typical $G\sim\G(n,d)$ lies in the behavior of its counterpart, the
non-backtracking random walk (\NBRW), that does not traverse the
same edge twice in a row (formally defined soon). Curiously, this chain also exhibits cutoff on $\G(n,d)$ \whp, only this time the cutoff window is \emph{constant}: \eqref{eq-window-def} holds for $w_n=1$ and $c_\epsilon$ logarithmic in $1/\epsilon$:
\begin{maintheorem}\label{thm-nbrw}Let $G \sim \mathcal{G}(n,d)$ be a random regular graph for $d \geq 3$ fixed. Then \whp, the \emph{non-backtracking}
random walk on $G$ has cutoff at $\log_{d-1}(dn)$ with a constant-size window. More precisely, for any fixed $\epsilon >0$, the worst case total-variation mixing time \whp\ satisfies
\begin{align*}
\tmix(1-\epsilon) &\geq \lceil\log_{d-1} (dn)\rceil - \lceil\log_{d-1}(1/\epsilon)\rceil ~,\\
\tmix(\epsilon) & \leq \lceil\log_{d-1} (dn)\rceil + 3\lceil\log_{d-1}(1/\epsilon)\rceil + 4~.
\end{align*}
\end{maintheorem}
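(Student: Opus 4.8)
The lower bound is deterministic and holds for every $d$-regular graph. From any directed edge $\vec e$ there are exactly $(d-1)^t$ non-backtracking walks of length $t$, so the law $\mu_t^{\vec e}$ of the \NBRW\ at time $t$ is supported on at most $(d-1)^t$ of the $dn$ directed edges, whence $d(t)\ge 1-(d-1)^t/(dn)$. If $t\le\lceil\log_{d-1}(dn)\rceil-\lceil\log_{d-1}(1/\epsilon)\rceil$ then $(d-1)^t\le\epsilon\, dn$ (a short computation with the ceilings), so $d(t)\ge 1-\epsilon$; this yields the asserted lower bound on $\tmix(1-\epsilon)$ with no use of randomness.

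For the upper bound I would pass to $L^2$. Writing $\pi$ for the uniform law on the $dn$ directed edges and $N_t(\vec e,\vec f)$ for the number of length-$t$ non-backtracking walks from $\vec e$ to $\vec f$, Cauchy--Schwarz gives
\[
4\,\bigl\|\mu_t^{\vec e}-\pi\bigr\|_\mathrm{TV}^2\ \le\ dn\sum_{\vec f}\mu_t^{\vec e}(\vec f)^2-1\ =\ \frac{dn}{(d-1)^{2t}}\sum_{\vec f}N_t(\vec e,\vec f)^2\ -\ 1 .
\]
Since $\sum_{\vec f}N_t(\vec e,\vec f)=(d-1)^t$, a second Cauchy--Schwarz forces $\sum_{\vec f}N_t(\vec e,\vec f)^2\ge(d-1)^{2t}/(dn)$, so the whole problem reduces to the matching upper bound: the \emph{collision count} $\sum_{\vec f}N_t(\vec e,\vec f)^2$ --- the number of ordered pairs of length-$t$ non-backtracking walks from $\vec e$ terminating at a common directed edge, equivalently $(d-1)^{2t}$ times the probability that two independent \NBRW's from $\vec e$ meet at time $t$ --- is at most $(d-1)^t+(1+o(1))(d-1)^{2t}/(dn)$, uniformly over $\vec e$, whp. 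The first term is the "diagonal" of identical walks; for $t=\log_{d-1}(dn)+r$ it equals $(d-1)^r\cdot dn$, a factor $(d-1)^{-r}$ smaller than the target main term $(d-1)^{2t}/(dn)=(d-1)^{2r}\cdot dn$, and this separation is exactly what produces a window of \emph{constant} order: substituting back gives
\[
4\,\bigl\|\mu_t^{\vec e}-\pi\bigr\|_\mathrm{TV}^2\ \le\ \frac{dn}{(d-1)^{t}}+o(1)\ =\ (d-1)^{-r}+o(1),
\]
so an overshoot of $r=O\bigl(\log_{d-1}(1/\epsilon)\bigr)$ past $\log_{d-1}(dn)$ drives $d(t)$ below $\epsilon$; the factors of $\log_{d-1}(1/\epsilon)$ and the additive constant in the statement are precisely the cost of squaring in the $L^2$-to-total-variation step together with the union bound discussed below.

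To estimate the off-diagonal part of the collision count I would work in the configuration model. Any pair of length-$t$ trajectories follows a common prefix, branches at some depth $a$ into two directed edges out of a common vertex, and thereafter must re-converge, which forces a cycle through that vertex. Summing the annealed collision probability over $a$ and over the combinatorial type of the reconverging excursion, the bulk comes from branch depths with $t-a\ge\log_{d-1}(dn)+\omega(1)$: by then each of the two sub-walks is within a $1+o(1)$ factor of uniform --- this is where the essentially-Ramanujan expansion of $\G(n,d)$ enters, in the (bootstrapped) form of the $L^2$ estimate itself --- so the two sub-walks coincide with probability $(1+o(1))/(dn)$, and the geometric sum over $a$ returns precisely $(1+o(1))(d-1)^{2t}/(dn)$. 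All shorter reconverging excursions need a short cycle through the branch vertex, and since such cycles are rare in $\G(n,d)$ (and balls of radius up to $\log_{d-1}n$ have bounded tree-excess whp), they contribute only $O\bigl((d-1)^{-r}(d-1)^{2t}/(dn)\bigr)$, which is absorbed.

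The real work lies in two places. The worst-case distance $d(t)$ forces the collision bound to hold for all $dn$ starting edges simultaneously, not merely in expectation; I would obtain this from an exponential tail bound on $\sum_{\vec f}N_t(\vec e,\vec f)^2$ --- proved by Azuma-type martingale concentration along the edge-revealing filtration of the configuration model --- followed by a union bound over $\vec e$, which is what forces the extra $\lceil\log_{d-1}(1/\epsilon)\rceil$ and the additive constant. The more delicate point, and the main obstacle, is that once $t$ exceeds the injectivity radius (which at the worst vertex of $\G(n,d)$ is only $\Theta(1)$) the trajectories genuinely interact with cycles: one must enumerate the possible topologies of a reconverging pair of non-backtracking walks and show that the resulting overcounting --- including contributions from the "transition regime" in which a sub-walk has spread over a ball but not yet over the whole graph --- is lower order compared with $(d-1)^{2t}/(dn)$, uniformly. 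This rests on the standard first-moment estimates for short cycles and for short-cycle-dense neighbourhoods in random regular graphs. Finally, transferring the configuration-model estimates to the uniform model $\G(n,d)$ is routine, by conditioning on simplicity.
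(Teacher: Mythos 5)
Your lower bound is exactly the paper's argument (support of the \NBRW\ at time $t$ has size at most $(d-1)^t$), and is fine. Your upper bound, however, departs from the paper --- you go through $L^2$ and a worst-case collision-count bound, whereas the paper works directly in $L^1$: it shows that for a ``directed $L$-root'' starting edge the path counts $\cP_{T+K-1}(\dx,\dy)$ to uniformly chosen targets are sandwiched between explicit binomials, so that $\E\left[\left|Z/\mu-1\right|\mid\cF_G\right]\approx 1/\sqrt{\mu}\le\epsilon$ (Proposition~\ref{prop-poisson}), with an $O(\log_{d-1}(1/\epsilon))$ burn-in (Lemma~\ref{lem-nbrw-to-good-roots}) to reach such a root. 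The $L^2$ route is legitimate in principle and would even give a slightly better constant in front of $\log_{d-1}(1/\epsilon)$, but as written it has two genuine gaps. First, your identification of the main term of the off-diagonal collision count is circular and, worse, mis-parameterized: at the relevant times $t=\log_{d-1}(dn)+r$ with $r=O(1)$, the ``bulk'' regime you invoke (branch depth $a$ with $t-a\ge\log_{d-1}(dn)+\omega(1)$) is empty, since it would force $a\le r-\omega(1)<0$. The sub-walks after branching have length at most $\log_{d-1}(dn)+O(1)$ and are \emph{not} pointwise within $1+o(1)$ of uniform (their pointwise ratios are Poisson-like constants); so the heuristic ``each sub-walk is mixed, hence they collide with probability $(1+o(1))/(dn)$'' cannot be used, and invoking ``the $L^2$ estimate itself, bootstrapped'' is assuming what you are trying to prove. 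The correct route is an annealed second-moment computation in the configuration model (an edge between the two exposed spheres, as in the paper's $\tilde U,\tilde V$ construction), which you do not carry out.

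Second, and more seriously, the step you defer to ``Azuma-type martingale concentration along the edge-revealing filtration, then a union bound over $\vec e$'' is precisely where the difficulty of the theorem lies, and a naive Azuma bound fails: exposing or swapping a single pairing within distance $O(1)$ of the starting edge reroutes a constant fraction of all $(d-1)^t$ trajectories and can change $\sum_{\vec f}N_t(\vec e,\vec f)^2$ by a constant factor of its own magnitude, so the worst-case increments of the exposure martingale are of the same order as the quantity itself and no useful exponential tail follows. Moreover the union bound over the $dn$ starting edges requires failure probability $o(1/n)$ per edge, which is exactly what is hard to obtain for a single scalar functional dominated by the local geometry near the start (where constant-probability defects such as nearby short cycles live). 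The paper's way around this is the combination of (i) a short burn-in so that one only needs the estimate at directed $L$-roots, and (ii) the binning/averaging device: the quantity of interest is an average over $\Theta(\log^2 n)$ well-separated targets, each of whose path counts is stochastically sandwiched between i.i.d.\ binomials, so Chernoff and a bounded-increment Azuma bound over the \emph{targets} (not over the edge exposure of the whole graph) give $o(n^{-2})$ failure probability per bin. Your proposal offers no working substitute for this mechanism, so the uniform collision-count bound --- which you yourself call ``the real work'' --- remains unproved.
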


\begin{figure}
\centering \includegraphics[width=3.5in]{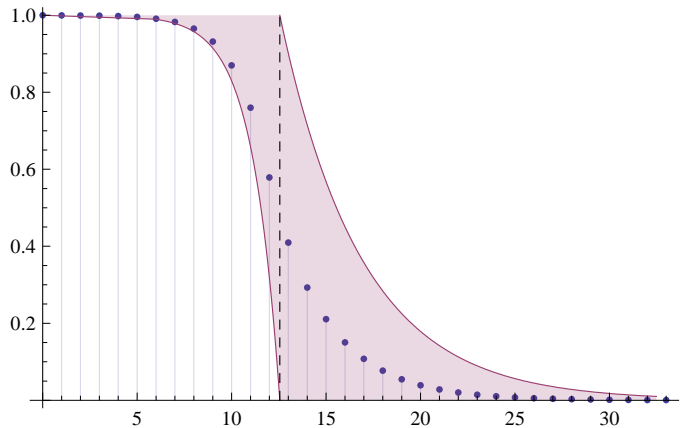}
\caption{Distance from stationarity
along time for the \NBRW\ on a random $3$-regular graph on $n=2000$
vertices
. Red curves represent
a ($4\log_{d-1}(1/\epsilon)$)-wide cutoff window.}
\label{fig:cutoff-nbrw}
\end{figure}

To gain insight to the above behaviors of the \SRW\ and \NBRW\ on a typical instance of $\G(n,d)$,
note that \whp, the random $d$-regular graph is locally-tree-like, its diameter is $(1+o(1))\log_{d-1} n$ and this is also the distance between a typical pair of vertices. In a $d$-regular tree, the height of a \SRW, started at the root, is analogous to a biased $1$-dimensional random walk with speed $(d-2)/d$. Hence, the time it takes this walk to reach height $\log_{d-1} n$
is concentrated around $\frac{d}{d-2}\log_{d-1}n$ with a standard deviation of order $\sqrt{\log n}$.
Our results establish that at this time, the walk on $\G(n,d)$ is mixed. One of the keys to showing this is estimating the number of simple paths of length just beyond $\log_{d-1}n$ between most pairs of vertices (see Lemma~\ref{lem-k-roots-intersections} for a more precise statement).
In comparison, as the \NBRW\ started at the root of a tree is forbidden from backtracking up, it reaches height $\log_{d-1} n$ after precisely $\log_{d-1} n$ steps, hence the sharper cutoff window.


Establishing the above theorems requires a careful analysis of the local geometry around typical pairs of vertices, via a Poissonization argument. Namely, we show that the number of edges between certain neighborhoods of two prescribed vertices is roughly Poisson. Similar arguments then allow us to formulate analogous results for the case of regular graphs of high degree, that is, $\G(n,d)$ where $d$ is allowed to tend to $\infty$ with $n$, up to $n^{o(1)}$.

\subsection{Related work}
The cutoff phenomenon was first identified for the case of random transpositions on the symmetric group in \cite{DS}, and for the case of the riffle-shuffle and random walks on the hypercube in \cite{Aldous}. In their seminal paper \cite{AD} from 1985,
Aldous and Diaconis established cutoff (and coined the term) for the top-in-at-random card shuffling process. See \cite{Diaconis} and \cite{CS} for more on the cutoff phenomenon, as well as \cite{SaloffCoste} for a survey of this phenomenon for random walks on finite groups.

Unfortunately, there are relatively few examples where cutoff has been rigorously shown, whereas many important chains are conjectured to exhibit cutoff. 
Indeed, merely deciding whether a given family of finite Markov chains exhibits cutoff or not (without pinpointing the precise cutoff location) can already be a formidable task (see \cite{Diaconis} for more on this problem).

In 2004, Peres \cite{Peres} proposed the condition $\tmix(\frac14) \cdot \gap \to\infty$ as a cutoff criterion, where $\gap$ is the spectral gap of the chain (i.e., $\gap\deq1-\lambda$ where $\lambda$ is the largest nontrivial eigenvalue of the transition kernel). While this ``product-condition'' is indeed necessary for cutoff in a family of reversible chains, there are known examples
where this condition holds yet there is no cutoff (see \cite{CS}*{Section 6}). Nevertheless, Peres conjectured that for many natural chains the product-condition does imply total-variation cutoff (e.g., this was recently verified in \cite{DLP} for the class of birth-and-death chains).

An important family of chains, mentioned in this context in \cite{Peres}, is \SRW s on transitive ``expander'' graphs of fixed degree $d$ (graphs where the second eigenvalue of the adjacency matrix is bounded away from $d$). Chen and Saloff-Coste \cite{CS} verified that such chains exhibit cutoff when measuring the convergence to equilibrium via other (less common) norms, and mentioned the remaining open problem of proving total-variation cutoff.

On the other hand, it is well known that almost every $d$-regular graph for $d\geq 3$ is an expander (see \cite{BS}, and also \cite{Pinsker} for an analogous statement under a closely related combinatorial definition of expansion). In fact, it was shown by Friedman \cite{Friedman} that the second eigenvalue of the adjacency matrix of $G\sim\G(n,d)$ for $d\geq 3$ is \whp\ $2\sqrt{d-1}+o(1)$, essentially as far from $d$ as possible. Thus, random regular graphs are a valuable tool for constructing sparse expander graphs, and furthermore, for any fixed $d\geq 3$, any
statement that holds \whp\ for $\G(n,d)$ also holds for almost every $d$-regular expander. See, \cite{Bollobas2},\cite{JLR} and also \cite{Wormald} for more on the thoroughly studied model $\G(n,d)$.

By the above, it follows that for any fixed $d\geq3$, the mixing time of the \SRW\ on $G \sim\G(n,d)$ is typically $O(\log n)$, whereas its $\gap$ is bounded away from $0$.
Hence, if we consider the \SRW\ on graphs $\{G_n\sim \G(n,d)\}$ for some fixed $d\geq3$, then the product-condition typically holds, and according to the above conjecture of Peres, these chains should exhibit cutoff \whp.

A special case of this was conjectured by Durrett, following his work with
Berestycki \cite{BD} studying the \SRW\ on a random $3$-regular graph $G\sim\G(n,3)$. They showed that at
time $c\log_2 n$ the distance of the walk from its starting point is asymptotically $(\frac{c}3\wedge 1)\log_2 n$. This implies a lower bound of $3\log_2 n$ for the asymptotic mixing time of random $3$-regular graphs, and in particular, an asymptotic lower bound of $6\log_2 n$ for the \emph{lazy} random walk (the lazy version of a chain with transition kernel $P$ is the chain whose transition kernel is $\frac12(P+I)$, i.e., in each step it stays in place with probability $\frac12$, and otherwise it follows the rule of the original chain). In \cite{Durrett}, Durrett conjectured that this latter bound is tight:

\begin{conjecture*}[Durrett \cite{Durrett}*{Conjecture 6.3.5}]
The mixing time for the lazy random walk on the random $3$-regular graph is asymptotically $6\log_2 n$.
\end{conjecture*}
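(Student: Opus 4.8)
The statement is the $d=3$ instance of Theorem~\ref{thm-rw}, transported from the \SRW\ to its lazy counterpart: for $d=3$ the \SRW\ cutoff location is $\tfrac{d}{d-2}\log_{d-1}n=3\log_2 n$, and replacing the kernel $P$ by $\tfrac12(P+I)$ should double this to $6\log_2 n$, the factor of two being precisely the slowdown incurred by holding in place with probability $\tfrac12$. The plan is to make this transport rigorous; the only inputs needed are Theorem~\ref{thm-rw} itself together with the concentration of the \SRW\ displacement on $\G(n,3)$ established by Berestycki and Durrett~\cite{BD} (which in any case is subsumed by the lower-bound analysis behind Theorem~\ref{thm-rw}). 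Write $(X_m)$ for the \SRW\ on $G\sim\G(n,3)$ and $(Y_t)$ for the lazy walk, and let $N_t\sim\Bin(t,\tfrac12)$ be the number of non-lazy moves, so that $\P_x(Y_t\in\cdot)=\sum_{m\ge0}\P(N_t=m)\,\P_x(X_m\in\cdot)$ is a $\Bin(t,\tfrac12)$-mixture of the \SRW\ laws. Since $N_t$ is concentrated about $t/2$ with fluctuations of order $\sqrt{t}=\Theta(\sqrt{\log n})$, i.e.\ on the same scale as the \SRW\ cutoff window and hence $o(\log n)$, these fluctuations will not move the first-order location $6\log_2 n$.

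\emph{Upper bound.} Applying a Markov kernel is a contraction in total-variation distance toward stationarity, so the worst-case total-variation distance $d_P(m)$ of any chain is non-increasing in $m$; consequently $d_{\mathrm{lazy}}(t)\le\P(N_t\le m_0)+d_P(m_0)$ for every $m_0$. Given $\epsilon>0$, take $m_0=\tmix^{\SRW}(\epsilon/2)$, which by Theorem~\ref{thm-rw} is whp $3\log_2 n+O_\epsilon(\sqrt{\log n})$ (here $\Phi^{-1}(\epsilon/2)<0$), so that $d_P(m_0)<\epsilon/2$; then pick $t=2m_0+C_\epsilon\sqrt{\log n}$ with $C_\epsilon$ large enough that a Chernoff bound gives $\P(N_t\le m_0)<\epsilon/2$. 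This yields $\tmix^{\mathrm{lazy}}(\epsilon)\le 6\log_2 n+O_\epsilon(\sqrt{\log n})=(6+o(1))\log_2 n$.

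\emph{Lower bound.} Fix $\eta\in(0,1)$, put $t=(6-\eta)\log_2 n$, and from a worst-case start $x$ let $A=B(x,r)$ with $r=(1-\tfrac{\eta}{12})\log_2 n$. Deterministically $|A|\le\tfrac{d}{d-2}(d-1)^r=3\cdot 2^{r}$, hence $\pi(A)\le 3\,n^{-\eta/12}=o(1)$. On the other hand, whp $G$ is such that the \SRW\ displacement from $x$ concentrates at speed $\tfrac{d-2}{d}=\tfrac13$~\cite{BD}, so uniformly over $m\le(3-\tfrac{\eta}{4})\log_2 n$ one has $\dist(X_m,x)\le(\tfrac13+o(1))m\le(1-\tfrac{\eta}{12}+o(1))\log_2 n<r$ for large $n$, whence $\P_x(X_m\in A)=1-o(1)$. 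Since $\E N_t=(3-\tfrac{\eta}{2})\log_2 n$ and the gap $\tfrac{\eta}{4}\log_2 n$ to the threshold $(3-\tfrac{\eta}{4})\log_2 n$ dwarfs $\sd(N_t)=\Theta(\sqrt{\log n})$, we have $\P(N_t\le(3-\tfrac{\eta}{4})\log_2 n)=1-o(1)$; combining, $\P_x(Y_t\in A)=1-o(1)$, so $d_{\mathrm{lazy}}(t)\ge\P_x(Y_t\in A)-\pi(A)=1-o(1)$, i.e.\ $\tmix^{\mathrm{lazy}}(\epsilon)\ge(6-\eta)\log_2 n$. Letting $\eta\downarrow0$ and combining with the upper bound gives $\tmix^{\mathrm{lazy}}(\epsilon)=(6+o(1))\log_2 n$ whp, which is Durrett's conjecture.

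\emph{Main obstacle.} There is no hard new ingredient here: all the substance is imported, from Theorem~\ref{thm-rw} for the upper bound and from the displacement concentration (\cite{BD}, or the lower-bound analysis of Theorem~\ref{thm-rw}) for the lower bound. The only point requiring care is bookkeeping --- keeping the $\Theta(\sqrt{\log n})$ fluctuations of $N_t$, and the uniformity of the displacement bound over the $O(\log n)$ relevant values of $m$, safely within the cutoff window, so that the location remains $6\log_2 n$. A self-contained argument would instead have to re-prove the \SRW\ displacement concentration, which is exactly the heart of the paper.
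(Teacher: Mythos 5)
Your argument is essentially the paper's own route: the paper deduces Durrett's conjecture directly from Theorem~\ref{thm-rw} (the lazy upper bound being ``readily inferred'' via exactly this $\Bin(t,\tfrac12)$-mixture plus monotonicity of the worst-case total-variation distance), together with the displacement concentration of \cite{BD}, which is also contained in the cover-tree/CLT analysis used for the lower bound of Theorem~\ref{thm-rw}. One small bookkeeping slip: with $r=(1-\tfrac{\eta}{12})\log_2 n$ and $m\le(3-\tfrac{\eta}{4})\log_2 n$ you only get $\dist(X_m,x)\le(1-\tfrac{\eta}{12}+o(1))\log_2 n$, which need not be $<r$; choose $r$ with extra slack, e.g.\ $r=(1-\tfrac{\eta}{24})\log_2 n$ (still giving $\pi(A)=o(1)$), and note that the uniformity over the $O(\log n)$ relevant values of $m$ requires the quantitative (Azuma-type) form of the displacement bound rather than the purely asymptotic statement of \cite{BD} --- both fixes are immediate from the paper's own martingale estimate.
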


Theorem \ref{thm-rw} stated above confirms these conjectures of Peres and Durrett (one can readily infer an upper bound on the mixing time of the lazy random walk from Theorem~\ref{thm-rw}).
Not only does this theorem establish cutoff and its location for the \SRW\ on $\G(n,d)$ (an analogous result immediately holds for the lazy walk), but it also determines the second order term in $\tmix(s)$ for any $0<s<1$ (the term corresponding to the cutoff window of order $\sqrt{\log n}$).

The \SRW\ on $\G(n,d)$ for $d=\lfloor(\log n)^a\rfloor$ and $a\geq 2$ fixed, starting from $v_1$ (not worst-case), was studied by Hildebrand \cite{Hildebrand}. He showed that
in this case there is cutoff \whp\ at $(1+o(1))\log_d n$,
and asked whether this also holds for $a < 2$. As we soon show, the answer to this question is positive, even from worst-case starting point and after replacing the $o(1)$ by an additive $2$. To describe this result, we must first discuss the \NBRW\ in further detail.

\begin{figure}[t]
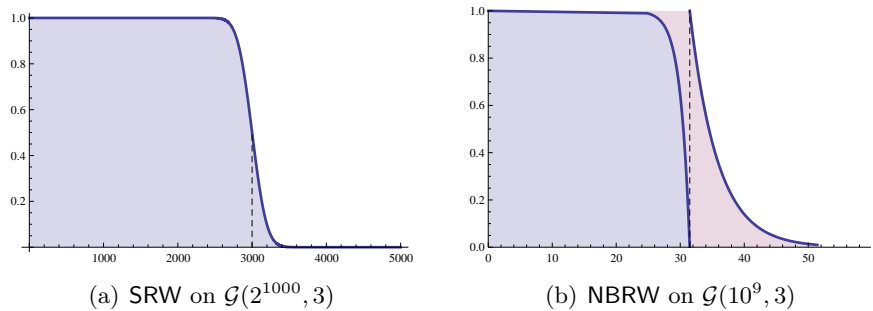

\centering
\subfigure[\SRW\ on $\mathcal{G}(2^{1000},3)$]
{
    \label{fig:cutoff-asymp:a}
    \includegraphics[width=2.1in]{dreg_rw_asymp}
}
\hspace{0.1in}
\subfigure[\NBRW\ on $\mathcal{G}(10^9,3)$]
{
    \label{fig:cutoff-asymp:b}
    \includegraphics[width=2.1in]{dreg_nbrw_asymp}
}
\caption{Estimates on the total-variation distance from stationarity for \SRW s and \NBRW s
on large $3$-regular graphs.
(a) Asymptotic behavior of $\tmix$ established by Theorem \ref{thm-rw}.
(b) Lower and upper bounds according to Theorem \ref{thm-nbrw}.}
\label{fig:cutoff-asymp}
\end{figure}

\subsection{Cutoff for the \SRW\ and \NBRW}

While the \SRW\ of a graph is a Markov chain on its vertices, the \NBRW\ has
the set of directed edges (i.e., each edge appears in both orientations) as its state space: it moves from an edge $(x,y)$ to a uniformly chosen edge $(y,z)$ with $z\neq x$.
However, in most applications for \NBRW s on regular graphs (see, e.g., \cite{AFH} and the references therein), one often considers the projection of this chain onto the currently visited vertex (i.e., $(x,y)\mapsto y$), as it also converges to the uniform distribution on the vertices, and can thus be compared to the \SRW.

In \cite{ABLS} the authors compare the \SRW\ and this projection of the \NBRW\ on regular expander graphs, showing that the \NBRW\ has a faster \emph{mixing rate} (see \cite{Lovasz}
for the definition of this spectral parameter, which for the \SRW\ coincides with the largest nontrivial eigenvalue in absolute value). However, it was not clear how this spectral data actually translates into a direct comparison of the corresponding mixing times.

Theorems \ref{thm-rw} and \ref{thm-nbrw}, as a bi-product, enable us to directly compare the mixing times of the \SRW\ and \NBRW\ (not only its projection onto the vertices). Namely, we obtain that the \NBRW\ indeed \emph{mixes faster} than the \SRW\ on almost every $d$-regular graph, by a factor of $d/(d-2)$. Surprisingly, the delicate result stated in Theorem \ref{thm-nbrw} also shows that once we omit the ``noise'' created by the backtracking possibility of the \SRW, we are able to pinpoint the cutoff location up to $O(1)$
(see \cite{Hildebrand2} for an example of such an $O(1)$ cutoff window related to random walks on the symmetric group).

Recalling that the cutoff window in Theorem \ref{thm-nbrw} had the form $\log_{d-1} (1/\epsilon)$, one may wonder what the effect of large degrees would be. Our results extend to the case of large $d$, all the way up to $d=n^{o(1)}$, beyond which the mixing time is constant (see, e.g., \cite{Dou}) hence there is no point in discussing cutoff. The cutoff window indeed \emph{vanishes} as $d\to\infty$, and the entire mixing transition occurs within merely two steps of the chain:

\begin{maintheorem}\label{thm-nbrw-large-d}
Let $G \sim \mathcal{G}(n,d)$ where $d=n^{o(1)}$ tends to $\infty$ with $n$. Then \whp, for any fixed $0 < s < 1$,
the worst case total-variation mixing time of the \emph{non-backtracking} random walk on $G$ \whp\ satisfies
\begin{align*}
\tmix(s) \in \left\{ \lceil \log_{d-1} (dn) \rceil, \lceil \log_{d-1} (dn) \rceil+1\right\} ~.
\end{align*}
That is, the \emph{\NBRW}\ on $G$ has cutoff \whp\ within two steps of the chain.
\end{maintheorem}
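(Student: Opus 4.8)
The plan is to establish matching lower and upper bounds on the total-variation distance of the \NBRW, both accurate to within a single step, by exploiting the fact that for large $d$ the ``local geometry'' around a directed edge becomes essentially a $(d-1)$-ary tree up to radius $\log_{d-1}(dn)-O(1)$, and by showing that the number of non-backtracking paths of length $\log_{d-1}(dn)+O(1)$ between a typical ordered pair of directed edges is concentrated and asymptotically Poisson with a large mean. For the lower bound, I would argue as in the proof of Theorem~\ref{thm-nbrw}: after $t=\lceil\log_{d-1}(dn)\rceil-1$ steps the \NBRW\ from a worst-case starting edge is supported on at most $(d-1)^t < dn/(d-1)$ directed edges, whereas the stationary distribution is uniform on all $dn$ directed edges, so $d_n(t) \geq 1 - (d-1)^{t}/(dn) = 1-o(1)$ since $(d-1)^{t} \le dn/(d-1) = o(dn)$. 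Pushing the support count one step further still leaves us at a point where the walk cannot yet have spread to a $1-\epsilon$ fraction of the mass, so $\tmix(s) \ge \lceil\log_{d-1}(dn)\rceil$ for any fixed $s<1$.

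For the upper bound I would show that at time $t=\lceil\log_{d-1}(dn)\rceil+1$ the walk is already mixed \whp. The key estimate is a high-degree analogue of Lemma~\ref{lem-k-roots-intersections}: for most ordered pairs of directed edges $(\dx,\dy)$, the number $N_t(\dx,\dy)$ of length-$t$ non-backtracking paths from $\dx$ to $\dy$ satisfies $N_t(\dx,\dy) = (1+o(1))(d-1)^{t}/(dn)$, which for $t$ as above is $(1+o(1))(d-1) \gg 1$. Since the \NBRW\ after $t$ steps puts mass $N_t(\dx,\dy)(d-1)^{-t}$ on $\dy$, concentration of $N_t$ around its mean $(dn)^{-1}(d-1)^{t}$ (i.e., around the stationary mass times $(d-1)^{t}$) forces $\|\P_{\dx}(X_t\in\cdot)-\pi\|_{\mathrm{TV}} = o(1)$. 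Establishing this concentration is where the Poissonization argument enters: one tracks the growth of the non-backtracking ball $B_t(\dx)$ and shows that when two such balls of combined radius exceeding $\log_{d-1}(dn)$ are ``collided'', the number of edges between them is close to Poisson with the stated mean, uniformly over most pairs; a union bound over the $O((dn)^2)$ pairs then requires the error probability in the Poisson approximation to be $o((dn)^{-2})$, which is affordable because $d\to\infty$ gives us a large Poisson mean and hence exponentially small tail bounds.

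The main obstacle, as in Theorems~\ref{thm-rw} and~\ref{thm-nbrw}, is controlling the small set of ``bad'' pairs of vertices or directed edges whose local neighborhoods are not tree-like up to the required radius — for instance pairs lying on a short cycle, or in a part of the graph where the $(d-1)$-ary branching fails because of a collision within a single ball. Here the regime $d=n^{o(1)}$ cuts both ways: the radius $\log_{d-1}(dn) = (1+o(1))\log n/\log d$ needed is only a bounded number of steps when $d$ is a power of $n$, which keeps the combinatorics of paths manageable, but the neighborhoods grow so fast that a ball of radius $\log_{d-1}(dn)-O(1)$ already contains almost all of the $dn$ directed edges, so one must be careful that the tree-like approximation survives right up to the scale where balls start to saturate the graph. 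I would handle this by the same two-phase exposure as in the fixed-$d$ proof: expose the graph around $\dx$ and $\dy$ one layer at a time using the configuration model, show the branching is $(d-1)$-regular \whp\ until the combined ball sizes reach a $\Theta((dn)^{1/2})$ threshold (below saturation), and then invoke the Poisson approximation for the number of connecting edges in the final exposure step. The window collapsing to two integers then falls out of the fact that $(d-1)^{\{\text{fractional part of }\log_{d-1}(dn)\}}$ differs from $1$ by at most a factor $d-1$, so one extra step multiplies the typical path count by a factor $\gg 1$ and is enough to go from ``not yet mixed'' to ``mixed,'' with no room for intermediate behaviour as $d\to\infty$.
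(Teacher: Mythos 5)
Your overall skeleton (support counting for the lower bound; exposing two boundary neighborhoods and Poisson-approximating the number of connecting edges for the upper bound) is the same as the paper's, but the mechanism you propose for the upper bound has a genuine gap. You want per-pair concentration of $N_T(\dx,\dy)$ around $\mu=(d-1)^T/(dn)$ with failure probability $o((dn)^{-2})$, arguing this is affordable because the Poisson mean is large. But at $T=\lceil\log_{d-1}(dn)\rceil+1$ the mean satisfies only $d-1\le\mu<(d-1)^2$, so the best per-pair tail bound for a constant relative deviation is $\exp(-\Theta(\mu))=\exp(-O(d^2))$. This is nowhere near $o(n^{-2})$ unless $d\gtrsim\sqrt{\log n}$, while the theorem covers every $d\to\infty$ with $d=n^{o(1)}$ (e.g.\ $d=\log\log n$), so your union bound over the $\Theta((dn)^2)$ pairs collapses on most of the stated range. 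Worse, in this regime one cannot even settle for $o(n^{-2})$: since $\P(\Simple)=(1+o(1))\exp(\frac{1-d^2}{4})$ now vanishes, every estimate made in the configuration model must fail with probability $o(\exp(-d^2/4))$ before it transfers to $\G(n,d)$ --- a point your proposal never addresses and which an $\exp(-\Theta(\mu))$ bound does not deliver. The paper circumvents per-pair concentration entirely: it bounds the \emph{average} error $\E\big[\,|Z(\dx)/\mu-1|\,\big|\,\cF_G\big]$ over the target edge $\dy$, by sampling a random batch of $M=d^3\log^2 n$ targets, sandwiching each count between i.i.d.\ binomials, applying Chernoff/Azuma to the batch sums (failure probability roughly $\exp(-\Omega(M/(\log\log n)^2))$, which beats both $n^{-2}$ and $\exp(d^2/4)$), and then averaging over a partition of $\vE$ into such bins (Proposition \ref{prop-poisson-large-d}, adapted from Proposition \ref{prop-poisson}). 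Without this averaging device, or a substitute, your step ``concentration of $N_t$ forces $\mathrm{TV}=o(1)$'' is unsupported.

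A secondary point: your lower-bound arithmetic is reversed. For $t=\lceil\log_{d-1}(dn)\rceil-1$ one has $(d-1)^t\ge dn/(d-1)$, not $(d-1)^t<dn/(d-1)$; the support bound only gives $d(t)\ge 1-(d-1)^{t}/(dn)=1-(d-1)^{-\theta}$ where $\theta$ is the gap between $t$ and $\log_{d-1}(dn)$, so the conclusion ``$=1-o(1)$'' does not follow as written and needs care when the fractional part of $\log_{d-1}(dn)$ is small. The paper packages this via Claim \ref{cl:mixingLowerBound} (stated with the $\lceil\log_{d-1}(1/\epsilon)\rceil$ correction, which is $o(1)$ in the exponent since $d\to\infty$), and you should either invoke that form or make the dependence on the fractional part explicit. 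Your final remark on the two-step window (one extra step multiplies the path count by at least $d-1\to\infty$) is the right intuition and matches why the paper's bound $2\epsilon+o(1)$ becomes $o(1)$ here.
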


As a corollary, the relation between \NBRW s and \SRW s directly implies an analogous statement for the \SRW\ on regular graphs of large degree. Here, the cutoff window
becomes $\sqrt{(1/d)\log_d n}$ (compared to $\sqrt{\log n}$ for $d$ fixed),
and if $\frac{\log n}{\log\log n}=o(d)$ then the walk completely coincides with the \NBRW.
\begin{maincoro}\label{cor-rw-large-d}
Let $G \sim \mathcal{G}(n,d)$ where $d=n^{o(1)}$ tends to $\infty$ with $n$. Then \whp,
the \emph{\SRW}\ on $G$ has cutoff at $\frac{d}{d-2}\log_{d-1}n$ with a window of
$\sqrt{\frac{\log n}{d\log d}}$. Furthermore, if $\frac{d\log\log n}{\log n}\to\infty$, then
for any fixed $0 < s < 1$,
the worst case total-variation mixing time of the \emph{\SRW} on $G$ \whp\ satisfies
\begin{align*}
\tmix(s) \in \left\{ \lceil \log_{d-1} (dn) \rceil, \lceil \log_{d-1} (dn) \rceil+1\right\} ~.
\end{align*}
\end{maincoro}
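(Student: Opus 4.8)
The plan is to obtain Corollary~\ref{cor-rw-large-d} from Theorem~\ref{thm-nbrw-large-d} via the coupling linking the \SRW\ to the \NBRW, exactly as Theorem~\ref{thm-rw} follows from Theorem~\ref{thm-nbrw}, only now carried out uniformly as $d\to\infty$. Run the \SRW\ $(X_t)$ from an arbitrary vertex $v$ of a simple $d$-regular graph $G$, let $(B_t)$ be i.i.d.\ $\Bin(1,1/d)$ variables independent of the direction choices of the walk, and call step $t$ a \emph{backtrack} if $B_t=1$ and the current backtrack-erased path has positive length. Let $N_t$ be the length of that erased path after $t$ steps. Then $(N_t)$ is the nearest-neighbour walk on $\Z_{\ge0}$ that steps down with probability $1/d$ and up with probability $\tfrac{d-1}d$ from any positive level, and steps up from $0$; and, conditionally on the whole trajectory $(N_s)_{s\le t}$, the endpoint $X_t$ is distributed as the $N_t$-th vertex of a \NBRW\ on $G$ started from $v$ (i.e.\ from a uniformly random out-edge of $v$). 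Indeed, each non-backtrack step extends the erased path to a fresh uniform non-backtracking neighbour — precisely the \NBRW\ rule — and the $B_t$'s are independent of these choices, so no local tree-likeness is needed; the reduction is exact. Writing $(V_m)$ for the vertex projection of the \NBRW\ and using that projections do not increase total variation, we get $d^{\SRW}_n(t)\le\sum_{m\ge0}\P(N_t=m)\,d^{\NBRW}_n(m)$.

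For the main statement I would combine this with Theorem~\ref{thm-nbrw-large-d} and a Gaussian analysis of $(N_t)$. Put $m^{\ast}:=\log_{d-1}(dn)=(1+o(1))\log_{d-1}n$. By the strong Markov property the first-passage time $\tau_m:=\min\{t:N_t=m\}$ is, up to an $O(1)$ correction near $0$, a sum of $m$ i.i.d.\ copies of a one-level ascent time $\xi$ with $\E\xi=\tfrac{d}{d-2}$ and $\var(\xi)=\tfrac{4d(d-1)}{(d-2)^3}=:\Lambda^2$; a triangular-array renewal CLT then gives $\tau_m=\tfrac{d}{d-2}m+\Lambda\sqrt m\,(\Xi_m+o(1))$ with $\Xi_m$ asymptotically standard normal, and dually $N_t$ obeys a central limit theorem on scale $\sqrt{t/d}$. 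Since by Theorem~\ref{thm-nbrw-large-d} one has $d^{\NBRW}_n(m)=o(1)$ for all $m\ge\lceil m^{\ast}\rceil+1$ \whp\ (and $d^{\NBRW}_n$ is non-increasing), the displayed bound yields $d^{\SRW}_n(t)\le\P(N_t\le\lceil m^{\ast}\rceil)+o(1)$, which together with the CLT for $N_t$ gives, for fixed $0<s<1$ and $t=\tfrac{d}{d-2}\log_{d-1}n-(\Lambda+o(1))\Phi^{-1}(s)\sqrt{\log_{d-1}n}$, that $d^{\SRW}_n(t)\le s+o(1)$. The matching lower bound is softer: $X_t$ lies within graph-distance $N_t$ of $v$, any $d$-regular graph has $|B_r(v)|\le\tfrac{d}{d-2}(d-1)^r+1$, so for $m$ with $(d-1)^m=o(n)$ one has $d^{\SRW}_n(t)\ge\P(N_t\le m)-o(1)$, and the same CLT pushes this to $s-o(1)$ at the same $t$. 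Hence $d^{\SRW}_n(t)\to s$, which is cutoff at $\tfrac{d}{d-2}\log_{d-1}n$ with window $\Lambda\sqrt{\log_{d-1}n}\asymp\sqrt{\log n/(d\log d)}$, using $\Lambda\sim 2/\sqrt d$ and $\log_{d-1}n\sim\log n/\log d$.

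For the ``furthermore'' clause, suppose $\tfrac{d\log\log n}{\log n}\to\infty$, so that (using $d=n^{o(1)}$ and $\log d\ge(1-o(1))\log\log n$) one has $\log n=o(d\log d)$ and hence $T:=\lceil m^{\ast}\rceil+1=o(d)$. Then the \SRW\ performs no backtrack among its first $T$ steps with probability $(1-\tfrac1d)^{T-1}=1-o(1)$, and on that event $X_0,\dots,X_T$ is literally a trajectory of the \NBRW; transferring Theorem~\ref{thm-nbrw-large-d} with an $o(1)$ coupling error — its upper bound through the projection inequality, its lower bound because that bound already witnesses the vertex-projected \NBRW\ to be unmixed at step $\lceil m^{\ast}\rceil-1$ from every start — gives $\tmix^{\SRW}(s)\in\{\lceil\log_{d-1}(dn)\rceil,\lceil\log_{d-1}(dn)\rceil+1\}$ for each fixed $0<s<1$; this refines the first part, whose window $\sqrt{\log n/(d\log d)}$ is $O(1)$ in this regime.

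I expect the main obstacle to be the renewal analysis of $(N_t)$ carried out uniformly in $d\to\infty$: since the one-step law of $N$ depends on $d$ this is a triangular-array problem, requiring a Lyapunov condition and the exact computation of $\var(\xi)$ to pin the window constant $\Lambda=\tfrac{2\sqrt{d(d-1)}}{(d-2)^{3/2}}$, and this probabilistic input must then be fused with Theorem~\ref{thm-nbrw-large-d} while every $o(1)$ term — the discrepancy between $\{N_t\le m\}$-type events and first-passage events (controlled by $\tfrac1{d-1}\to 0$), the Gaussian approximation error for $\tau_{m^{\ast}}$, and the contribution of $m$ far from $m^{\ast}$ — is kept below the $\sqrt{\log_{d-1}n}$ scale of the window.
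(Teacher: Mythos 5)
Your proposal is correct and follows essentially the same route as the paper: your backtrack-erasure representation of the \SRW\ (the level process $N_t$ together with the fact that, conditionally on $N_t=m$, the endpoint is the vertex projection of an $m$-step \NBRW) is exactly the cover-tree duality of Observation~\ref{obs:Yuval}, and the remaining steps --- transferring Theorem~\ref{thm-nbrw-large-d} through this decomposition for the upper bound, the ball-measure counting lower bound, the fluctuation analysis of the distance process giving location $\frac{d}{d-2}\log_{d-1}n$ and window $\sqrt{\log n/(d\log d)}$, and the no-backtracking coupling when $\frac{d\log\log n}{\log n}\to\infty$ --- are precisely those of the paper's proof. The only stylistic difference is that you analyze $N_t$ via a renewal CLT for first-passage times (flagging the triangular-array issue yourself), whereas the paper argues directly from \eqref{e:clt} and notes the Poisson behaviour of the backtrack count when $t=O(d)$; both yield the same window.
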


In particular, this answers the above question of Hildebrand (the case of $d =\lfloor (\log n)^a\rfloor$ for any $a > 0$ fixed) in the
affirmative, even from a worst starting position. Furthermore, instead of a multiplicative $1+o(1)$, the cutoff point is determined up to an additive 2 if $a\geq 1$.

\subsection{Random walks on the hypercube}
As mentioned above, one of the original examples of cutoff was for the lazy random walk on the hypercube $Q_m$,
which was shown by Aldous \cite{Aldous} to exhibit cutoff at $\frac12 m\log m$.
When compared to the \SRW\ on $\G(2^m,m)$, guaranteed by Corollary~\ref{cor-rw-large-d} to have cutoff \whp\ at $(\log 2+o(1)) m/\log m$
(in this setting, $d=\log_2 n$ has $\frac{d \log\log n}{\log n}\to\infty$), this demonstrates the slower than typical mixing of the hypercube.


\subsection{Organization}
The rest of the paper is organized as follows. Section~\ref{sec:prelim}
contains several preliminary facts on random regular graphs. In Sections~\ref{sec:rw} and \ref{sec:nbrw} we prove the main theorems, Theorems~\ref{thm-rw} and \ref{thm-nbrw}
resp., and in Section~\ref{sec:large-d} we extend these proofs to the case of $d$ large.

\section{Preliminaries}\label{sec:prelim}

Let $G=(V,E)$, and let $\vE$ denote the set of directed edges (i.e., $\vE$ contains both orientations of every edge in $E$). Throughout the paper, we will use $x,y,\ldots$ for vertices in $V$, as opposed to $\dx,\dy,\ldots$ for directed edges in $\vE$.

\subsection{The configuration model} This model, introduced by Bollob\'as \cite{Bollobas1} and sometimes also referred to as the \emph{pairing model}, provides
a convenient method of both constructing and analyzing a random regular graph. We next briefly review some of the properties of this model which we will need for our arguments (see \cite{Bollobas2},\cite{JLR} and \cite{Wormald}*{Section 2} for further information).

Given $d$ and $n$ with $dn$ even, a $d$-regular (multi-)graph on $n$ vertices is constructed via the configuration model as follows. Each vertex is identified with $d$ distinct points, and a random perfect matching of all these $d n$ points is then produced. The resulting multi-graph is obtained by collapsing every $d$-tuple into its corresponding vertex (possibly introducing loops or multiple edges). Let \Simple\ denote the event that the outcome is a simple graph.

It can easily be verified that, on the event \Simple, the resulting graph is uniformly distributed over $\G(n,d)$. Crucially, for any fixed $d$,
\begin{equation}\label{eq-p-simple}
  \P(\Simple) = (1+o(1))\exp\Big(\frac{1-d^2}4\Big)~,
\end{equation}
where the $o(1)$-term tends to $0$ as $n\to\infty$. In particular, as this probability is uniformly bounded away from $0$, any event that holds \whp\ for multi-graphs constructed via the configuration model, also holds \whp\ for $\G(n,d)$.

In fact, the statement in equation \eqref{eq-p-simple} was extended to any $d=o(n^{1/3})$ by McKay \cite{McKay}. Although the asymptotical behavior of this probability was thereafter determined for even larger values of $d$ (see \cite{Wormald} for additional information), in this work we are only concerned with the case $d=n^{o(1)}$, and hence this result will suffice for our purposes.

A highly useful property of the configuration model is the following: we can expose
the ``pairings'' sequentially, that is, given a vertex, we reveal the $d$ neighbors of its corresponding points one by one, and so on. This allows us to ``explore our way'' into the graph, while constantly maintaining the uniform distribution over the pairings of the remaining unmatched points.

\subsection{Neighborhoods and tree excess}
We need the following definitions with respect to a given graph $G=(V,E)$.
Let $\dist(u,v)=\dist_G(u,v)$ denote the distance between two vertices $u,v\in V$ in this graph.
 For any vertex $u\in V$ and integer $t$, the \emph{$t$-radius neighborhood} of $u$, denoted by $B_t(u)$, and its (vertex) boundary $\partial B_t(u)$, are defined as
 \begin{equation}
   \label{eq-t-radius-neighborhood}
   B_t(u) \deq \{ v\in V: \dist(u,v) \leq t\}~,~\partial B_t(u) \deq  B_t(u) \setminus B_{t-1}(u)~.
 \end{equation}
The abbreviated form $B_t$ will be used whenever the identity of $u$ becomes clear from the context. The \emph{tree excess} of $B_t$, denoted by $\tx(B_t)$, is the maximum number of edges that can be deleted from the induced subgraph on $B_t$ while keeping it connected (i.e., the number of extra edges in that induced subgraph beyond $|B_t|-1$).

The next lemma demonstrates the well known locally-tree-like properties of a typical $G\sim\G(n,d)$ for any fixed $d\geq3$. Its proof follows from a standard and straightforward application of the above mentioned ``exploration process'' for the configuration model.
\begin{lemma}\label{lem-leq-one-bad-edge}
Let $G\sim\G(n,d)$ for some fixed $d\geq3$, and let $t = \lfloor \frac{1}{5}\log_{d-1}n \rfloor$. Then \whp, $\tx(B_t(u)) \leq 1$ for all $u\in V(G)$.
\end{lemma}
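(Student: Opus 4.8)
The plan is to prove the statement in the configuration model and then transfer it to $\G(n,d)$: since $\P(\Simple)$ is bounded away from $0$ by \eqref{eq-p-simple}, it suffices to show that for a $d$-regular multigraph $G$ drawn from the configuration model we have, \whp, $\tx(B_t(u))\le 1$ for every $u\in V$. Fix $u$ and reveal the closed ball $B_t(u)$ by a breadth-first exploration, using the sequential-revelation property of the model: maintain a queue of discovered vertices labelled by their distance from $u$, start from $u$, and repeatedly pick the next discovered vertex $v$ with $\dist(u,v)\le t$ and reveal, one by one, the partners of all of $v$'s still-unmatched points. Declare a revelation a \emph{collision} if the partner point belongs to an already-discovered vertex (this captures loops and parallel edges as well). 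The points revealed in the course of first reaching the vertices of $B_t(u)$ form a spanning tree of the induced subgraph on $B_t(u)$, so every other internal edge is witnessed by a collision; hence $\tx(B_t(u))$ is at most the total number of collisions in the exploration of $B_t(u)$. (It is crucial here to process the boundary vertices $\partial B_t(u)$ as well, so that edges internal to $\partial B_t(u)$ are not missed.)

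Two deterministic facts drive the estimate. First, in any $d$-regular multigraph $|B_t(u)|\le 1+d\sum_{i=0}^{t-1}(d-1)^i$, so the number of points touched by the exploration — and thus the number of matchings performed — is at most $C_d(d-1)^t\le C_d\, n^{1/5}$ for a constant $C_d$ depending only on $d$. Second, at each matching step the partner is uniform over the currently unmatched points, of which there are at least $dn-C_d n^{1/5}\ge dn/2$ for $n$ large, while the number of points belonging to already-discovered vertices is at most $C_d n^{1/5}$; hence, conditionally on the entire past, each step is a collision with probability at most $C_d' n^{-4/5}$. Therefore, for any two prescribed steps $i<j$ of the exploration the probability of a collision at both is at most $(C_d' n^{-4/5})^2$, and summing over the at most $\binom{C_d n^{1/5}}{2}$ pairs,
$$\P\big(\tx(B_t(u))\ge 2\big)\ \le\ \binom{C_d\,n^{1/5}}{2}\big(C_d'\,n^{-4/5}\big)^2\ =\ O\big(n^{-6/5}\big).$$
A union bound over the $n$ vertices gives $\P\big(\exists\, u:\tx(B_t(u))\ge 2\big)=O(n^{-1/5})\to 0$, and transferring to $\G(n,d)$ via \eqref{eq-p-simple} finishes the proof.

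The argument is essentially a first-moment computation, so I do not expect a genuine obstacle; the only points that need care are (i) the bookkeeping that bounds $\tx(B_t(u))$ by the collision count, which forces one to explore the full closed ball rather than just $B_{t-1}(u)$, and (ii) noting that the per-step collision probabilities are controlled conditionally on the past, so that no independence is needed for the two-collision union bound over exploration steps. I would also remark that the constant $\tfrac15$ is not tight: any $c<\tfrac14$ in $t=\lfloor c\log_{d-1}n\rfloor$ works, since the fixed-$u$ failure probability scales like $n^{4c-2}$ while the union bound over vertices costs a factor $n$.
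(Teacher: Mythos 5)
Your proposal is correct and follows essentially the same route as the paper: expose the neighborhood of $u$ sequentially via the configuration model, bound the conditional probability of each ``collision'' by $O(n^{-4/5})$, bound $\P(\tx(B_t(u))\ge 2)$ by summing over the $O(n^{2/5})$ pairs of exploration steps (the paper packages this as stochastic domination by a binomial variable), and finish with a union bound over vertices together with the transfer to $\G(n,d)$ via \eqref{eq-p-simple}. Your explicit care in exploring the full closed ball so that edges inside $\partial B_t(u)$ are counted is a sound (indeed slightly more careful) piece of bookkeeping; otherwise the two arguments coincide.
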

\begin{proof}
Choose $u \in V$ uniformly at random, and consider the process where the neighborhood of $u$ is sequentially exposed level by level, according to the configuration model. When pairing the vertices of level $i$ (and establishing level $i+1$) for some $i \geq 0$, we are matching
$$m_i\leq d \;\vee\;(d-1)|\partial B_i| $$
points among a pool of $(1-o(1))dn$ yet unpaired points. For $1\leq k \leq m_i$, let $\mathcal{F}_{i,k}$ denote the $\sigma$-field generated by the process of sequentially exposing pairings up to the $k$-th unmatched point in $\partial B_i$. Further let $A_{i,k}$ denote the event that the newly exposed pair of the $k$-th unmatched point in $\partial B_i$ already belongs to some vertex in $B_{i+1}$. Clearly,
\begin{equation}  \label{eq-srw-bad-edge-i-k}
\P\left(A_{i,k} \mid \mathcal{F}_{i,k}\right) \leq \frac{(m_i-k) + (d-1)(k-1)}{(1-o(1))d n} \leq \frac{(d-1)m_i}{(1-o(1))dn} \leq\frac{m_i}{n}
\end{equation}
(where the last inequality holds for a sufficiently large $n$), and hence the number of events $\{A_{i,k}: 1\leq k \leq m_i\}$
that occur is stochastically dominated by a binomial random variable with parameters $\Bin(m_i,m_i/n)$.
(We say that $\mu$ stochastically dominates $\nu$, denoted by $\mu\succeq\nu$, if $\int f d\mu \geq \int f d\nu$
for every bounded increasing function $f$.)
Moreover, since $m_i \leq d (d-1)^i$ for any $0 \leq i \leq t$, it follows that $\sum_{i=0}^{t-1} m_i \leq d (d-1)^t$, and the
number of occurrences in the entire set of events  $\{A_{i,k} : i < t\}$
can be stochastically dominated as follows:
\begin{equation}
  \label{eq-aik-stoch-dom}
  \sum_{i=0}^{t-1}\sum_{k=1}^{m_i} \one_{A_{i,k}} \preceq \Bin\left( d(d-1)^t, \frac{d(d-1)^{t-1}}{n}\right)~.
\end{equation}
 Notice that, by definition, the number of such events
  that occur is exactly the tree excess of $B_{t}(u)$. We thus obtain that
\begin{align*}  
\P(\tx(B_t) \geq 2) &\leq O\left( \binom{d(d-1)^t}{2} \frac{d^2(d-1)^{2(t-1)}}{n^2} \right) = O\left(n^{-6/5}\right) ~,
\end{align*}
where the last equality is by the assumption on $t$. Taking a union bound over all vertices $u\in V$ completes the proof.
\end{proof}
When proving cutoff for the \NBRW\ in Section \ref{sec:nbrw}, we will be dealing with directed edges rather than vertices. The $t$-radius neighborhood of a directed edge $\dx$, denoted by $B_t(\dx)$, and its boundary $\partial B_t(\dx)$, then consist of directed edges, and are defined analogously to \eqref{eq-t-radius-neighborhood} (with $\dist(\dx,\dy)$ measuring the shortest non-backtracking walk from $\dx$ to $\dy$; note that $\dist(\cdot,\cdot)$ is not necessarily symmetric).
The tree excess $\tx(B_t(\dx))$ in this case will refer to the undirected underlying graph induced on $B_t(\dx)$.

\subsection{The cover tree of a regular graph}
Let $G=(V,E)$ be a $d$-regular graph and $u\in V$ be some given vertex in $G$. The \emph{cover tree of $G$ at $u$} is a mapping $\phi:\cT\to V$, where $\cT$ is a $d$-regular tree with root $\rho$, and the following holds:
\begin{equation}
  \label{eq-cover-tree}
  \left\{\begin{array}
  {l}\phi(\rho) = u~,\\
  N_G(\phi(x)) = \{\phi(y): y\in N_\cT(x)\}~\mbox{for any $x \in \cT$}~,
\end{array}\right.
\end{equation}
where $N_H(u)=\{v\in V(H):\dist_H(u,v)=1\}$ (i.e., $\partial B_1(v)$ for the graph $H$). That is, the root of $\cT$ is mapped to $u$, and $\phi$ respects $1$-radius neighborhoods.

The following two simple facts will be useful later on. First,
there is a one-to-one correspondence between non-backtracking paths in $G$ starting from $u$ and non-backtracking paths in $\cT$ starting from $\rho$. Second, if $X_t$ is a simple random walk on $\cT$, then $\phi(X_t)$ is a simple random walk on $G$.

\section{Cutoff for the simple random walk}\label{sec:rw}

In this section, we prove Theorem \ref{thm-rw}, which establishes cutoff for the \SRW\ on a typical random $d$-regular graph for any fixed $d\geq3$. Throughout this section, let $d\geq 3$ be some fixed integer, and consider some $G\sim\G(n,d)$.

We need the following definition concerning the locally tree-like geometry. 

\begin{definition}[$K$-root]\label{def-K-root}
We say that a vertex  $u\in V$ is a \textbf{$K$-root} if and only if the induced subgraph on $B_K(u)$ is a tree, that is, $\tx(B_K(u))=0$.
\end{definition}
Recalling Lemma \ref{lem-leq-one-bad-edge}, \whp\ every vertex in our graph $G\sim\G(n,d)$
has a tree excess of at most $1$ in its $\lfloor\frac15\log_{d-1}n\rfloor$-radius neighborhood. The next simple lemma shows that in such a graph (in fact, a weaker assumption suffices), a ``burn-in'' period of $\Theta(\log\log n)$ steps allows the \SRW\ from the worst-case starting position to reposition itself in a typically ``nice'' vertex.

\begin{lemma}\label{lem-srw-to-good-roots}
Let $K = \lfloor\log_{d-1}\log n\rfloor$, and suppose that every $u\in V$ has $\tx(B_{5K}(u))\leq 1$. Then for any $u \in V$, the \emph{\SRW}\ of length $4K$ from $(u,v)$ ends at a $K$-root with probability $1-o(1)$. In particular, there are $n-o(n)$ vertices in $G$ that are $K$-roots.
\end{lemma}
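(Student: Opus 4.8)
The plan is to exploit the hypothesis that each ball $B_{5K}(u)$ is a tree plus at most one extra edge, and to show that the \SRW\ quickly escapes from the unique short cycle (if one exists), ending deep inside a genuinely tree-like neighbourhood. Fix any starting vertex $u$ and set $H\deq B_{5K}(u)$; by assumption $\tx(H)\le1$, so $H$ is either a tree or a tree with one additional edge, and in the latter case let $C$ be its unique cycle. If $H$ is a tree there is nothing to prove: every $w$ with $\dist(u,w)\le4K$ has $B_K(w)\subseteq H$, hence $B_K(w)$ is a forest and $w$ is a $K$-root. So assume $C$ exists. The key observation is that for $w=X_{4K}$ we have $\dist(u,w)\le4K$, hence $B_K(w)\subseteq B_{5K}(u)=H$, so the only cycle that can sit inside $B_K(w)$ is $C$; consequently, if $w$ is not a $K$-root then $C\subseteq B_K(w)$, and in particular $\dist(w,C)\le K$ with a realizing geodesic contained in $B_K(w)\subseteq H$. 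Thus it suffices to bound $\P\big[\dist_H(X_{4K},C)\le K\big]$.

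Next I would track the process $D_t\deq\dist_H(X_t,C)$. Since $X_t\in B_{4K}(u)$ for all $t\le 4K$, every vertex visited by the walk, together with all of its neighbours, lies strictly inside $H$; and in a tree-plus-one-edge graph every vertex off $C$ has exactly one neighbour closer to $C$ (a second one would force a second independent cycle, contradicting $\tx(H)\le1$), while every vertex on $C$ has exactly two neighbours on $C$. Hence $(D_t)$ is a birth-and-death chain on $\{0,1,2,\dots\}$: from $0$ it moves to $1$ with probability $(d-2)/d$ and stays otherwise, and from any $j\ge1$ it moves to $j+1$ with probability $(d-1)/d$ and to $j-1$ with probability $1/d$. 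In particular each increment has constant conditional mean $(d-2)/d$, so $M_t\deq D_t-\tfrac{d-2}{d}t$ is a martingale with increments bounded by $2$.

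Applying Azuma's inequality, and using that $D_{4K}\le K$ forces $M_{4K}\le K-\tfrac{4(d-2)}{d}K\le-\tfrac{K}{3}$ for every $d\ge3$ while $M_0=D_0\ge0$, one gets
\[
\P\big[D_{4K}\le K\big]\ \le\ \P\big[M_{4K}-M_0\le -\tfrac{K}{3}\big]\ \le\ \exp\!\big(-K/288\big).
\]
Since $K=\lfloor\log_{d-1}\log n\rfloor\to\infty$, this is $o(1)$, which combined with the first paragraph proves that the \SRW\ of length $4K$ from any vertex ends at a $K$-root with probability $1-o(1)$. For the ``in particular'' statement, let $\epsilon_n\to0$ bound this failure probability uniformly over starting vertices, so that $\sum_{u\in V}\sum_{w\ K\text{-root}}P^{4K}(u,w)\ge n(1-\epsilon_n)$; as $G$ is $d$-regular the kernel $P$ is symmetric, hence so is $P^{4K}$ and $\sum_{u\in V}P^{4K}(u,w)=1$ for every $w$, so the left-hand side equals the number of $K$-roots, giving $n-o(n)$ of them.

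The main obstacle is the bookkeeping needed to justify that $(D_t)$ is genuinely a birth-and-death chain: one must check that distances to $C$ are not distorted by paths leaving $B_{5K}(u)$, that the walk never reaches the boundary of $H$ within the first $4K$ steps, and that ``$X_{4K}$ not a $K$-root'' really does imply $\dist_H(X_{4K},C)\le K$ (via a geodesic that stays inside $B_K(X_{4K})$). Once the local structure is pinned down, the remaining concentration estimate and the reversibility argument for the count are routine.
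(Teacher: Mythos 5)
Your proposal is correct and follows essentially the same route as the paper's proof: in the unicyclic case one tracks the distance from the walk to the unique cycle, which has conditional drift $(d-2)/d$ in every state, and applies the Azuma--Hoeffding inequality to conclude that $X_{4K}$ is at distance greater than $K$ from the cycle (hence a $K$-root) with probability $1-o(1)$, while the count of $K$-roots follows from the fact that the uniform distribution is stationary (your doubly-stochastic formulation is the same observation). Your added bookkeeping—that the walk and its neighbours stay inside $B_{5K}(u)$, that failure to be a $K$-root forces the whole cycle into $B_K(X_{4K})$, and bounding $\P[D_{4K}\le K]$ directly—only makes the paper's slightly informal steps more precise.
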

\begin{proof}
If $\tx(B_{5K}(u))= 0$ then the induced subgraph on $B_{5K}$ is a tree and the result is immediate.

If $\tx(B_{5K}(u))= 1$ then the induced subgraph on $B_{5K}$ is cycle $C$, with disjoint trees rooted on each of its vertices. Let
$X_t$ denote the position of the random walk at time $t$, and let $\rho_t=\dist(X_t,C)$, that is, the length of the shortest path between $C$ and $X_t$ in $G$.

 If the random walk is on the cycle then in the next step it either leaves $C$ with probability $\frac{d-2}{d}$, or remains on $C$ with probability $\frac2{d}$. Alternatively, if the random walk is not on $C$, then it moves one step closer to $C$ with probability $\frac1{d}$ and  one step further away with probability $\frac{d-1}{d}$.  Either way,
 $$\E [ \rho_{t+1}-\rho_t \mid X_t] = \frac{d-2}{d}~.$$
 Therefore, $\rho_t-\frac{(d-2)t}{d}$ is a martingale, and the Azuma-Hoeffding inequality (cf., e.g., \cite{AS}) ensures that
$$
\P\left(\left|\rho_{4K}-\rho_0-\frac{4K(d-2)}{d}\right|>\frac{K}{3}\right) \leq \exp\left(\frac{-K}{72\left(1+\frac{d-2}{d}\right)^2} \right)=o(1)~.
$$
We deduce that, \whp, $\rho_{4K}\geq \frac{4K(d-2)}{d} - \frac{K}{3} \geq K$ and hence $X_{4K}$ is a $K$-root.

To obtain the statement on the number of $K$-roots in $G$, suppose we start from a uniformly chosen vertex.  Clearly, the random walk at time $4K$ is also uniform, thus the probability that a uniformly chosen vertex is not a $K$-root is $o(1)$, as required.
\end{proof}
The following lemma demonstrates the control over the local geometry around a $K$-root with $K=\Theta(\log\log n)$.

\begin{lemma}\label{lem-u-boundary-sizes}
Set $R=\lfloor \frac{4}{7}\log_{d-1}n\rfloor$ and $K = \lfloor\log_{d-1}\log n\rfloor$. With high probability,
every $K$-root $u$ satisfies $$|\partial B_t(u)| \geq (1-o(1)) d(d-1)^{t-1} \mbox{ for all $t < R$}~.$$
\end{lemma}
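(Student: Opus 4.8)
The plan is to fix a vertex $u$, condition on $u$ being a $K$-root, and continue the level-by-level exploration of the configuration model around $u$ as in the proof of Lemma~\ref{lem-leq-one-bad-edge}. Conditioning on $u$ being a $K$-root only means that the first $K$ levels of the exploration reveal a tree, and by the sequential-exposure property the remaining unmatched points are still matched uniformly, so we may keep exposing levels $K,K+1,\dots,R-1$. For $K\le i<R$ let $D_i$ be the number of pairings exposed while establishing level $i{+}1$ whose partner point already belongs to $B_{i+1}(u)$ --- a ``defect'' --- so that $\sum_{i=K}^{t-1}D_i=\tx(B_t(u))$. Since $R<\log_{d-1}n$ forces $|B_R(u)|=O((d-1)^R)=O(n^{4/7})=o(n)$, the pool of unmatched points stays $(1-o(1))dn$ throughout, so the estimate \eqref{eq-srw-bad-edge-i-k} still applies and yields $D_i\preceq\Bin\big(d(d-1)^i,\,d(d-1)^i/n\big)$ conditionally on the past; write $\mu_i:=d^2(d-1)^{2i}/n$ for the mean of this dominating binomial.

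The next step is a deterministic inequality relating boundary sizes to defects,
\begin{equation}\label{eq-plan-bdry}
|\partial B_t(u)|\;\ge\;d(d-1)^{t-1}\;-\;c\sum_{i=K}^{t-1}D_i\,(d-1)^{\,t-1-i}\qquad(1\le t<R),
\end{equation}
for an absolute constant $c=c(d)$ (one may take $c=3$). This follows by unrolling the one-step recursion $|\partial B_{i+1}(u)|\ge(d-1)|\partial B_i(u)|-O(D_{i-1}+D_i)$, which merely records that a defect in round $i$ destroys at most two ``potential'' subtrees hanging from level $i{+}1$, each of which would contribute $(d-1)^{t-1-i}$ vertices to $\partial B_t(u)$; for $t\le K$ the sum is empty and \eqref{eq-plan-bdry} is the exact tree identity since $u$ is a $K$-root. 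Setting $S(u):=\sum_{i=K}^{R-1}D_i(d-1)^{-i}$, \eqref{eq-plan-bdry} gives $|\partial B_t(u)|\ge(1-\tfrac cd S(u))\,d(d-1)^{t-1}$ for all $t<R$, so the lemma reduces to showing that \whp\ \emph{every} $K$-root $u$ satisfies $S(u)=o(1)$.

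The heart of the matter is that this reduction cannot be handled by a first moment bound: $\E S(u)=\Theta(n^{-3/7})$, which is far too large to survive a union bound over the $\le n$ vertices (indeed $\E\sum_{i<R}D_i$ is polynomially large, so there genuinely are many defects, concentrated near depth $R$). The resolution is to bound $D_i$ so as to beat $1/n$ per vertex in \emph{each} range of $i$, using a different tail in each range. Split $[K,R)$ at $m:=\lfloor\tfrac13\log_{d-1}n\rfloor$. For $K\le i\le m$ one has $\mu_i\le d^2n^{-1/3}$, so the Poisson-type bound $\P(D_i\ge 8)\le\mu_i^8/8!=O(n^{-8/3})$ survives a union bound over the $\le n\log n$ pairs $(u,i)$; hence \whp\ every $K$-root has $D_i\le 7$ for all $i\le m$, contributing at most $7\sum_{i\ge K}(d-1)^{-i}=O((d-1)^{-K})=O(1/\log n)$ to $S(u)$. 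For $m<i<R$ a Chernoff bound (multiplicative when $\mu_i\ge6\log n$, additive otherwise) gives $\P(D_i\ge 2\mu_i+6\log n)\le n^{-2}$, which again survives the union bound; hence \whp\ every $K$-root has $D_i\le 2\mu_i+6\log n$ for all such $i$, contributing at most
\[
\sum_{m<i<R}\big(2\mu_i+6\log n\big)(d-1)^{-i}\;\le\;\frac{2d^2}{n}\sum_{i<R}(d-1)^i+6\log n\sum_{i>m}(d-1)^{-i}\;=\;O(n^{-3/7})+O(n^{-1/3}\log n)
\]
to $S(u)$. Summing the two contributions, \whp\ $S(u)=O(1/\log n)=o(1)$ uniformly over all $K$-roots $u$, and \eqref{eq-plan-bdry} then gives $|\partial B_t(u)|\ge(1-O(1/\log n))\,d(d-1)^{t-1}$ for every $t<R$, which is the assertion.

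I expect the main obstacle to be exactly the probabilistic bookkeeping in the previous paragraph: because the expected number of defects is polynomially large, the weighting by $(d-1)^{-i}$ in $S(u)$ is indispensable, and one must check that the per-vertex failure probability is $o(1/n)$ in \emph{every} regime --- which forces the case split between the low, rare-defect levels (where even a single defect already costs $\Theta(1/\log n)$, so the count must be bounded by an absolute constant via a Poisson tail) and the high levels (where $\mu_i$ is polynomially large, so only a multiplicative Chernoff bound is strong enough, but the weight $(d-1)^{-i}$ is correspondingly tiny). Proving the deterministic inequality \eqref{eq-plan-bdry} with an explicit constant is routine but requires a careful count of how many half-edges of each newly created boundary vertex can be swallowed by defects in the round that creates it.
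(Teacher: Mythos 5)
Your proposal is correct and takes essentially the same route as the paper: sequential exposure continued past the exposed $K$-root neighborhood, the deterministic accounting that a defect at level $i$ destroys at most $2(d-1)^{t-i-1}$ boundary vertices at level $t$ (the paper's \eqref{e:badVertexBound-1}), and a low/high split of the levels to beat the union bound over vertices. The only difference is bookkeeping: the paper handles the low levels by invoking Lemma \ref{lem-leq-one-bad-edge} (\whp\ $\tx(B_L(u))\le 1$ for $L=\lfloor\frac15\log_{d-1}n\rfloor$) and bounds the \emph{total} number of defects below $R$ by $n^{1/6}$ with probability $1-\exp(-\Omega(n^{1/6}))$, whereas you prove per-level tails (a Poisson-type bound giving $D_i\le 7$ below $\frac13\log_{d-1}n$, Chernoff above) and sum the weighted series $S(u)$ --- both give the same $o(1)$ relative loss, so your variant is a valid, self-contained reworking of the same argument.
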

\begin{proof}
Let $u$ be a uniformly chosen vertex; expose its $K$-neighborhood, and assume that it is indeed a $K$-root. Following the notation from the proof of Lemma \ref{lem-leq-one-bad-edge} we let $A_{i,k}$ be the event that, in the process of sequentially matching points, the newly exposed pair of the $k$-th unmatched point in $\partial B_i$ belongs to a vertex already in $B_{i+1}$.  Further recall that, by \eqref{eq-srw-bad-edge-i-k} and the discussion thereafter, the number of events $\{A_{i,k} : 0\leq i < R \}$ that occur is stochastically dominated by
a binomial variable with parameters $\Bin\left(d(d-1)^{R}, \frac{d(d-1)^{R-1}}{n}\right)$. Since the expectation of this random variable is
$$d^2 (d-1)^{2R-1} / n \leq O\big(n^{1/7}\big)~,$$ the number of events $A_{i,k}$ with $0\leq i < R$ that occur is less than $n^{1/6}$ (with room to spare) with probability at least $1 - \exp(-\Omega(n^{1/6}))$.

Each event $A_{i,k}$ reduces the number of leaves in level $i+1$ by at most 2 and so reduces the number of leaves in level $t>i$ by at most $2(d-1)^{t-i-1}$ vertices.  It follows that for each $0\leq t < R$,
\begin{equation}\label{e:badVertexBound-1}
|\partial B_t| \geq d(d-1)^{t-1} - \sum_{i<t}\sum_k \one_{A_{i,k}} 2(d-1)^{t-i-1}~.
\end{equation}
Set $L = \lfloor \frac15\log_{d-1} n\rfloor$. As $u$ is a $K$-root, no events of the form $A_{i,k}$ with $i<K$ occur, and the number of events $A_{i,k}$ which occur with $i<L$ is exactly $\tx(B_{L}(u))$, giving
$$ \sum_{i < L} \sum_k \one_{A_{i,k}} 2(d-1)^{t-i-1} \leq
2(d-1)^{t-K-1}\tx(B_{L}(u))~.$$
Furthermore, by the above discussion on the number of events $\{A_{i,k}\}$ that occur, we deduce that
with probability at least $1-\exp(-\Omega(n^{1/6}))$
$$ \sum_{i =L}^{t-1} \sum_k \one_{A_{i,k}} 2(d-1)^{t-i-1} \leq
2(d-1)^{t-L-1} n^{1/6} = o\left((d-1)^t\right).$$
Plugging the above in \eqref{e:badVertexBound-1} we get that with probability $1-\exp(-\Omega(n^{1/6}))$,
\begin{equation}\label{e:badVertexBound-2}
|\partial B_t| \geq (1-o(1))d(d-1)^{t-1} - 2(d-1)^{t-K} \tx(B_{L}(u))~,
\end{equation}
and a union bound implies that \eqref{e:badVertexBound-2} holds for all $K$-roots $u$ and all $t < R$ except with probability $\exp(-\Omega(n^{1/6}))$.

Finally, Lemma \ref{lem-leq-one-bad-edge} asserts that \whp\ every $u$ satisfies $\tx(B_{L}(u)) \leq 1$. Hence, \whp, every $K$-root $u$ satisfies $|\partial B_t|\geq (1-o(1))d(d-1)^{t-1}$ for all $0\leq t \leq R$, as required.
\end{proof}

Let $\partial B_t^*(u)$ denote the set of vertices in $\partial B_t(u)$ with a single (simple) path of length $t$ to $u$. We next wish to establish an estimate for the typical number of such vertices, intersected with some other neighborhood $B_{t'}(v)$.

\begin{lemma}\label{lem-k-roots-non-intersections}
Let $K = \lfloor \log_{d-1}\log n\rfloor$ and $R = \lfloor\frac{4}{7}\log_{d-1}n\rfloor $.
With high probability, any two  $K$-roots $u$ and $v$ with $\dist(u,v)>2K$ satisfy
$$\left|\partial B_t^*(u) \setminus B_{t+1}(v)\right| = (1-o(1))d(d-1)^{t-1} ~\mbox{ for all $t< R-1$}~.$$
\end{lemma}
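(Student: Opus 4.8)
The plan is to account for the two ways in which $\partial B_t^*(u)\setminus B_{t+1}(v)$ can fall short of the full boundary $\partial B_t(u)$. Writing $D_t(u):=\partial B_t(u)\setminus\partial B_t^*(u)$ for the set of vertices at distance exactly $t$ from $u$ that admit at least two distinct simple paths (equivalently, two geodesics) of length $t$ from $u$, one has
\begin{equation*}
\big|\partial B_t^*(u)\setminus B_{t+1}(v)\big|\;\ge\;|\partial B_t(u)|-|D_t(u)|-\big|\partial B_t(u)\cap B_{t+1}(v)\big|.
\end{equation*}
Since trivially $|\partial B_t^*(u)|\le|\partial B_t(u)|\le d(d-1)^{t-1}$, and since Lemma~\ref{lem-u-boundary-sizes} gives $|\partial B_t(u)|=(1-o(1))d(d-1)^{t-1}$ for every $K$-root $u$ and all $t<R$ \whp, it remains to prove that \whp, simultaneously over all pairs of $K$-roots $u,v$ with $\dist(u,v)>2K$ and all $t<R-1$,
\begin{equation*}
|D_t(u)|=o\big((d-1)^t\big)\qquad\text{and}\qquad\big|\partial B_t(u)\cap B_{t+1}(v)\big|=o\big((d-1)^t\big).
\end{equation*}
I will work throughout on the event, of probability $1-o(1)$ by Lemma~\ref{lem-leq-one-bad-edge} and the proof of Lemma~\ref{lem-u-boundary-sizes}, that every $u$ has $\tx(B_L(u))\le1$ for $L=\lfloor\tfrac15\log_{d-1}n\rfloor$ and that the sequential level-by-level exposure of $B_R(u)$ in the configuration model produces at most $n^{1/6}$ \emph{collision} edges (matchings of a boundary point to a point already inside the revealed ball); before the collision edges are added back this exposure reveals a tree $T_u$ rooted at $u$.

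For the first estimate I would charge every $w\in D_t(u)$ to a single collision edge. Fix two simple $u$-geodesics $P_1\ne P_2$ to $w$; their symmetric difference is a nonempty subgraph in which every vertex has even degree, hence contains a cycle, which -- since $T_u$ is acyclic -- uses at least one collision edge, so at least one of $P_1,P_2$, say $P_1$, uses one. Let $e$ be the first collision edge along $P_1$ (starting from $u$), with $P_1$ meeting its endpoints in the order $a$ then $b$, so that $P_1=u\rightsquigarrow a\xrightarrow{e}b\rightsquigarrow w$ and the segment $u\rightsquigarrow a$ consists of $T_u$-edges only, hence is the $T_u$-path to $a$, of length $\mathrm{depth}_{T_u}(a)$. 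The structural point is that when the exposure processes a point of $\partial B_i$ every vertex of exposure-depth at most $i-1$ has already had all of its points matched; therefore a collision occurring while processing $\partial B_i$ can only link to a vertex of exposure-depth $i$ or $i+1$, so both endpoints of a collision edge created ``at level $i_e$'' have $T_u$-depth $\ge i_e$. Hence $\mathrm{depth}_{T_u}(a)\ge i_e$, and
\begin{equation*}
\dist(b,w)\le t-1-\mathrm{depth}_{T_u}(a)\le t-1-i_e,\qquad\text{so}\qquad w\in B_{t-1-i_e}(b),
\end{equation*}
a set of size $O\big((d-1)^{t-i_e}\big)$. Summing over all collision edges, $|D_t(u)|\le O\big(\sum_i c_i(d-1)^{t-i}\big)$ where $c_i$ is the number of collisions at level $i$; this is bounded exactly as in the proof of Lemma~\ref{lem-u-boundary-sizes}: as $u$ is a $K$-root the unique collision with $i<L$ (if any) has $i\ge K$ and contributes $O\big((d-1)^{t-K}\big)=o\big((d-1)^t\big)$ because $(d-1)^K\asymp\log n$, while the at most $n^{1/6}$ collisions with $i\ge L$ contribute $O\big(n^{1/6}(d-1)^{t-L}\big)=o\big((d-1)^t\big)$.

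For the second estimate, observe that $\partial B_t(u)\cap B_{t+1}(v)=\varnothing$ unless $\delta:=\dist(u,v)\le2t+1$ (a common $w$ forces $\delta\le\dist(u,w)+\dist(w,v)\le2t+1$), so assume $\delta\le2t+1$. The key geometric fact is that any geodesic $Q$ from $v$ to such a $w$ stays at distance $\ge\lceil(\delta-1)/2\rceil$ from $u$: letting $x^*\in Q$ minimise $\dist(u,\cdot)$ on $Q$ and $m:=\dist(u,x^*)$, one has $\delta\le m+\dist_Q(x^*,v)$ and $t\le m+\dist_Q(x^*,w)$, while $\dist_Q(x^*,v)+\dist_Q(x^*,w)=|Q|\le t+1$, forcing $m\ge(\delta-1)/2\ge K$ (using $\delta>2K$). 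In the tree-like picture this confines $w$: it must descend, in $T_u$, from the depth-$\lceil(\delta-1)/2\rceil$ ancestor of $v$ (a unique vertex, as $T_u$ is a tree), and a fixed vertex at depth $\ge K$ has at most $(d-1)^{t-K}=o\big((d-1)^t\big)$ descendants at depth $t$. It remains to absorb the error coming from the fact that $G\ne T_u$, i.e.\ to bound those $w\in\partial B_t(u)$ that enter $B_{t+1}(v)$ only through a collision-edge shortcut: here one charges $w$ to the first collision edge used by a $v$-to-$w$ geodesic and argues as in the first estimate -- using again that both endpoints of a level-$i$ collision have $T_u$-depth $\ge i$, together with $\tx(B_L(u))\le1$ and the $n^{1/6}$ bound -- to see that such $w$ number at most $O\big((d-1)^{t-K}\big)+O\big(n^{1/6}(d-1)^{t-L}\big)=o\big((d-1)^t\big)$. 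I expect this last step -- rigorously controlling how collision edges can shorten distances from $v$ into $\partial B_t(u)$, uniformly over all pairs of $K$-roots and all $t<R-1$ -- to be the principal obstacle; the other ingredients are direct adaptations of the exposure-process estimates already established for Lemmas~\ref{lem-leq-one-bad-edge} and~\ref{lem-u-boundary-sizes}.
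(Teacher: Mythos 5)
Your decomposition into the two error terms is the right first step, and your bound on $|D_t(u)|$ (vertices with two simple length-$t$ paths from $u$) is essentially sound, though heavier than necessary: since every simple path of length $t$ from $u$ is in particular non-backtracking, there are at most $d(d-1)^{t-1}$ of them, and once $|\partial B_t(u)|=(1-o(1))d(d-1)^{t-1}$ is known (Lemma~\ref{lem-u-boundary-sizes}), a one-line pigeonhole argument already gives $|\partial B_t(u)\setminus\partial B_t^*(u)|=o(d(d-1)^{t-1})$, with no need for the collision-edge charging scheme.

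The genuine gap is in your second estimate, the bound on $|\partial B_t(u)\cap B_{t+1}(v)|$. You attempt to control this set using only the data of the exploration from $u$ (the BFS tree $T_u$ on $B_R(u)$ together with its collision edges), but for $t$ close to $R$ this data simply does not determine which vertices of $\partial B_t(u)$ lie within distance $t+1$ of $v$: one may have $\dist(u,v)$ as large as $2t+1\approx\frac87\log_{d-1}n$, so $v$ need not even belong to $B_R(u)$ (hence ``the depth-$\lceil(\delta-1)/2\rceil$ ancestor of $v$ in $T_u$'' need not exist), and a path of length $\le t+1$ from $v$ to $w\in\partial B_t(u)$ can travel largely, or entirely except for its endpoint, through edges that the $u$-exploration never exposed. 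Your proposed fix --- charging such a $w$ to ``the first collision edge used by a $v$-to-$w$ geodesic'' --- therefore has nothing to charge to: the dichotomy ``either the geodesic lives in $T_u$ (so $w$ descends from a fixed ancestor) or it uses a collision edge of the $u$-exploration'' is false, and no purely deterministic argument from the $u$-ball can close this, because the quantity is intrinsically about the pair $(u,v)$. The paper resolves exactly this point probabilistically: it exposes the neighborhood of the \emph{pair}, i.e.\ the layers $\partial\widetilde B_i=\{w:\dist(w,\{u,v\})=i\}$, and shows (by the same collision estimates as in Lemma~\ref{lem-u-boundary-sizes}) that $|\partial\widetilde B_t|=(2-o(1))d(d-1)^{t-1}$ for all $t\le R$; since each of $\partial B_t(u)$, $\partial B_t(v)$ has size at most $d(d-1)^{t-1}$, this forces $|\partial B_t(u)\setminus B_t(v)|$ and $|\partial B_{t+1}(v)\setminus B_{t+1}(u)|$ to be $(1-o(1))$ of their maxima, and the bound $|\partial B_t(u)\cap B_{t+1}(v)|=o(d(d-1)^t)$ then follows by set algebra, after which the starred statement follows from the counting argument above. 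To repair your proof you would need to add this joint (or sequential two-source) exposure estimate; without it the second half does not go through.
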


\begin{proof}
The proof follows the same arguments as the proof of Lemma \ref{lem-u-boundary-sizes}, except now we begin with two randomly chosen vertices $u,v$. Expose $B_K(u)$ and $B_K(v)$, at which point we may assume that both $u$ and $v$ are $K$-roots, and that $\dist(u,v)>2K$. Next, we sequentially expand the layers
$$\partial \widetilde{B}_i\deq\{w\in V:\dist(w,\{u,v\})=i\}\mbox{ for $K< i \leq R$}~.$$
By the above assumption on $u$ and $v$, we have $$|\partial \widetilde{B}_K|=2d(d-1)^{K-1}~.$$
Repeating essentially the same calculations as those appearing in the proof of Lemma~\ref{lem-u-boundary-sizes} now shows that with probability $1-\exp(-\Omega(n^{1/6}))$,
\begin{equation}
  \label{eq-Bt-uv-bound}
  |\partial \widetilde{B}_{t}|=(2-o(1))d(d-1)^{t-1}\mbox{ for all $t\leq R$}~,
\end{equation}
thus \whp, the above holds for all pairs of $K$-roots $u,v$ with $\dist(u,v)>2K$.

We claim that the statement of the lemma follows directly from \eqref{eq-Bt-uv-bound}. To see this,
assume that \eqref{eq-Bt-uv-bound} indeed holds for $u,v$ as above, and let $t < R-1$. Clearly, at most $d(d-1)^{t-1}$ of the vertices in $\partial \widetilde{B}_{t}$ belong to $\partial B_{t}(v)$, hence $$\left|\partial B_{t}(u) \setminus B_{t}(v)\right|=(1-o(1))d(d-1)^{t-1}~,$$ and similarly,
  $$\left|\partial B_{t+1}(v) \setminus B_{t+1}(u)\right|=(1-o(1))d(d-1)^{t}~.$$ Therefore,
 \begin{align*}
&\left|\partial B_{t}(u) \cap B_{t}(v)\right|=o\left(d(d-1)^{t-1}\right)~,\\
&\left|\partial B_{t+1}(v) \cap B_{t+1}(u)\right|=o\left(d(d-1)^{t}\right)~,
 \end{align*}
and altogether we obtain that
\begin{align*}
\left|\partial B_t(u)\cap B_{t+1}(v)\right| &\leq  \left|\partial B_{t}(u) \cap B_{t}(v)\right| + \left| B_{t}(u) \cap \partial B_{t+1}(v)\right| \\
&= o(d(d-1)^{t})~.
\end{align*}
Since there are at most $d(d-1)^t$ paths of length $t$ from $u$ to $\partial B_t(u)$, and since $\left|\partial B_t(u)\right|=(1-o(1))d(d-1)^{t-1}$, it then follows that $$\left|\partial B_t(u) \setminus \partial B_t^*(u)\right|=o(d(d-1)^{t-1})~.$$  We deduce that $\left|\partial B_t^*(u)\cap B_{t+1}(v)\right| =o(d(d-1)^{t})$,
and the proof follows.
\end{proof}

\begin{lemma}\label{lem-k-roots-intersections}
Let $K = \lfloor\log_{d-1}\log n\rfloor$ and $T = \lfloor \frac12 \log_{d-1}n\rfloor$. With high probability, any two  $K$-roots $u$ and $v$
with $\dist(u,v)>2K$ satisfy
$$\cS_{2T+\ell}(u,v) \geq (1-o(1))\frac1{n}d(d-1)^{2T+\ell-1}$$
for all $2K \leq \ell \leq \frac1{20}\log_{d-1}n$, where
$\cS_k(u,v)$ denotes the number of \emph{simple} paths of length $k$ between $u$ and $v$,
and the $o(1)$-term tends to $0$ as $n\to\infty$.
\end{lemma}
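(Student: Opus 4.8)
The plan is to count, for two fixed $K$-roots $u,v$ at distance $>2K$, the simple paths of length $2T+\ell$ from $u$ to $v$ by splitting each such path at its midpoint. Concretely, a simple path $\gamma$ of length $2T+\ell$ from $u$ to $v$ passes through some vertex $w$ at distance $T$ from $u$ along $\gamma$ (the ``$T$-th vertex''), and $\gamma$ decomposes as the concatenation of a simple path of length $T$ from $u$ to $w$ and a simple path of length $T+\ell$ from $w$ to $v$, the two sharing only the endpoint $w$. The idea is to take $w$ to range over $\partial B_T^*(u)$, the set of vertices reachable from $u$ by a \emph{unique} simple path of length $T$; by Lemma~\ref{lem-u-boundary-sizes} (with $R>T$) and the unique-path refinement established inside the proof of Lemma~\ref{lem-k-roots-non-intersections}, we have $|\partial B_T^*(u)| = (1-o(1))d(d-1)^{T-1}$. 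For each such $w$, I would then count simple paths of length $T+\ell$ from $w$ to $v$ that avoid the (already fixed, unique) $u$-to-$w$ path except at $w$; since that path lives inside $B_T(u)$ and $T+\ell$ is still below $R$, the ``avoidance'' constraint removes only $o$ of the relevant vertices, so it is a negligible correction.

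The heart of the matter is therefore: for a \emph{fixed} $K$-root $w$ and a fixed $K$-root $v$ with $\dist(w,v)$ not too small, estimate the number of simple paths of length $m := T+\ell$ from $w$ to $v$, where $m$ lies between roughly $\tfrac12\log_{d-1}n$ and $\tfrac12\log_{d-1}n + \tfrac1{20}\log_{d-1}n$. I would do this by exposing the configuration model as an exploration from $w$: run the layer-by-layer exploration used in Lemmas~\ref{lem-leq-one-bad-edge}--\ref{lem-u-boundary-sizes} up to depth $m-1$, so that by Lemma~\ref{lem-u-boundary-sizes} (applied with the $\partial B_t^*$-refinement) there are $(1-o(1))d(d-1)^{m-2}$ unmatched points at the frontier $\partial B_{m-1}^*(w)$, each sitting at the end of a unique simple path of length $m-1$ from $w$. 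Each such frontier point is matched to a uniformly random remaining point; the probability it gets matched to one of the $(d-1)\cdot|\partial B_{m-1}(v) \cap (\text{already exposed region})|$... more carefully, to one of the $d-1$ free points at a vertex in $\partial B_1(v')$ for $v'$ a neighbor of $v$ — i.e. the probability that a given frontier point, when matched, lands at distance exactly $1$ from $v$, closing a simple path of length $m$ — is $(1+o(1))\,d(d-1)^{?}/ (dn)$. Summing Bernoulli indicators over the $(1-o(1))d(d-1)^{m-2}$ frontier points gives an expected number of closing matches of order $\frac1n d(d-1)^{2m-1} = \frac1n d(d-1)^{2T+2\ell-1}$ — which overshoots the claim; so in fact one should only demand that the path close at $v$ \emph{directly} (match a frontier point of $\partial B_{m-1}^*(w)$ to one of the $d$ points of $v$ itself), giving expectation $(1-o(1))\frac1n d(d-1)^{2(T+\ell)-2}\cdot d \asymp \frac1n d(d-1)^{2T+\ell-1}$ after accounting that we need length exactly $2T+\ell$, i.e. $m-1 = T+\ell-1$ on the $w$-side is already fixed by the midpoint split. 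The bookkeeping of which layer indices produce exactly the target length $2T+\ell$ is the part that must be done with care, but it is mechanical.

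For the concentration, I would use a second-moment / Azuma argument on the sequential exposure: condition on the exploration up to the frontier $\partial B_{m-1}^*(w)$ (a good event of probability $1-o(1)$ by Lemmas~\ref{lem-leq-one-bad-edge} and \ref{lem-u-boundary-sizes}), then reveal the matches of the frontier points one at a time. The indicator that the $j$-th frontier point closes a path to $v$ is, conditionally, Bernoulli with parameter $(1+o(1))$ times a fixed value, and these are negatively associated (sampling without replacement from the matching), so the total is concentrated around its mean with, say, a Chernoff-type lower tail $\exp(-\Omega(\text{mean}))$; since the mean is $n^{\Omega(1)}$ (as $2T+\ell \geq \log_{d-1}n + 2K$ forces $d(d-1)^{2T+\ell-1}/n \to \infty$), the lower bound holds with probability $1-\exp(-n^{\Omega(1)})$ for the fixed pair $(u,v)$. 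A union bound over the at most $n^2$ pairs of $K$-roots and over the $O(\log n)$ admissible values of $\ell$ then finishes the proof. The main obstacle, as in the earlier lemmas, is not any single estimate but maintaining the exploration hypothesis — that throughout the depth-$m$ exploration the pool of unmatched points stays $(1-o(1))dn$ and no short cycles appear near the frontier — which is exactly what Lemmas~\ref{lem-u-boundary-sizes} and \ref{lem-k-roots-non-intersections} were set up to guarantee, so I would invoke them rather than redo the exploration from scratch.
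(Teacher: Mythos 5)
There is a genuine structural gap in your mid-point decomposition. You split each path at the vertex $w$ at distance $T$ from $u$, leaving a leg of length $m=T+\ell$ on the $v$-side, and then propose to estimate, \emph{for a fixed} $w$, the number of simple paths of length $m$ from $w$ to $v$ by exploring $B_{m-1}(w)$ and counting frontier points matched directly to $v$. But for a fixed pair $(w,v)$ this count has expectation of order $d(d-1)^{m-1}/n \approx d(d-1)^{\ell-1}n^{-1/2}=o(1)$ throughout the range $\ell\le\frac1{20}\log_{d-1}n$: typically there are \emph{no} such paths at all, so no per-$w$ concentration statement (Chernoff, Azuma, negative association or otherwise) can deliver a lower bound of the claimed form — your assertion that ``the mean is $n^{\Omega(1)}$'' conflates the per-$w$ count with the aggregate over all $w\in\partial B_T^*(u)$. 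If instead you aggregate over all such $w$, the exploration you need to control is the ball of radius $2T+\ell-1\approx\log_{d-1}n+\ell-1$ around $u$, whose branching size $(d-1)^{2T+\ell-1}$ exceeds $n$; this is beyond the radius $R=\lfloor\frac47\log_{d-1}n\rfloor$ up to which Lemmas~\ref{lem-u-boundary-sizes} and \ref{lem-k-roots-non-intersections} give control, the ``pool of unmatched points stays $(1-o(1))dn$'' hypothesis fails, and the uniqueness/avoidance structure you invoke (``$T+\ell$ is still below $R$'') covers each leg separately but not the union of the $\Theta(\sqrt n)$ overlapping explorations, whose closure indicators are heavily dependent. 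So the heart of the lemma — getting a non-degenerate, concentrated count without ever exploring past radius $\approx\frac12\log_{d-1}n+O(\ell)$ — is exactly the part the proposal does not supply.

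The fix is to balance the split, which is what the paper does: grow the two neighborhoods to radii $t_u=\lceil\frac12(2T+\ell-1)\rceil$ and $t_v=\lfloor\frac12(2T+\ell-1)\rfloor$, set $A_u=\partial B_{t_u}^*(u)\setminus B_{t_v}(v)$ and $A_v=\partial B_{t_v}^*(v)\setminus B_{t_u}(u)$ (both of size $(1-o(1))d(d-1)^{t_\cdot-1}$ by Lemma~\ref{lem-k-roots-non-intersections}), and count the matchings $M_{u,v}$ between points of $A_u$ and points of $A_v$; each such matching closes a distinct simple path of length $t_u+t_v+1=2T+\ell$, and $M_{u,v}$ is sandwiched between binomials with mean $(1\pm o(1))\frac1nd(d-1)^{2T+\ell-1}=\Omega(\log_{d-1}^2n)$, so Chernoff gives failure probability $\exp(-\Omega(\log^2 n))=o(n^{-3})$, enough for the union bound over pairs and over $\ell$ (note this mean is only polylogarithmic at $\ell=2K$, not $n^{\Omega(1)}$). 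Your cross-matching intuition is the right one, but it must be applied to two frontiers of comparable radius about $\frac12(2T+\ell)$ each, not to one frontier of radius $T$ and explorations of depth $T+\ell$ hung off each of its vertices.
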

\begin{proof}
Fix $\ell$ as above and expose the neighborhoods of $u$ and $v$ up to distance
$$t_u=\left\lceil\mbox{$\frac{1}{2}$}(2T+\ell-1)\right\rceil~,~t_v=\left\lfloor\mbox{$\frac{1}{2}$}(2T+\ell-1)\right\rfloor$$ respectively. Notice that this selection gives
$$2T+\ell-1 =t_u+t_v~,~0\leq t_u-t_v\leq 1~.$$  We further define
$$A_u=\partial B_{t_u}^*(u) \setminus B_{t_v}(v)~,~A_v=\partial B_{t_v}^*(v) \setminus B_{t_u}(u)~.$$
We may now assume that the statement of Lemma \ref{lem-k-roots-non-intersections} holds with respect to the
neighborhoods of $u$ and $v$ already revealed (and them alone), that is
\begin{align*}
 \left|A_u\right|&=(1-o(1))d(d-1)^{t_u-1}~,\\
 \left|A_v\right|&=(1-o(1))d(d-1)^{t_v-1}~.
\end{align*}
In other words, $A_u$ has $(1-o(1))d(d-1)^{t_u}$ unmatched points and similarly, $A_v$ has $(1-o(1))d(d-1)^{t_v}$ unmatched points.

Now, sequentially match each of the points in $A_u$, and let $M_{u,v}$ denote the number of points of $A_u$ matched with points in $A_v$.
To obtain an upper bound on $M_{u,v}$, we once again repeat the arguments of Lemma \ref{lem-leq-one-bad-edge}, implying that it is stochastically bounded from above by a binomial
variable as follows
$$ M_{u,v} \preceq \Bin\Big((d-1)|A_u|, \frac{(d-1)|A_v|}{(1-o(1))dn}\Big)~.$$
Since
$$\frac{(d-1)^2|A_u||A_v|}{dn}\leq O(n^{1/10})~,$$ Chernoff bounds (cf., e.g., \cite{AS}) give that $M_{u,v} \leq n^{1/4}$ except with probability $\mathrm{e}^{-\Omega(n^{1/4})}$.  We thus assume that indeed $M_{u,v} \leq n^{1/4}$.

In this case, as we sequentially match points, each point in $A_u$ has at least $|A_v|-n^{1/4}$ remaining points in $A_v$ which it could potentially be matched to. That is, conditional on previous matchings each point has at least  $\frac{|A_v|-n^{1/4}}{dn}$ probability of being matched to a point in $A_v$.  It follows that $M_{u,v}$ is stochastically bounded from below by a binomial variable
 $$ M_{u,v} \succeq \Bin\Big((d-1)|A_u|, \frac{(d-1)(|A_v|-n^{1/4})}{dn}\Big)~.$$  Now $$\frac{(d-1)^2|A_u|(|A_v|-n^{1/4})}{dn} = (1-o(1))\frac1n d(d-1)^{2T+\ell-1}= \Omega(\log_{d-1}^2 n)~,$$ and again by Chernoff bounds we have that the number of matchings is at least $(1-o(1))\frac1n d(d-1)^{2T+\ell-1}$ except with probability $$\exp(-\Omega(\log_{d-1}^2 n))=o(n^{-3})~.$$  Each matching between a point in $A_u$ and a point in $A_v$ determines a simple path from $u$ to $v$ of length $2T+\ell$, thus $$\cS_{2T+\ell}(u,v) \geq M_{u,v} \geq (1-o(1))\frac1{n}d(d-1)^{2T+\ell-1}~.$$  Taking a union bound over all $u$, $v$ and $\ell$ completes the result.
\end{proof}

\begin{proof}[\emph{\textbf{Proof of Theorem \ref{thm-rw}}}]
Set $K = \lfloor \log_{d-1}\log n \rfloor$ and set $T = \lfloor \frac12 \log_{d-1}n\rfloor$.  By Lemma \ref{lem-srw-to-good-roots}, after $4K$ steps with high probability the random walk is at a $K$-root.  Since we are only seeking to establish $\tmix$ up to an accuracy of  $o(\sqrt{\log_{d-1} n})$ and since $K=o(\sqrt{\log_{d-1} n})$ it is enough to consider the worst case mixing from a $K$-root to establish the result.

Let us assume that the statement of Lemma \ref{lem-k-roots-intersections} holds.    Let $u$ and $v$ be $K$-roots with $\dist(u,v)> 2K$.  By Lemma \ref{lem-k-roots-intersections}, $$\cS_{2T+\ell}(u,v)\geq \frac{1-o(1)}n d(d-1)^{2T+\ell-1}~\mbox{ for  $2K \leq \ell \leq \frac1{20}\log_{d-1}n$}~.$$
Now let $\cT$ be the cover tree for $G$ at $u$ with a map $\phi$, as defined in \eqref{eq-cover-tree}.  Since each simple path in $G$ corresponds to a distinct simple path in $\cT$,
\begin{align*}
\#\left\{ w\in \cT: \phi(w)=v,\; \dist(\rho,w)= 2T+\ell \right\} &\geq \cS_{2T+\ell}(u,v) \\
&\geq \frac{1-o(1)}n d(d-1)^{2T+\ell-1}~,
\end{align*}
when  $2K \leq \ell \leq \frac1{20}\log_{d-1}n$.  Let $X_t$ be a \SRW\ on $\cT$ started from $\rho$ and let $W_t=\phi(X_t)$ be the corresponding \SRW\ on $\G$ started from $u$.  Note that, by symmetry, conditioned on $\dist(\rho,X_t)=k$ the random walk is uniform on the $d(d-1)^{k-1}$ points $\{w \in \cT: \dist(\rho,w)=k\}$.  In addition, a random walk on a $d$-regular tree with $d\geq 3$ is transient, since the distance from the root is a biased random walk with positive speed.  In particular, the random walk returns to $\rho$ only a finite number of times almost surely.  If $X_t \neq \rho$ then
$$ \big(\dist(X_{t+1},\rho) - \dist(X_{t},\rho)\big) \sim\left\{\begin{array}
  {ll}-1 & 1/d~,\\
  1 & (d-1)/d~.
\end{array}\right.$$
Therefore, the Central Limit Theorem gives that
\begin{equation}\label{e:clt}
\frac{\dist(X_{t},\rho) - \frac{(d-2)t}{d}}{\frac{2\sqrt{d-1}}{d}\sqrt{ t}} \stackrel{\mathrm{d}}{\longrightarrow} N(0,1).
\end{equation}
Let $A$ be the set of vertices which are $K$-roots and whose distance from $u$ is greater than $2K$.  Since there are at most $d(d-1)^{2K-1}=o(n)$ vertices within distance $2K$ of $u$, and since by Lemma \ref{lem-srw-to-good-roots} there are $n-o(n)$ $K$-roots in total, it follows that $|A| \geq n-o(n)$.

Combining these arguments, we deduce that if $v\in A$ and
\begin{equation}\label{eq-srw-t-choice}t=\Big\lfloor \frac{d}{d-2}\log_{d-1} n + k \sqrt{\log_{d-1}n}\Big\rfloor\end{equation} then
\begin{align*}
\P(W_t = v) &= \sum_{j=0}^t \P(\dist(\rho,X_t) = j) \frac{\#\{ w\in \cT: \phi(w)=v,\; \dist(\rho,w)=j \}}{d(d-1)^{j-1}}\\
&\geq \sum_{\ell=2K}^{\frac1{20}\log_{d-1}n} \P(\dist(\rho,X_t) = 2T+\ell) \frac{\frac{1+o(1)}n d(d-1)^{2T+\ell-1}}{d(d-1)^{2T+\ell-1}}\\
&= (1+o(1)\frac1n \P\left(2T + 2K \leq \dist(\rho,X_t) \leq 2T +  \frac1{20}\log_{d-1}n\right)\\
&= (1+o(1))\frac1n \left(1-\Phi\left( \frac{-k}{\Lambda} \right) \right)~,
\end{align*}
where the final equality follows from equation \eqref{e:clt} and where $\Phi$ is the distribution function of the standard normal and $\Lambda=\frac{2\sqrt{d-1}}{d-2}\sqrt{\frac{d}{d-2}}$.  Then
\begin{align}\label{e:tvUBound}
\| \P(W_t\in \cdot)&-\pi \|_{\mathrm{TV}}
=\sum_{v\in V} \max\left \{ \frac1n-\P(W_t =v)\;,\;0 \right \}\nonumber\\
&\leq \frac{n-|A|}{n} + \sum_{v\in A} \max\left \{\frac1n - \P(W_t=v)\;,\;0 \right \} \nonumber\\
&\leq o(1)+ (1+o(1)) |A| \frac1n \Phi\left( \frac{-k}{\Lambda} \right) = (1+o(1))  \Phi\left( \frac{-k}{\Lambda} \right)~.
\end{align}
It remains to provide a matching lower bound for $\| \P(W_t\in \cdot)-\pi \|_{\mathrm{TV}}$. To this end, let $M=\log_{d-1}n - K$ and note that $$\pi(B_M(u))\leq \frac1n d(d-1)^{M-1} =o(1)~.$$  If $w\in T$ and $\dist(\rho,w)\leq M$ then $\phi(w)\in B_M$.  For the same choice of $t$ as given in \eqref{eq-srw-t-choice}, equation \eqref{e:clt} gives that $$\P(\dist(X_t,\rho) \leq M) =    (1+o(1))  \Phi\left( \frac{-k}{\Lambda} \right)~,$$ and so $$\P(W_t \in B_M) \geq  (1+o(1))\Phi\left( \frac{-k}{\Lambda} \right)~.$$  It follows that
\begin{equation}\label{e:tvLBound}
\| \P(W_t\in \cdot)-\pi \|_{\mathrm{TV}} \geq \P(W_t\in B_M) - \pi(B_M) = (1+o(1))\Phi\left( \frac{-k}{\Lambda} \right).
\end{equation}
Combining equations \eqref{e:tvUBound} and \eqref{e:tvLBound} establishes that for any $0<s<1$
$$
\tmix(s)=\log_{d-1} n - (\Lambda+o(1)) \Phi^{-1}(s)\sqrt{\log_{d-1} n}~,
$$
completing the proof.
\end{proof}

\section{Cutoff for the non-backtracking random walk}\label{sec:nbrw}
In this section, we prove Theorem \ref{thm-nbrw} that establishes the cutoff of the \NBRW\ on a typical
random $d$-regular graph for $d\geq 3$ fixed. Throughout this section, let $d\geq 3$ be some fixed integer, and consider some $G\sim\G(n,d)$.

Since the \SRW\ induces a cutoff window of order $\sqrt{\log n}$ merely on account of its backtracking ability,
throughout our arguments in Section \ref{sec:rw} we could easily afford burn-in periods of order $\log\log n$.
On the other hand, our statements for the \NBRW\ establish a constant cutoff window (and moreover, logarithmic in $1/\epsilon$), and therefore require a far more delicate approach.

Recall that the \NBRW\ is a Markov chain on the set of directed edges; we thus begin by defining
a \emph{directed} $K$-root, analogous to Definition \ref{def-K-root}.

\begin{definition}[\emph{directed $K$-root}]\label{def-directed-k-root}
A directed edge $\dx\in\vE$ is a directed $K$-root iff the induced subgraph on $B_K(\dx)$ is a tree, i.e., $\tx(B_K(\dx))=0$.
\end{definition}

As before, it is straightforward to show that the directed edges of $G$ have locally-tree-like neighborhoods. This is stated by
the next lemma.
\begin{lemma}\label{lem-dir-leq-one-bad-edge}
Let $L = \lfloor \frac{1}{5}\log_{d-1}n \rfloor$. Then \whp, $\tx(B_L(\dx)) \leq 1$ for all $\dx\in \vE$. In addition, for any $r=r(n)$ and $h=h(n)\to\infty$ arbitrarily slowly, \whp\ at least $dn-h(d-1)^{2r}$ directed edges satisfy $\tx(B_r) = 0$.
\end{lemma}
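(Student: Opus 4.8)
The plan is to mirror the exploration argument of Lemma \ref{lem-leq-one-bad-edge}, but carried out on the directed-edge neighborhood $B_t(\dx)$ rather than the vertex neighborhood $B_t(u)$. The first observation is that, as an undirected graph, $B_t(\dx)$ is contained in the vertex neighborhood $B_{t+1}(x)$ of the head $x$ of $\dx$ (a non-backtracking walk of length $t$ from $\dx$ reaches vertices at graph-distance at most $t+1$ from the tail, hence at most $t+1$ from $x$, and typically exactly $t$ beyond the first step), so the tree excess of $B_t(\dx)$ is at most that of $B_{t+1}(x)$. Consequently the first assertion — that \whp\ $\tx(B_L(\dx))\le 1$ for all $\dx\in\vE$ with $L=\lfloor\frac15\log_{d-1}n\rfloor$ — follows immediately from Lemma \ref{lem-leq-one-bad-edge} applied with radius $L+1$ (which is still at most $\frac15\log_{d-1}n$ after adjusting constants, or one re-runs the computation of Lemma \ref{lem-leq-one-bad-edge} verbatim with $t=L+1$; the exponent $n^{-6/5}$ is only mildly degraded and still summable over the $dn$ directed edges). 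So the first sentence of the lemma is essentially a corollary of what is already proved.

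For the second assertion — that for any $r=r(n)$ and any $h=h(n)\to\infty$, \whp\ at least $dn-h(d-1)^{2r}$ directed edges $\dx$ have $\tx(B_r(\dx))=0$ — I would argue by a first-moment computation on the \emph{number of bad directed edges}. Fix $r$, and for a directed edge $\dx$ let $N_{\mathrm{bad}}$ count those $\dx$ with $\tx(B_r(\dx))\ge 1$. Expose the neighborhood of a uniformly random directed edge $\dx$ level by level via the configuration-model exploration, exactly as in the proof of Lemma \ref{lem-leq-one-bad-edge}: at level $i$ we pair at most $d(d-1)^i$ points (since the branching factor in the non-backtracking setting is $d-1$ and $|\partial B_i(\dx)|\le d(d-1)^{i-1}$ with the initial level of size $d-1$ or $d$ depending on convention), so $\sum_{i<r} m_i\le O((d-1)^r)$, and each point creates a collision with the already-revealed set — of size at most $O((d-1)^r)$ — with probability at most $O((d-1)^r/n)$. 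Hence
$$\P\big(\tx(B_r(\dx))\ge 1\big)\le O\!\left(\frac{(d-1)^{2r}}{n}\right),$$
so $\E N_{\mathrm{bad}}\le O\big((d-1)^{2r}\big)$. By Markov's inequality, $\P\big(N_{\mathrm{bad}}\ge h(d-1)^{2r}\big)\le O(1/h)=o(1)$ since $h\to\infty$. This gives exactly the claimed bound (absorbing the $O(1)$ constant into $h$, or equivalently noting that the statement is for $h$ growing arbitrarily slowly so any fixed multiplicative constant is harmless).

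The step I expect to require the most care is not the first moment itself but making sure the exploration bound on collision probabilities is uniform in $r$ over the whole relevant range and correctly accounts for the non-backtracking branching — in particular that $|B_r(\dx)|$ and the number of points matched stay $O((d-1)^r)$ and that the ``$(1-o(1))dn$ unmatched points'' denominator is legitimate throughout (this needs $(d-1)^{2r}=o(n)$, which is implicitly the interesting regime; if $(d-1)^{2r}$ is comparable to or larger than $n$ the bound $dn-h(d-1)^{2r}$ is vacuous, so there is nothing to prove). One should also confirm that a collision during the exploration indeed forces $\tx\ge1$ and conversely that $\tx\ge1$ implies at least one such collision was recorded, so that $N_{\mathrm{bad}}$ is controlled exactly by the event counts — this is the same bookkeeping as in Lemma \ref{lem-leq-one-bad-edge} and carries over directly. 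A brief remark reconciling the directed boundary sizes (the factor $(d-1)^{2r}$ rather than $(d-1)^{2(r-1)}$) with the constants in \eqref{eq-aik-stoch-dom} closes the argument.
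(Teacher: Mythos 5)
Your proposal is correct and takes essentially the same route as the paper: the first claim is reduced to Lemma \ref{lem-leq-one-bad-edge} via containment of the directed neighborhood in a vertex neighborhood (the paper uses the slightly sharper inclusion $\tx(B_t(\dx))\leq\tx(B_t(v))$ for the head $v$ of $\dx$, so the cited lemma applies verbatim without enlarging the radius to $L+1$), and the second claim is exactly the paper's argument, namely the exploration-process bound $\P\left(\tx(B_r(\dx))>0\right)=O\left((d-1)^{2r}/n\right)$ giving expected number of bad directed edges $O\left((d-1)^{2r}\right)$, followed by Markov's inequality with the $h\to\infty$ factor. Your side remarks (the vacuity of the statement unless $(d-1)^{2r}=o(n)$, and the bookkeeping identifying collisions with tree excess) are accurate and consistent with the paper.
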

\begin{proof}
Clearly, if $\dx=(u,v)\in\vE$ we have $\tx(B_t(\dx)) \leq \tx(B_t(v))$ for any $t$, thus the first statement of the lemma follows immediately from Lemma \ref{lem-leq-one-bad-edge}.

To show the second statement, recall the exploration process performed in the proof Lemma \ref{lem-leq-one-bad-edge}, where $A_{i,k}$ denoted
the event that the $k$-th matching generated in the $i$-th layer already belongs to our exposed neighborhood. In our setting, we perform a similar exploration process on a random $\dx=(u,v)\in\vE$, only this time the initial vertex $v$ corresponds to $d-1$ points rather than $d$ (having excluded its edge to $u$). Thus, \eqref{eq-aik-stoch-dom} translates into
\begin{equation*}
  \sum_{i=0}^{t-1}\sum_{k=1}^{m_i} \one_{A_{i,k}} \preceq \Bin\left( (d-1)^{t+1}, \frac{(d-1)^{t}}{n}\right)~.
\end{equation*}
It follows that the probability that $\tx(B_r(\dx)) > 0$ is at most $O\left(d-1)^{2r}/n\right)$,
and the expected number of such $\dx\in\vE$ is $O\left((d-1)^{2r}\right)$, as required.
\end{proof}
The following lemma, which is the analogue of Lemma \ref{lem-srw-to-good-roots}, shows that a small burn-in period typically brings
the \NBRW\ to a directed $L$-root for a certain $L$ (and allows us to restrict our attention to such starting positions).
\begin{lemma}\label{lem-nbrw-to-good-roots}
Let $\epsilon > 0$, set $K = \lceil \log_{d-1}(2/\epsilon) \rceil$ and $L=\lfloor \frac{1}{6}\log_{d-1}n \rfloor$. Let $\dx\in \vE$ be such that
$\tx(B_{K+L}(\dx))\leq 1$.
Then the non-backtracking walk of length $K$ from $\dx$ ends at a directed $L$-root with probability at least $1-\epsilon$.
\end{lemma}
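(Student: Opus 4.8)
The plan is to split the argument according to whether $\tx(B_{K+L}(\dx))$ equals $0$ or $1$ (the hypothesis permits only these two values). If $\tx(B_{K+L}(\dx))=0$, then the undirected graph underlying $B_{K+L}(\dx)$ is a tree and the claim is immediate: a non-backtracking walk of length $K$ from $\dx$ ends at some directed edge $\dy$ with $\dist(\dx,\dy)\le K$, so $\dy\in B_K(\dx)$; concatenating the walk from $\dx$ to $\dy$ with any non-backtracking walk of length $\le L$ issued from $\dy$ is again non-backtracking (the first edge of the second walk is, by definition of a non-backtracking walk issued from $\dy$, not the reversal of $\dy$), hence $B_L(\dy)\subseteq B_{K+L}(\dx)$; since a subgraph of a tree is a tree, $\tx(B_L(\dy))=0$ and $\dy$ is a directed $L$-root with probability $1$.

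The substance is the case $\tx(B_{K+L}(\dx))=1$, in which the undirected graph underlying $B_{K+L}(\dx)$ is unicyclic; let $C$ be its unique cycle, and for a directed edge $(a,b)$ of $B_{K+L}(\dx)$ say that it \emph{points away from $C$} if $\dist(b,C)=\dist(a,C)+1$, distances being measured inside $B_{K+L}(\dx)$. I would first record two facts. First, if a non-backtracking walk $\dx=\dy_0,\dy_1,\dots$ has $\dy_t=(a,b)$ pointing away from $C$ with $t\le K$, then $\dy_{t+1}$ also points away from $C$: here $b\notin C$ and $a$ is its unique neighbour closer to $C$, so every continuation $(b,c)$ with $c\ne a$ satisfies $\dist(c,C)=\dist(b,C)+1$. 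Second, if $\dy_K$ points away from $C$ then it is already a directed $L$-root: every non-backtracking walk issued from $\dy_K$ keeps pointing away from $C$ and so never traverses an edge of $C$, whence $B_L(\dy_K)\subseteq B_{K+L}(\dx)$ contains no cycle (as $C$ is the only one), i.e.\ $\tx(B_L(\dy_K))=0$. Thus it suffices to show $\P(\dy_K\text{ does not point away from }C)\le\epsilon$, and by the first fact the event $\{\dy_K\text{ does not point away from }C\}$ forces every $\dy_t$, $t\le K$, not to point away from $C$.

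To bound this probability I would classify the directed edges of $B_{K+L}(\dx)$ that do not point away from $C$ into three types: (a) both endpoints on $C$; (b) $(a,b)$ with $\dist(b,C)=\dist(a,C)-1\ge 1$; (c) $(a,b)$ with $b\in C$ and $\dist(a,C)=1$. Using $d\ge3$ together with $\tx(B_{K+L}(\dx))=1$ (so that a vertex of $C$ has exactly $d-2$ neighbours off $C$, and a vertex at positive distance from $C$ has a unique neighbour closer to $C$), a one-step analysis of the non-backtracking walk gives: from type (a) the next step points away from $C$ with probability $\frac{d-2}{d-1}$, and otherwise stays in type (a); from type (b) it points away with probability $\frac{d-2}{d-1}$, and otherwise moves to type (b) with $\dist(b,C)$ decreased by $1$, or to type (c); from type (c) it points away with probability $\frac{d-3}{d-1}$ (which is $0$ when $d=3$), and otherwise moves to type (a). The structural point is that once the walk is in type (a) it never returns to type (b) or (c), so type (c) is entered at most once before the walk points away from $C$; consequently, among the at most $K$ steps taken before pointing away from $C$, all but at most one have probability $\frac{1}{d-1}$ of failing to escape, and the exceptional one (out of type (c)) has probability $\frac{2}{d-1}$. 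Solving the corresponding elementary one-step recursions shows that, started from any of the three types, the probability of still not pointing away from $C$ after $j$ steps is at most $2(d-1)^{-j}$; in particular $\P(\dy_K\text{ does not point away from }C)\le 2(d-1)^{-K}$. Since $K=\lceil\log_{d-1}(2/\epsilon)\rceil$ gives $(d-1)^K\ge 2/\epsilon$, this is at most $\epsilon$, and together with the trivial first case the lemma follows.

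I expect the main obstacle to be the degenerate case $d=3$: there a directed edge of type (c) escapes the cycle with probability $0$ at the next step, so any argument bounding the per-step escape probability below by a fixed positive constant loses a factor of order $(d-1)^{\Theta(K)}$, which is ruinous for small $\epsilon$; the remedy is precisely the observation that a type-(c) edge is entered at most once along the walk, so the per-step decay rate $(d-1)^{-1}$ is spoiled only by a single bounded multiplicative factor.
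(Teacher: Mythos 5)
Your proposal is correct and follows essentially the same route as the paper: split on $\tx(B_{K+L}(\dx))\in\{0,1\}$, observe that in the unicyclic case a single step away from the cycle $C$ forces all subsequent steps (and hence the whole $L$-ball of the endpoint, which lies inside $B_{K+L}(\dx)$) to move away from $C$, and bound the probability of never stepping away within $K$ steps by $2(d-1)^{-K}\le\epsilon$, the factor $2$ coming from the one step taken just after first reaching $C$ (your type (c)), exactly as in the paper. Your type (a)/(b)/(c) bookkeeping and the recursion are just a more explicit rendering of the paper's one-step analysis, so no substantive difference.
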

\begin{proof}
Let $H$ be the subgraph formed by the elements (directed edges) of $B_{K+L}(\dx)$, and notice that the $L$-radius neighborhoods
of all possible endpoints $\dy$ of a non-backtracking walk of length $K$ from $\dx$ are all contained in $H$. Thus, if $\tx(B_{K+L}(\dx))=0$ then clearly every such endpoint is a directed $L$-root.

Otherwise, consider the undirected underlying graph of $H$. This graph contains a single simple cycle $C$ (by the assumption that
$\tx(B_{K+L}(\dx))\leq 1$), therefore the distance of any vertex $u\in H$ from $C$ is well defined.
Let $(\vW_t)$ denote the non-backtracking random walk started at $\vW_0 = \dx$.
For some $1 \leq t < K$, write $\vW_t=(u,v)$ and $\vW_{t+1}=(v,w)$.
Crucially, we claim that if $\dist(v,C)  < \dist(w,C)$, then $\vW_j$ is a directed $L$-root for all $j\in\{t+1,\ldots,K\}$.
Indeed, our subgraph consists of a cycle $C$ with disjoint trees rooted at some of its vertices. Therefore, as soon as the non-backtracking walk makes a single step away from $C$, by definition it can only traverse further away from $C$ with each additional step (as long as it is in $H$).

Furthermore, if $v \notin C$ (that is, $v$ belongs to one of the trees rooted on $C$), then
with probability $\frac{1}{d-1}$ the distance to $C$ decreases by $1$ in $\vW_{t+1}$, otherwise it increases by $1$.
Similarly, 
$$\P\left(w \in C \mid u,v\in C\right)= 1/(d-1)~.$$
The remaining case is the \emph{single} step immediately following the first visit to the cycle $C$, if such exists,
where the probability of remaining on $C$ (traversing along one of the two possible directions on it) is $\frac{2}{d-1}$.
Altogether,
$$ \P_\dx(\vW_{K}\mbox{ is not a directed $L$-root}) \leq 2(d-1)^{-K} \leq \epsilon,$$
as required.
\end{proof}

The next two lemmas are the analogues of Lemmas \ref{lem-u-boundary-sizes} and \ref{lem-k-roots-non-intersections} for directed $K$-roots, and both follow by essentially repeating the original arguments.
\begin{lemma}\label{lem-dir-root-boundary-sizes}
Set $T = \frac{51}{100}\log_{d-1}n$ and $K = K(n)$. Then with probability $1-o(n^{-3})$,
every directed $K$-root $\dx$ satisfies $$|\partial B_t(\dx)| \geq \left(1-(d-1)^{-K}-O(n^{-1/5})\right) (d-1)^t \mbox{ for all $t \leq T$}~.$$
\end{lemma}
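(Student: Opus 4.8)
The plan is to mirror the proof of Lemma~\ref{lem-u-boundary-sizes}, adapting it to the directed-edge setting. First I would start from a uniformly random directed edge $\dx=(u,v)\in\vE$, expose $B_K(\dx)$, and condition on the event that $\dx$ is a directed $K$-root, so that this exposed neighborhood is a tree with $(d-1)^K$ points on its boundary. I would then continue the sequential pairing exploration of the configuration model, exposing the layers $\partial B_i(\dx)$ for $K< i\leq T$, and define the events $A_{i,k}$ exactly as in Lemma~\ref{lem-leq-one-bad-edge}: $A_{i,k}$ holds when the $k$-th point matched in layer $i$ lands on a vertex already present in $B_{i+1}(\dx)$. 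As in \eqref{eq-aik-stoch-dom} and its directed analogue in Lemma~\ref{lem-dir-leq-one-bad-edge}, the total number of these events with $i<T$ is stochastically dominated by $\Bin\bigl((d-1)^{T+1},(d-1)^T/n\bigr)$, whose mean is $(d-1)^{2T+1}/n = O(n^{1/50})$ since $T=\tfrac{51}{100}\log_{d-1}n$; hence with probability $1-\exp(-\Omega(n^{1/50}))$ fewer than $n^{1/25}$ of them occur, which is $O(n^{-1/5})\cdot(d-1)^t$ after accounting for the worst-case propagation factor, so the error is absorbed into the $O(n^{-1/5})$ term in the statement.

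Next I would track how each bad event depletes the boundary. A single event $A_{i,k}$ removes at most $2$ vertices from level $i+1$, hence at most $2(d-1)^{t-i-1}$ from level $t>i$, exactly as in \eqref{e:badVertexBound-1}. Because $\dx$ is a directed $K$-root, no $A_{i,k}$ with $i<K$ occurs, so the first possible bad event is at level $i=K$, contributing at most $2(d-1)^{t-K-1}$ to the deficit at level $t$; combined with the bulk bound from the previous paragraph this yields
\begin{equation*}
|\partial B_t(\dx)| \geq (d-1)^t - 2(d-1)^{t-K-1} - O(n^{-1/5})(d-1)^t \geq \left(1-(d-1)^{-K}-O(n^{-1/5})\right)(d-1)^t
\end{equation*}
for all $t\leq T$ (absorbing the constant $2$ and the shift in exponent into the $(d-1)^{-K}$ term, up to harmless constants, or more carefully keeping $2(d-1)^{-K-1}\leq (d-1)^{-K}$). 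A union bound over the at most $dn$ directed edges turns the $1-\exp(-\Omega(n^{1/50}))$ probability into $1-o(n^{-3})$ uniformly over all directed $K$-roots $\dx$.

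The one genuinely new bookkeeping point, and the step I expect to require the most care, is making sure the burn-in deficit is expressed in the correct normalization: in Lemma~\ref{lem-u-boundary-sizes} the leading term was $d(d-1)^{t-1}$ because one starts at a vertex of degree $d$, whereas here a directed edge $\dx=(u,v)$ behaves like a vertex $v$ of ``degree $d-1$'' (its back-edge to $u$ is excluded), so the leading term is $(d-1)^t$, and the deficit from the first bad event at level $K$ must be compared against this, giving the clean multiplicative factor $1-(d-1)^{-K}$ rather than an additive error. One should also confirm that $K=K(n)$ is unrestricted here: the bound is vacuous when $(d-1)^{-K}$ is close to $1$ (small $K$), and genuinely informative when $K\to\infty$, which is the only regime in which it will be applied. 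Everything else is a verbatim repetition of the calculations already carried out, so I would simply cite Lemma~\ref{lem-u-boundary-sizes} and indicate the two substitutions (degree $d\mapsto d-1$ at the root, and keeping the $(d-1)^{-K}$ term explicit instead of sending $K\to\infty$).
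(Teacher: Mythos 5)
Your overall plan—rerun the exploration argument of Lemma~\ref{lem-u-boundary-sizes} for a directed edge, with $(d-1)^t$ replacing $d(d-1)^{t-1}$—is exactly what the paper intends (it gives no separate proof and simply says to repeat that argument), but the step in which you absorb the bad events into the error term does not work, and it is the step carrying the content of the lemma. You bound the total number of events $A_{i,k}$ with $K\leq i<T$ by $n^{1/25}$ and assert this is ``$O(n^{-1/5})(d-1)^t$ after accounting for the worst-case propagation factor.'' But the worst-case factor is $2(d-1)^{t-K-1}$ (an event sitting just above the exposed tree), so this bulk bound only yields a deficit of order $n^{1/25}(d-1)^{-K}(d-1)^t$, which is $O(n^{-1/5})(d-1)^t$ only if $(d-1)^{K}\gtrsim n^{1/25+1/5}$; that fails for general $K=K(n)$ (the statement allows bounded $K$) and already for the value actually used later, $K=L\approx\frac16\log_{d-1}(dn)$ in Proposition~\ref{prop-poisson}. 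Your second paragraph implicitly assumes that only the \emph{first} bad event can occur at level $K$ while all others are covered by the bulk estimate, but nothing you prove rules out a second event at level $K$ or $K+1$, whose cost $2(d-1)^{t-K-1}$ can be absorbed neither by the $(d-1)^{-K}$ budget nor by $O(n^{-1/5})(d-1)^t$.

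What is missing is the two-scale split of Lemma~\ref{lem-u-boundary-sizes}. With $L=\lfloor\frac15\log_{d-1}n\rfloor$ (or a slightly larger threshold), the events with $i<L$ are counted exactly by $\tx(B_{L}(\dx))$, which by the directed tree-excess bound (Lemma~\ref{lem-dir-leq-one-bad-edge}) is at most $1$ for every $\dx\in\vE$ \whp; since $\dx$ is a directed $K$-root these events sit at levels $\geq K$ and contribute at most $2(d-1)^{t-K-1}\tx(B_{L}(\dx))\leq (d-1)^{t-K}$ — this is the source of the $(d-1)^{-K}$ term. Only the events with $i\geq L$ should be handled by your Chernoff count, and for those the propagation factor is at most $2(d-1)^{t-L-1}$, so their total contribution is $O\big(n^{c}(d-1)^{-L}\big)(d-1)^t$ for a small count exponent $c$, a polynomially small fraction of $(d-1)^t$ (tuning the split level and the count threshold gives the exponent $-1/5$ in the statement). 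Note also that this low-level control is exactly where the probability bookkeeping is delicate — a second low-level collision has only polynomially small probability per edge — so your closing claim that a union bound over the $dn$ edges upgrades $1-\exp(-\Omega(n^{1/50}))$ to $1-o(n^{-3})$ glosses over the one part of the argument that is not a verbatim repetition of Lemma~\ref{lem-u-boundary-sizes}.
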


\begin{lemma}\label{lem-dir-k-roots-intersections}
Let $\epsilon > 0$, $T = \frac{51}{100} \log_{d-1}n $ and $L = \lceil \frac{1}{6}\log_{d-1}n\rceil$.
With probability $1-o(n^{-3})$, any two directed $L$-roots $\dx$ and $\dy$ with $\dist(\dx,\dy) > 2L$ satisfy
$$\left|B_t(\dx) \cap B_t(\dy)\right| < n^{-1/7}(d-1)^t ~\mbox{ for all $t\leq T$}~.$$
\end{lemma}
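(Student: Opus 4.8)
The plan is to mirror the proof of Lemma~\ref{lem-k-roots-non-intersections}, but now tracking directed edges and using the configuration-model exploration simultaneously from $\dx$ and $\dy$. First I would pick $\dx,\dy\in\vE$ uniformly at random and expose their $L$-radius neighborhoods; by Lemma~\ref{lem-dir-leq-one-bad-edge} we may condition on both being directed $L$-roots, and we may further condition on $\dist(\dx,\dy)>2L$ (the pairs failing these conditions contribute a negligible probability, to be absorbed at the end via a union bound over all $O((dn)^2)$ ordered pairs). Having exposed $B_L(\dx)$ and $B_L(\dy)$ as disjoint trees, I would then sequentially reveal the union layers $\partial\widetilde B_i \deq \{\dir w\in\vE : \dist(\dir w,\{\dx,\dy\})=i\}$ for $L<i\le T$, exactly as in Lemma~\ref{lem-k-roots-non-intersections}, except that the non-backtracking distance replaces the graph distance and a directed edge at distance $i$ spawns $d-1$ rather than $d$ children.

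The key quantitative step is to control the number of ``collisions'', i.e., the events $A_{i,k}$ that a newly matched point in $\partial\widetilde B_i$ lands on an edge already exposed. As in \eqref{eq-srw-bad-edge-i-k}–\eqref{eq-aik-stoch-dom}, each such event has conditional probability $\le (d-1)^{i}/\big((1-o(1))dn\big)$, so the total collision count over $i\le T$ is stochastically dominated by $\Bin\big((d-1)^{T+1},(d-1)^{T}/n\big)$, whose mean is $O((d-1)^{2T}/n)=O(n^{2\cdot\frac{51}{100}-1})=O(n^{1/50})$; a Chernoff bound then shows that fewer than, say, $n^{1/25}$ collisions occur except with probability $\exp(-\Omega(n^{1/25}))=o(n^{-3})$. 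On this event the exploration from $\{\dx,\dy\}$ behaves like a tree: $|\partial\widetilde B_t| = (2-o(1))(d-1)^t$ for all $t\le T$ (this is essentially Lemma~\ref{lem-dir-root-boundary-sizes} applied to the two roots together, the $(d-1)^{-K}$-type loss being absorbed since $L\to\infty$). Crucially, since $\dist(\dx,\dy)>2L$, the non-backtracking balls $B_t(\dx)$ and $B_t(\dy)$ are \emph{disjoint} subtrees of this common exploration for every $t\le L$; and for $L<t\le T$, any directed edge lying in both $B_t(\dx)$ and $B_t(\dy)$ forces the two trees to have ``met'', which can only have happened through one of the few collision events. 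Each collision merges at most two branches, and such a branch at depth $i$ contributes at most $2(d-1)^{t-i}$ directed edges to level $t$; summing over the $\le n^{1/25}$ collisions (all necessarily at depth $\ge L$, hence $\le n^{1/25}\cdot 2(d-1)^{t-L}$) gives $|B_t(\dx)\cap B_t(\dy)| \le O\big(n^{1/25}(d-1)^{t-L}\big) = O\big(n^{1/25-1/6}(d-1)^t\big) = o\big(n^{-1/7}(d-1)^t\big)$, which is the claimed bound.

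Finally I would take a union bound over all ordered pairs of directed edges in $\vE$ (there are $(dn)^2$ of them, and $dn=n^{1+o(1)}$, which is swamped by the $o(n^{-3})$ failure probability), and translate the statement back to $\G(n,d)$ via the configuration model and \eqref{eq-p-simple}. One subtlety to handle carefully is that the non-backtracking distance $\dist(\cdot,\cdot)$ is not symmetric, so ``$B_t(\dx)\cap B_t(\dy)$'' involves out-neighborhoods of $\dx$ and of $\dy$; one checks that the exploration of $B_t(\dx)$ (resp. $B_t(\dy)$) in the underlying graph is still the tree of non-backtracking paths emanating from $\dx$ (resp. $\dy$), so the disjointness and collision arguments go through verbatim. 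The main obstacle I anticipate is \emph{bookkeeping} rather than a new idea: one must be precise about how a single collision in the joint exploration propagates to an intersection of the two directed balls at later depths, and verify that a collision between two edges both already in $B_t(\dx)$ (not involving $B_t(\dy)$ at all) does not spuriously create an intersection — it does not, because such an event stays within one of the two subtrees. With the exponents chosen as in the statement ($T=\tfrac{51}{100}\log_{d-1}n$, $L=\lceil\tfrac16\log_{d-1}n\rceil$), all the numerical inequalities $2T-1<1$, $1/25-1/6<-1/7$ hold with room to spare, so no delicate optimization is needed.
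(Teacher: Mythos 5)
Your overall strategy is exactly the one the paper intends: it gives no separate proof of this lemma, stating only that it ``follows by essentially repeating the original arguments'' of Lemmas~\ref{lem-u-boundary-sizes} and \ref{lem-k-roots-non-intersections}, and your joint exploration from $\{\dx,\dy\}$ with a binomial bound on collisions is precisely that adaptation. However, your final numerical step is wrong as written. You allow up to $n^{1/25}$ collisions and then bound the intersection by $O\big(n^{1/25}(d-1)^{t-L}\big)=O\big(n^{1/25-1/6}(d-1)^t\big)$, claiming $1/25-1/6<-1/7$. In fact $1/25-1/6=-19/150\approx-0.127$ while $-1/7\approx-0.143$, so $n^{1/25-1/6}(d-1)^t$ is polynomially \emph{larger} than $n^{-1/7}(d-1)^t$, and the claimed conclusion does not follow from your bound. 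The inequality does not hold ``with room to spare''; it fails.

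The gap is repairable, but only because you were unnecessarily generous with the collision threshold. The dominating binomial has mean $O\big((d-1)^{2T}/n\big)=O(n^{1/50})$, so take the threshold to be, say, twice the mean: Chernoff gives that the number of collisions exceeds $Cn^{1/50}$ with probability $\exp\big(-\Omega(n^{1/50})\big)=o(n^{-3})$, which is still acceptable. Then, summing the contribution of each collision over all levels up to $t$ (a geometric factor of $O(1)$ for fixed $d$), the intersection is $O\big(n^{1/50}(d-1)^{t-L}\big)=O\big(n^{1/50-1/6}(d-1)^t\big)=O\big(n^{-11/75}(d-1)^t\big)$, and since $11/75-1/7=2/525>0$ this is indeed $o\big(n^{-1/7}(d-1)^t\big)$ --- but note the margin is only $n^{2/525}$, so the exponents $\frac{51}{100}$, $\frac16$, $\frac17$ are chosen to just barely fit and you cannot afford the slack you gave yourself. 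With that correction (and your acknowledged bookkeeping about the asymmetry of the directed distance, which the paper also leaves implicit), the argument goes through.
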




We now turn to prove the Poissonization argument, on which the entire proof of Theorem \ref{thm-nbrw} hinges.
Recall that in Theorem \ref{thm-rw} we could afford a relatively large (order $\log\log n$) error, which enabled us to apply standard large deviation arguments for the size of cuts
between certain neighborhoods of two vertices $u,v$ (as studied in Lemma \ref{lem-k-roots-intersections}). On the other hand, here we can only afford an $O(1)$ error, so the number of paths of length the mixing time between two random vertices will approximately be a Poisson random variable with constant mean.
In order to bypass this obstacle and derive the concentration results needed for proving cutoff, we instead consider the joint distribution of $u$ and vertices $v_1,\ldots,v_M$ for some large (poly-logarithmic) $M$. This approach, incorporated in the next proposition, amplifies the error probabilities as required.
\begin{proposition}\label{prop-poisson}
Let $\epsilon > 0$, set
\begin{align*}
K = \lceil2\log_{d-1}(1/\epsilon)\rceil~, \quad T = \lceil\log_{d-1} (dn)\rceil ~, \quad\mu = (d-1)^{T+K}/dn~,
\end{align*}
and for each $\dx\in\vE$, define the random variable $Z=Z(\dx)$ by
\begin{align*}
  \P(Z = k) = \frac{1}{dn}\left|\left\{ \dy \in \vE : \cP_{T+K-1}(\dx,\dy) =k \right\} \right|~,
\end{align*}
where $\cP_\ell(\dx,\dy)$ is the number of $\ell$-long non-backtracking paths from $\dx$ to $\dy$.
Then \whp, every $\dx$ that is a directed $L$-root for $L=\lceil\frac16\log_{d-1}(dn)\rceil$ satisfies
$$\E \left[\left| (Z(\dx)/\mu)-1\right|\given \cF_G \right]<2\epsilon+\frac{5}{\log\log n}~,$$
where $\cF_G$ is the $\sigma$-field generated by the graph $G\sim \mathcal{G}(n,d)$.
\end{proposition}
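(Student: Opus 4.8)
The plan is to show that $Z(\dx)$ is, conditionally on $\cF_G$ and for a typical directed $L$-root $\dx$, approximately a Poisson random variable with mean $\mu=(d-1)^{T+K}/dn$, and then to deduce the $L^1$-bound on $Z/\mu-1$ from the explicit $L^1$-distance between $\Po(\mu)$ and the constant $\mu$. The quantity $Z(\dx)$ counts, for $\dy$ chosen uniformly among the $dn$ directed edges, the number of non-backtracking paths of length $T+K-1$ from $\dx$ to $\dy$; equivalently, working in the cover tree $\cT$ of $G$ rooted at the head of $\dx$, this is the number of vertices $w$ at distance $T+K-1$ from the root with $\phi(w)=$ (the head of) $\dy$, averaged over $\dy$. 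The total number of such $w$ is $(d-1)^{T+K-1}$, so $\E[Z\given\cF_G]=(d-1)^{T+K-1}/dn\cdot d=\mu$ (up to the $\tx(B_L)=0$ adjustment, which is where the $(d-1)^{-K}$ and $n^{-1/5}$ corrections of Lemma~\ref{lem-dir-root-boundary-sizes} enter), and the heart of the matter is controlling the \emph{concentration} of $Z$ about $\mu$.

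First I would set up the configuration-model exposure: fix a directed $L$-root $\dx$, expose $B_L(\dx)$ (a tree), and then continue exposing the non-backtracking neighborhood of $\dx$ out to radius roughly $T/2$ or $(T+K-1)/2$, and simultaneously expose the neighborhoods of the $M$ target edges $\dy_1,\ldots,\dy_M$ (with $M$ poly-logarithmic in $n$) out to comparable radius, conditioning on all the $\dy_j$ being directed $L$-roots pairwise far from each other and from $\dx$. By Lemmas~\ref{lem-dir-leq-one-bad-edge}, \ref{lem-dir-root-boundary-sizes} and \ref{lem-dir-k-roots-intersections}, with probability $1-o(n^{-3})$ (hence, by a union bound over the $M$ choices, still whp) the boundary $|\partial B_{t}(\dx)|$ is $(1-(d-1)^{-K}-O(n^{-1/5}))(d-1)^t$ and the pairwise intersections $|B_t(\dx)\cap B_t(\dy_j)|$ are $o((d-1)^t)$ throughout this range. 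Then, matching the $\approx(d-1)^{T_u}$ dangling half-edges on $\partial B_{T_u}(\dx)$ to the $\approx(d-1)^{T_v}$ dangling half-edges on $\partial B_{T_v}(\dy_j)$ — where $T_u+T_v=T+K-1$ — the number $\cP_{T+K-1}(\dx,\dy_j)$ of resulting paths is, by the sequential-matching argument of Lemma~\ref{lem-k-roots-intersections} now run with a \emph{constant} (not growing) target mean, shown to converge to $\Po(\mu)$: the number of matched pairs is a sum of nearly-independent indicators, each of probability $\approx(d-1)^{T_v}/dn$, and the second moment / Stein–Chen bound gives total-variation distance $O((d-1)^{T}/(dn)^2\cdot(\dots))=O(1/\log\log n)$ or similar. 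The key point of introducing the $M$ targets is that the bad event ``$Z(\dx)$ is atypical'' translates into ``an atypical fraction of the $\dy_j$ have atypical path counts'', and since these are essentially independent across $j$ (the neighborhoods being disjoint), a Chernoff/martingale bound over $j$ amplifies a per-edge failure probability that is only $o(1)$ into one that is $o(n^{-1})$, sufficient for the whp claim over all directed $L$-roots $\dx$.

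Once the conditional law of $Z(\dx)$ is within total-variation $O(1/\log\log n)$ (after averaging over $\dy$ this is the same as saying the empirical distribution of $\{\cP_{T+K-1}(\dx,\dy)\}_{\dy}$ is close in $L^1$ to $\Po(\mu)$) of $\Po(\mu)$, the final bound follows from
$$\E\big[|Z/\mu-1|\big] \leq \E\big[|\Po(\mu)/\mu-1|\big] + 2\,\|{\rm law}(Z)-\Po(\mu)\|_{\rm TV}\cdot(\text{bound on }|k/\mu-1|\text{ over the relevant range}),$$
together with the elementary estimate $\E|\Po(\mu)/\mu-1|\leq \sqrt{\var(\Po(\mu))}/\mu = 1/\sqrt\mu$ — but here one must be careful: $\mu=(d-1)^{T+K}/dn$ with $T=\lceil\log_{d-1}(dn)\rceil$ and $K=\lceil2\log_{d-1}(1/\epsilon)\rceil$, so $\mu\in[(d-1)^{-1}\epsilon^{-2},\epsilon^{-2}]$ roughly, which is \emph{large} when $\epsilon$ is small, and then $\E|\Po(\mu)/\mu-1|\leq 1/\sqrt\mu\leq\sqrt{d-1}\,\epsilon<2\epsilon$ (for $d\geq3$, after absorbing constants, or with a slightly more careful Poisson $L^1$ computation). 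Combining the Poisson $L^1$ term ($<2\epsilon$) with the total-variation error term ($<5/\log\log n$, after bounding $|k/\mu-1|$ on the range $k=O(\log^2 n)$ where $Z$ concentrates and noting $\mu=\Theta(1)$) yields the claimed bound $\E[|Z/\mu-1|\mid\cF_G]<2\epsilon+5/\log\log n$.

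The main obstacle I anticipate is the Poissonization step itself with a \emph{constant}-mean target: the large-deviation toolkit used in Section~\ref{sec:rw} is far too crude here (it only gives errors $o(1)$, not an $O(1/\log\log n)$ control on the \emph{law}), so one genuinely needs a Stein–Chen / coupling argument for the matching process, carefully tracking (i) the small bias in the boundary sizes from the $\tx(B_L)=0$ conditioning — this is exactly the source of the $(d-1)^{-K}$ term, which is $\leq\epsilon^2/(d-1)$ and hence harmless, (ii) the dependence between successive matchings in the configuration model (handled by the "at least $|A_v|-n^{1/4}$ remaining targets" argument, now needing a two-sided version), and (iii) the amplification over the $M$ targets, where one must verify that conditioning on $\dy_1,\ldots,\dy_{j-1}$ being processed does not meaningfully distort the distribution of $\cP(\dx,\dy_j)$ — i.e. that the exposed neighborhoods remain a negligible fraction of the $dn$ points. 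Steps (i)–(iii) are where the real work lies; the reduction to the Poisson $L^1$ estimate at the end is routine.
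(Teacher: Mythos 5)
Your overall scaffolding (configuration-model exposure of $B(\dx)$ and of $M$ sampled targets, binomial counts for edges between exposed boundaries, and concentration over the $M$ targets to push the failure probability down to $o(n^{-2})$ so that a union/first-moment argument over $\dx$ works) matches the paper's. But there is a genuine gap at the heart of your plan: you identify the number of matched half-edge pairs between $\partial B_{T_1}(\dx)$ and $\partial B_{T_2}(\dy_j)$ with $\cP_{T+K-1}(\dx,\dy_j)$ and then propose a Stein--Chen total-variation approximation of the \emph{law} of $Z(\dx)$ by $\Po(\mu)$. The matched-pair count is only a \emph{lower} bound on the path count: non-backtracking paths of length $T+K-1$ that pass through the (unexposed or intersecting) parts of the neighborhoods, reach a boundary vertex by more than one route, or leave and re-enter the exposed balls are not captured, and nothing in your construction bounds this excess per target $\dy_j$. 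Without a per-edge \emph{upper} bound you cannot get a two-sided (TV) Poisson approximation, and your final reduction also needs it twice more: once to bound $|k/\mu-1|$ on the range where $Z$ lives (a priori $Z_\dy/\mu$ can be as large as order $n$), and once because your interval for $\mu$ is reversed --- since $T$ and $K$ are ceilings, $\mu=(d-1)^{T+K}/dn\ge\epsilon^{-2}$, so $1/\sqrt{\mu}\le\epsilon$ with no $\sqrt{d-1}$ loss; had it gone the other way, the stated constant $2\epsilon$ would fail for large $d$.

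The paper's proof is designed precisely to avoid the two-sided estimate you acknowledge needing but do not supply. It only ever uses the one-sided coupling $\tilde Z_{\dy_i}\le Z_{\dy_i}$ with $R^-_i\le\tilde Z_{\dy_i}\le R^+_i$ binomial, and disposes of the uncontrolled excess $Z_{\dy_i}-\tilde Z_{\dy_i}\ge 0$ by a triangle-inequality rearrangement that trades it for the sum $\sum_{\dy\in\cB}Z_\dy/\mu$; after partitioning all of $\vE$ into $O(n/\log^2 n)$ bins and summing, that sum is evaluated \emph{exactly} by the deterministic identity $\sum_{\dy}\cP_{T+K-1}(\dx,\dy)=(d-1)^{T+K}=\mu\,dn$, which is what restores the ``$+1$'' and yields $2\epsilon+5/\log\log n$. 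So the conservation law over all $dn$ targets, together with the explicit binning of $\vE$ (which you leave implicit), is the missing idea: either adopt it, or you must genuinely prove a per-target upper bound on the number of non-canonical paths, which is a substantially harder statement than anything your sketch provides.
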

\begin{proof}
Condition on the statement of Lemma \ref{lem-dir-leq-one-bad-edge} for the choices $r(n)=L$ and $h(n)=\log n$. That is,
we assume that there are at least $dn - (\log n) n^{1/3}$ directed $L$-roots in $\vE$.

Let $\dx$ be a uniformly chosen directed edge, and expose its $L$-radius neighborhood according to the configuration model. As the statement of the proposition only refers to directed $L$-roots, we may at this point assume that $\dx$ is indeed such an edge (recall that the property of being a directed $L$-root is solely determined by the structure of the induced subgraph on $B_L(\dx)$, and thus this conditioning does not affect the distribution of the future pairings). With this assumption in mind, continue exposing the neighborhood of $\dx$ to obtain $B_{2L}(\dx)$.

Our goal is to show that
$$\P\left(\E \left[\left|(Z(\dx)/\mu)-1\right|\given\cF_G\right]\geq2\epsilon+\mbox{$\frac{5}{\log\log n}$}\right) = o(1/n)~,$$
in which case a first moment argument will immediately complete the proof of the proposition.

We next consider a uniformly chosen set of $M$ directed edges, $\cB \subset \vE$,
for some $\log^2 n \leq M \leq 2\log^2 n$ (to be specified later), by selecting its elements one by one. That is, after $i$ steps ($0 \leq i <M$), $|\cB|=i$ and we add a directed edge uniformly chosen over the $dn-i$ remaining elements of $\vE$. With the addition of every new element, we also develop its $2L$-radius neighborhood.

Notice that, after $i$ steps, there are at most $(\log n) n^{1/3}$ directed edges which are \emph{not} directed $L$-roots in $\vE$, and furthermore,
$$ \left|B_{2L}(\dx) \cup \left(\cup_{\dy\in\cB}B_{2L}(\dy)\right)\right| \leq (i+1) n^{1/3} \leq M n^{1/3}~.$$
Therefore, the probability that the $(i+1)$-th element of $\cB$ either belongs to one of the existing $2L$-radius neighborhoods, or is \emph{not} a directed $L$-root, is at most $2 M n^{-2/3}$.
Clearly, the probability that $4$ such ``bad'' edges are selected is at most
$O(M^4 n^{-8/3}) = o(n^{-2})$.

Altogether, we may assume with probability $1-o(n^{-2})$, the set $\cB$ contains a subset $\cB'=\{\dy_1,\ldots,\dy_{M'}\}$ of size $M' \geq M-3$, such that the following holds:
\begin{enumerate}[(i)]
\item Every member of $\{\dx\} \cup \cB'$ is an $L$-root.
\item The pairwise distances of $\{\dx\}\cup \cB'$ all exceed $2L$.
\end{enumerate}

For any $\dy\in\vE$, let $Z_\dy = \cP_{T+K-1}(\dx,\dy)$, and for any $S\subset \vE$, let $Z_S$ be the random variable that accepts the value $Z_\dy$ with probability $1/|S|$ for each $\dy\in S$. We will use an averaging argument to show that $Z$ can be well approximated by $Z_\cB$, which in turn is well approximated by $Z_{\cB'}$.

Setting
\begin{equation*}
  T_1 = \lfloor (T+K)/2 \rfloor ~,~ T_2 = \lceil (T+K)/2 \rceil - 2 ~,~
\end{equation*}
we wish to develop the $T_1$-radius neighborhood of $\dx$ as well as the $T_2$-radius neighborhoods of every $\dy\in\cB'$. To this end, put
\begin{align*}
 U &\deq \partial B_{T_1}(\dx)~, & &V_i \deq \partial B_{T_2}(\dy_i)~,\\
 \tilde{U} &\deq U \setminus \cup_i B_{T_2}(\dy_i)~, &&\tilde{V}_i \deq V_i \setminus \left(B_{T_1}(\dx)\cup \left(\cup_{j\neq i} B_{T_2}(\dy_j)\right)\right)~.
\end{align*}
Recalling Lemma \ref{lem-dir-root-boundary-sizes} (and the fact that $\{\dx\}\cup \cB'$ are all directed $L$-roots), with probability $1-o(n^{-3})$ we have
\begin{align*}
  |U| &\geq \left(1-O(n^{-\frac15})\right)(d-1)^{T_1}~,\\
  |V_i| &\geq \left(1-O(n^{-\frac15})\right)(d-1)^{T_2}~\mbox{ for all $i\in[M']$~.}
\end{align*}
Combining this with Lemma \ref{lem-dir-k-roots-intersections}, we deduce that
for any sufficiently large $n$ the following holds with probability $1-o(n^{-3})$:
\begin{align*}
\left(1-2n^{-\frac17}\right)(d-1)^{T_1} &\leq |\tilde{U}| \leq (d-1)^{T_1}~,\\
\left(1-2n^{-\frac17}\right)(d-1)^{T_2} &\leq |\tilde{V}_i| \leq (d-1)^{T_2}~\mbox{ for all $i\in[M']$~.}
\end{align*}
We will use a standard Poissonization approach in order to approximate the joint distribution of the variables
$\{Z_\dy : \dy\in\cB'\}$ (that are fully determined by the graph $G$) using the following set of variables:
$$\tilde{Z}_{\dy_i} \deq \left|\{ u,v\in E: u\in \tilde{U},~v\in \tilde{V}_i\}\right|\quad(i\in[M'])~.$$
We claim that $\tilde{Z}_{\dy_i} \leq Z_{\dy_i}$ for all $i$. To see this, recall that $Z_{\dy_i}$ counts the number of non-backtracking paths of length $T+K-1$ from $\dx$ to $\dy_i$. Since $\tilde{U}$ and $\tilde{V}_i$ are disjoint subsets of the boundaries of the $T_1$-radius neighborhood $U$ and the $T_2$-radius neighborhood $V_i$ respectively, every edge between them corresponds to at least one distinct such path of length $T_1+T_2+1=T+K-1$.
Therefore, by the triangle inequality,
\begin{align}
M \E \bigg[\Big|\frac{Z_\cB}{\mu}-1\Big|&\given\cF_G\bigg] = \sum_{\dy\in\cB}  \Big| \frac{Z_{\dy}}{\mu}-1  \Big|\leq  \sum_{i=1}^{M'}  \Big| \frac{Z_{\dy_i}}{\mu}-1  \Big| + \sum_{\dy\in \cB\setminus \cB'}\frac{Z_\dy}{\mu}
+\left|\cB\setminus\cB'\right| \nonumber\\
&\leq \sum_{i=1}^{M'} \Big(\Big|\frac{\tilde{Z}_{\dy_i}}{\mu}-1\Big|+ \frac{Z_{\dy_i}-\tilde{Z}_{\dy_i}}{\mu}\Big)+\sum_{\dy\in \cB\setminus \cB'}\frac{Z_\dy}{\mu}
+3 \nonumber\\
&= \sum_{i=1}^{M'}  \Big|\frac{\tilde{Z}_{\dy_i}}{\mu}-1\Big|- \sum_{i=1}^{M'} \frac{\tilde{Z}_{\dy_i}}{\mu}+\sum_{\dy\in \cB}\frac{ Z_\dy}{\mu}
+3~. \label{eq-E-ZB-bound-1}
\end{align}
Let $\tilde{\mathcal{Z}}$ denote the first summand in the last expression:
$$ \tilde{\mathcal{Z}} \deq \sum_{i=1}^{M'} |(\tilde{Z}_{\dy_i}/\mu)-1|~.$$
The following lemma estimates $\mathcal{Z}$, as well as the second summand in \eqref{eq-E-ZB-bound-1}.
\begin{lemma}\label{lem-cZ-bound}
Define $\tilde{\mathcal{Z}}$ and $\tilde{Z}_{\dy_i}$ for $i=1,\ldots,M'$ as above. Then:
\begin{equation}
  \label{eq-tilde-Z-upper-bound}
  \P\left(\tilde{\mathcal{Z}}  > \epsilon + \mbox{$\frac{4}{\log\log n}$}\right) = o(n^{-2})\,,
\end{equation}
and
\begin{equation}\label{eq-tilde-Z-lower-bound} \P\left(\frac1{M}\sum_{i=1}^{M'}\frac{\tilde{Z}_{\dy_i}}{\mu} \leq 1-\epsilon - \mbox{$\frac{1}{\log\log n}$}\right)
= o(n^{-2})~.
\end{equation}
\end{lemma}
\begin{proof}
We claim that, with probability $1-o(n^{-2})$, each of the variables $\tilde{Z}_{\dy_i}$ is stochastically dominated from below and from above by i.i.d.\ pairs of binomial variables, $R^-_i \leq R^+_i$ (coupled in the obvious manner), defined as:
\begin{align*}
 R^-_i &\sim \Bin\left((1-n^{-\frac18})(d-1)^{T_2+1},p^- \right),&&p^- \deq (1-n^{-\frac18})\frac{(d-1)^{T_1+1}}{dn}~,\\
   R^+_i &\sim \Bin\left((d-1)^{T_2+1},p^+\right)~,&&p^+\deq (1+n^{-\frac14})\frac{(d-1)^{T_1+1}}{dn}~,\\
   \Delta_i &\deq R^+_i - R^-_i \geq 0~.
\end{align*}
To see this, consider the configuration model at the starting phase where the vertices in $\tilde{U}\cup(\cup_i\tilde{V}_i)$
all have degree $1$ (that is, each of these vertices comprise $(d-1)$ points that still wait to be paired), and expose
the pairings of the points in $\tilde{V}_i$ sequentially. Suppose that for all $j<i$ we have already constructed a coupling
where $R^-_j \leq \tilde{Z}_{\dy_j} \leq R^+_j$, and next wish to do the same for $\tilde{Z}_{\dy_i}$.

By Lemma \ref{lem-dir-k-roots-intersections}, with probability $1-o(n^{-3})$ there still remain at least $(1-n^{-1/8})(d-1)^{T_2}$ vertices of degree 1 in $\tilde{V}_i$ and at least $(1-n^{-1/8})(d-1)^{T_1}$ such vertices in $\tilde{U}$ (otherwise
the intersection of either $B(\dy_i)$ or $B(\dx)$ with one of $B(\dy_1),\ldots,B(\dy_{i-1})$ would contain at least $n^{-1/7}(d-1)^{T_1}$ vertices). We thus have at least $(1-n^{-1/8})(d-1)^{T_2+1}$ unmatched points corresponding to $\tilde{V}_i$,
and at least $(1-n^{-1/8})(d-1)^{T_1+1}$ unmatched points corresponding to $\tilde{U}$. Associating each such point corresponding to $\tilde{V}_i$ with a Bernoulli variable, which succeeds if and only if it is matched to $\tilde{U}$, clearly establishes
the coupling of $\tilde{Z}_{\dy_i} \geq R^-_i$.

Conversely, $\tilde{V}_i \leq (d-1)^{T_2}$ and $\tilde{U}\leq (d-1)^{T_1}$, hence there are at most $(d-1)^{T_2+1}$ unmatched points
corresponding to $\tilde{V}_i$ and at most $(d-1)^{T_1+1}$ unmatched points corresponding to $\tilde{U}$. Since both the $T_1$-radius and the $T_2$-radius neighborhoods of any element contains $O(\sqrt{n})$ distinct vertices, the probability of a point corresponding to $\tilde{V}_i$ being matched to $\tilde{U}$ is at most
$$\frac{(d-1)^{T_1+1}}{dn - O(M \sqrt{n})} \leq \frac{(d-1)^{T_1+1}}{(1- o(n^{-1/4}))dn}~.$$
Therefore, we can readily construct the coupling $\tilde{Z}_{\dy_i} \leq R^+_i$.

Since it was possible to construct each of the above couplings with probability $1-o(n^{-3})$, clearly all $M'$ variables can be coupled as above with probability $1-o(n^{-2})$.


Finally, consider a set of i.i.d.\ binomial random variables $Q_i$ with means $\E Q_1 = \mu = (d-1)^{T+K}/dn$, defined by
$$Q_i \sim \Bin\Big((d-1)^{T_2+1},\frac{(d-1)^{T_1+1}}{dn}\Big)~,$$
and coupled in the obvious manner such that $R^-_i \leq Q_i \leq R^+_i$. Clearly, as
$|\tilde{Z}_{\dy_i}- Q_i| \leq R^+_i - R^-_i = \Delta_i$, it follows that
\begin{equation}\label{eq-tilde-Z-bound}
\tilde{\mathcal{Z}}= \frac{1}{M'}\sum_{i=1}^{M'} \Big|\frac{\tilde{Z}_{v_i}}{\mu}-1\Big| \leq
\frac{1}{M'}\sum_{i=1}^{M'} \Big|\frac{Q_i}{\mu}-1\Big| +
\frac{1}{M'}\sum_{i=1}^{M'} \frac{\Delta_i}{\mu}~.
\end{equation}
Since $\mu\geq  (d-1)^{K} \geq 1/\epsilon^2$, for all $i\in[M']$ we have
\begin{align*}
&\E \Big| \frac{Q_i}{\mu}-1\Big| \leq \frac{1}{\mu}\sqrt{\var(Q_i)} = \frac{1+O(n^{-\frac14})}{\sqrt{\mu}}
\leq \left(1+\frac1{\log n}\right)\epsilon~,\\
&\frac{\E\Delta_i}{\mu} \leq (1-n^{-\frac1{8}})\left( n^{-\frac14}+n^{-\frac{1}{8}}\right) + n^{-\frac{1}{8}}\left(1+n^{-\frac14}\right) = O\left(n^{-\frac14}\right)~.
\end{align*}
where the last inequalities in both estimates hold for any sufficiently large $n$.
Furthermore, since the $\{Q_i\}$-s are i.i.d.\ binomial variables, Chernoff's inequality implies that
\begin{align}
\P\Big( \frac{1}{M'}\sum_{i=1}^{M'} \frac{Q_i}{\mu} > 1+\mbox{$\frac1{\log\log n}$}\Big) &< \mathrm{e}^{- \frac{\mu M'}{4(\log\log n)^2}}= \mathrm{e}^{-\Omega\left((\frac{\log n}{\log\log n})^2\right)} = o(n^{-2})~,\label{eq-Qi-bound}
\end{align}
and an analogous argument for the $\{\Delta_i\}$-s (recall that by definition, we have $\Delta_i = \Delta_i'+\Delta_i''$, where the $\{\Delta_i'\}$-s and $\{\Delta_i''\}$-s are two sequences of i.i.d.\ binomial variables, independent of each other), combined with the fact that $\E \Delta_i / \mu = O\left(n^{-1/4}\right)$, gives
\begin{equation}   \label{eq-Deltai-bound}  \P\Big(\frac1{M'}\sum_{i=1}^{M'}\frac{\Delta_i}{\mu}  > \mbox{$\frac1{\log\log n}$}\Big)
  \leq \mathrm{e}^{-\Omega\left((\frac{\log n}{\log\log n})^2\right)}
  = o(n^{-2})~.
\end{equation}
Define
$$X_t \deq \sum_{i=1}^t\Big|\frac{Q_i}{\mu}-1\Big|-\Big(\frac{Q_i}{\mu}-1\Big)-\E\Big|\frac{Q_i}{\mu}-1\Big|~.$$
Since $\E\left|(Q_i/\mu)-1\right| \leq (1+\frac{1}{\log n})\epsilon < 2$ for large $n$ (with room to spare), we deduce that $X_t$ is a martingale with bounded increments:
$$\left|X_{t+1}-X_t\right| \leq 2 + \E\Big|\frac{Q_i}{\mu}-1\Big| \leq 4~.$$
Therefore, Azuma's inequality (cf., e.g., \cite{AS}*{Chapter 7.2}) implies that
\begin{equation}
  \label{eq-Xi-martingale-bound}
  \P\left(X_{M'} / M' > \mbox{$\frac1{\log\log n}$}\right) < \mathrm{e}^{-\frac12 M'/(4\log\log n)^2} = o(n^{-2})~.
\end{equation}
Since $\E\left|(Q_1/\mu)-1\right| < (1+\frac{1}{\log n})\epsilon$ and
$$ \frac{1}{M'}\sum_{i=1}^{M'}\Big|\frac{Q_i}{\mu}-1\Big| = \E\Big|\frac{Q_1}{\mu}-1\Big| + (X_{M'}/M')+ \frac{1}{M'}\sum_{i=1}^{M'}\Big(\frac{Q_i}{\mu}-1\Big)~,$$
the bounds in \eqref{eq-Qi-bound} and \eqref{eq-Xi-martingale-bound} now imply that
\begin{align*}
\P\Big(\frac1{M'}\sum_{i=1}^{M'}\Big| \frac{Q_i}{\mu}-1\Big| > \epsilon+\mbox{$\frac3{\log\log n}$}\Big)= o(n^{-2})~.
\end{align*}
Together with \eqref{eq-tilde-Z-bound} and \eqref{eq-Deltai-bound}, we obtain that \eqref{eq-tilde-Z-upper-bound} indeed holds.

Similarly, since $\tilde{Z}_{\dy_i} \geq R^-_i$ for all $i$, and the $\{R^-_i\}$-s are i.i.d.\ binomial variables
with $\E R^-_i \geq (1-\epsilon-3n^{-1/8})\mu$, we can apply Chernoff's inequality to derive a lower bound on $\sum_{i=1}^{M'} (\tilde{Z}_{\dy_i}/\mu)$. Keeping in mind that
$$\frac1M \sum_{i=1}^{M'}\frac{\tilde{Z}_{\dy_i}}{\mu} \geq \left(1-\frac3M\right)\sum_{i=1}^{M'}\frac{\tilde{Z}_{\dy_i}}{\mu} ~,$$
we obtain that \eqref{eq-tilde-Z-lower-bound} holds, as
\begin{equation*}
\P\left(\frac1{M}\sum_{i=1}^{M'}\frac{\tilde{Z}_{\dy_i}}{\mu} \leq 1-\epsilon - \mbox{$\frac{1}{\log\log n}$}\right)
\leq \mathrm{e}^{-\Omega\left((\frac{\log n}{\log\log n})^2\right)} = o(n^{-2})~.
\end{equation*}
This completes the proof of Lemma~\ref{lem-cZ-bound}.
\end{proof}
We can now combine \eqref{eq-tilde-Z-upper-bound} and \eqref{eq-tilde-Z-lower-bound} with \eqref{eq-E-ZB-bound-1},
and deduce that the following statement holds with probability $1-o(n^{-2})$:
\begin{align}
\E \bigg[\Big|\frac{Z_\cB}{\mu}-1\Big|&\given\cF_G\bigg] \leq 2\epsilon - 1 + \mbox{$\frac{5}{\log\log n}$}
+\frac1M\sum_{\dy\in \cB}\frac{ Z_\dy}{\mu} ~. \label{eq-E-ZB-bound-2}
\end{align}
To transform the above into the required bound on $Z$, take $M = \lceil \log^2 n \rceil$,
and consider a collection of bins, each of size either $M$ or $M+1$, such that the total of their sizes is $d n$.
Let $\cB'_{1},\ldots,\cB'_{\ell_1}$ denote the $M$-element bins, and let $\cB''_{1},\ldots,\cB''_{\ell_2}$
denote the $(M+1)$-element bins. Next, randomly partition the elements of $\vE$ into these bins (i.e.,
each bin $\cB$ will contain a uniformly chosen set of $|\cB|$ directed edges).

Since there are at most $\lfloor dn/M\rfloor = O(n/M)$ different bins, and for each bin the corresponding $Z_{\cB}$
satisfies \eqref{eq-E-ZB-bound-2} with probability at least $1-o(n^{-2})$, we deduce that all the variables $Z_{\cB'_{j}}$
and $Z_{\cB''_{j}}$ satisfy this
inequality with probability at least $1-o(1/n)$. Therefore, with probability at least $1-o(1/n)$,
\begin{align*}
  \E \bigg[\Big| \frac{Z}{\mu} - 1 \Big|&\given\cF_G\bigg] =
  \frac{1}{dn}\sum_{\dy\in \vE}\Big|\frac{Z_\dy}{\mu}-1\Big| \\
  &= \frac{M}{dn}\sum_{j=1}^{\ell_1}\E\bigg[\Big| \frac{Z_{\cB'_j}}{\mu} - 1 \Big|\given\cF_G\bigg]
 + \frac{M+1}{dn}\sum_{j=1}^{\ell_2}\E\bigg[\Big| \frac{Z_{\cB''_j}}{\mu} - 1 \Big|\given\cF_G\bigg] \\
&\leq 2\epsilon -1 +  \mbox{$\frac5{\log\log n}$} +\frac1{dn}\sum_{\dy \in \vE} \frac{Z_{\dy}}{\mu}  = 2\epsilon + \mbox{$\frac5{\log\log n}$}~,
\end{align*}
where the last equality follows from the fact that $$\sum_\dy Z_\dy = \sum_\dy \cP_{T+K-1}(\dx,\dy) = (d-1)^{T+K} = \mu dn~.$$
This completes the proof.
\end{proof}

\begin{proof}[\emph{\textbf{Proof of Theorem \ref{thm-nbrw}}}]
Let $(\vW_t)$ be the non-backtracking random walk, and let $\pi$ denote the stationary
distribution on $\vE$.

The lower bound is a consequence of the following simple claim:
\begin{claim}\label{cl:mixingLowerBound}
Every $d$-regular graph on $n$ vertices satisfies
$$\tmix(1-\epsilon) \geq \lceil\log_{d-1}(dn)\rceil - \lceil\log_{d-1}(1/\epsilon)\rceil~\mbox{ for any $0<\epsilon < 1$}~.$$
\end{claim}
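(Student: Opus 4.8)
The plan is to turn the claim into an elementary counting bound on the support of the non-backtracking walk, which is purely deterministic and uses nothing about $G$ beyond $d$-regularity. The key observation is that in any $d$-regular graph, a \NBRW\ started from a directed edge $\dx = (x,y)$ has exactly $d-1$ choices at each step (the current endpoint has $d$ neighbors, one of which is excluded by the non-backtracking rule). Hence the number of non-backtracking walks of length $t$ from $\dx$ is exactly $(d-1)^t$, and therefore the support of $\P_\dx(\vW_t \in \cdot)$ is contained in a set $S_t(\dx) \subseteq \vE$ with $|S_t(\dx)| \le (d-1)^t$.

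From here I would read off the total-variation lower bound. Since $G$ is $d$-regular we have $|\vE| = dn$ and the stationary distribution $\pi$ is uniform on $\vE$; testing the TV distance against the event $A = \vE \setminus S_t(\dx)$ gives
$$\|\P_\dx(\vW_t \in \cdot) - \pi\|_{\mathrm{TV}} \ge \pi(A) = 1 - \frac{|S_t(\dx)|}{dn} \ge 1 - \frac{(d-1)^t}{dn}~.$$
Taking the maximum over $\dx \in \vE$ yields $d(t) \ge 1 - (d-1)^t/(dn)$ for every $t \ge 0$, and the right-hand side is decreasing in $t$.

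Finally I would convert this into the stated bound on $\tmix(1-\epsilon)$ by bookkeeping with the ceilings. Put $T = \lceil \log_{d-1}(dn) \rceil - \lceil \log_{d-1}(1/\epsilon) \rceil$; it suffices to check that $d(T-1) > 1 - \epsilon$, equivalently $(d-1)^{T-1} < \epsilon \, dn$. Using $\lceil \log_{d-1}(dn) \rceil < \log_{d-1}(dn) + 1$ and $\lceil \log_{d-1}(1/\epsilon) \rceil \ge \log_{d-1}(1/\epsilon)$ gives
$$(d-1)^{T-1} < (d-1)^{\log_{d-1}(dn)+1} \cdot (d-1)^{-\log_{d-1}(1/\epsilon) - 1} = (d-1)\,dn \cdot \frac{\epsilon}{d-1} = \epsilon\, dn~,$$
as required. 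Since $t \mapsto 1 - (d-1)^t/(dn)$ is decreasing, it follows that $d(t) > 1-\epsilon$ for all $t \le T-1$, whence $\tmix(1-\epsilon) \ge T$. There is essentially no real obstacle in this argument; the only points requiring a little care are the trivial (but worth stating) fact that $\pi$ is uniform, so that the complement of a small support carries $\pi$-mass close to $1$, and the arithmetic with the ceiling functions.
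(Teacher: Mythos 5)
Your proof is correct and follows essentially the same route as the paper: bound the support of the \NBRW\ at time $t$ by the number $(d-1)^t$ of non-backtracking walks, test total variation against the complement (which has uniform stationary mass at least $1-\epsilon$), and finish with the ceiling arithmetic. The only cosmetic difference is that the paper phrases the support bound via $\partial B_T(\dx_0)$ and works at $T=\lfloor\log_{d-1}(\epsilon dn)\rfloor$, while you argue directly with the set of endpoints of length-$t$ non-backtracking walks, which is if anything slightly cleaner.
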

\begin{proof}[Proof of claim]
Let $\epsilon > 0$ and let $\dx_0\in\vE$ be any starting position.
 Clearly, at time $T=\lfloor\log_{d-1}(\epsilon dn)\rfloor$ we have
$$|\partial B_{T}(\dx_0)| \leq (d-1)^{T} \leq \epsilon dn~,$$ and the set $A\deq \vE\setminus \partial B_{T}(\dx_0)$ has stationary measure at least $1- \epsilon$. Thus,
$$\|\P_{\dx_0}(\vW_{T}\in\cdot)-\pi\|_\mathrm{TV} \geq \left|\P_{\dx_0}(\vW_{T}\in A)-\pi(A)\right| \geq 1-\epsilon~,$$
implying that $\tmix(1-\epsilon) > T$. The proof now follows from the fact that
\begin{align*}
\lceil\log_{d-1}(dn)\rceil - \lceil\log_{d-1}(1/\epsilon)\rceil &= \lceil\log_{d-1}(dn)\rceil + \lfloor\log_{d-1}\epsilon\rfloor \\
&\leq \lceil \log_{d-1}(\epsilon dn)\rceil \leq T+1~.\qedhere
\end{align*}
\end{proof}
For the upper bound, let $\dx_0$ be the worst starting position, and let $\dx= \vW_{t_0}$, where
$t_0 = \lceil \log_{d-1}(2/\epsilon)\rceil$. Let $\droot$
denote the event that $\dx$ is a directed $L$-root, where $L=\lceil\frac16\log_{d-1}(dn)\rceil$.
Conditioning on the statements of Lemma \ref{lem-dir-leq-one-bad-edge}
and Lemma \ref{lem-nbrw-to-good-roots} (and recalling that both hold \whp) we obtain that $ \P_{\dx_0} (\droot) \geq 1-\epsilon$.

Condition on the statement of Proposition \ref{prop-poisson}, and following its notation,
let $Z(\dx)$ accept the value $\cP_{T+K-1}(\dx,\dy)$ with
probability $1/dn$, where
$$ K = \lceil2\log_{d-1}(1/\epsilon)\rceil~, \quad T = \lceil\log_{d-1} (dn)\rceil ~, \quad\mu = (d-1)^{T+K}/dn~.$$
The following then holds:
\begin{align}
\sum_{\dy \in \vE}& \left|\P_{\dx}(\vW_{T + K} = \dy\mid \droot) - \frac{1}{dn}\right|\nonumber\\
&= \sum_{k} \left|\left\{\dy: \cP_{T+K-1}(\dx,\dy)=k \right\}\right|
\left|\frac{k}{(d-1)^{T+K}} - \frac{1}{dn}\right|\nonumber\\
&= \sum_{k} \P\left(Z = k\mid \cF_G\right) \left|\frac{k}{\mu} - 1\right| =  \E \left[\left| (Z/\mu) - 1\right|\given\cF_G\right] \leq 2\epsilon+o(1)~,\label{eq-tv-given-LR}
\end{align}
where in the last inequality we applied Proposition \ref{prop-poisson} onto the directed $L$-root $\dx$ (given the event $\droot$).
We deduce that for $t(\epsilon) = t_0 + T + K$:
\begin{align} \Big\|\P_{\dx_0}&(\vW_{t}\in\cdot)-\pi\Big\|_\mathrm{TV}
= \frac12 \sum_{\dy \in \vE} \left|\P_{\dx_0}(\vW_{t} = \dy) - \frac{1}{dn}\right|
 \nonumber \\&\leq
 \frac12 \P_{\dx_0}(\droot) \sum_{\dy \in \vE} \left|\P_{\dx_0}(\vW_{t} = \dy\mid \droot) - \frac{1}{dn}\right|
 + \P_{\dx_0}(\droot^c)\nonumber\\
 &\leq \epsilon + (1-\epsilon)\P_{\dx_0}(\droot^c) + o(1) \leq 2\epsilon - \epsilon^2 + o(1) < 2\epsilon~,\label{eq-tv-upper}
 \end{align}
where the first inequality in the last line is by \eqref{eq-tv-given-LR}, the second one is due to
the fact that $\P(\droot^c)\leq\epsilon$, and the third inequality holds for sufficiently large values of $n$.
Therefore, for any large $n$ we have
\begin{align*}
\tmix(\epsilon) &\leq t(\epsilon/2) \leq \lceil\log_{d-1}(dn)\rceil + 3\left\lceil \log_{d-1}(2/\epsilon)\right\rceil+ \lceil\log_{d-1}2\rceil\\
&\leq \lceil\log_{d-1}(dn)\rceil + 3\lceil \log_{d-1}(1/\epsilon)\rceil+ 4
\end{align*}
(where in the last inequality we used the fact that $d\geq 3$), as required.
\end{proof}

\section{Cutoff for random regular graphs of large degree}\label{sec:large-d}

In this section, we prove Theorem \ref{thm-nbrw-large-d} and Corollary \ref{cor-rw-large-d}, which extend our cutoff result for the \SRW\ and \NBRW\ on almost every random regular graph of fixed degree $d\geq 3$ to the case of $d$ large. To prove cutoff for the \NBRW, we adapt our original arguments (from the case of $d$ fixed) to the new delicate setting
where our error probabilities are required to be exponentially small in $d$. The behavior of the \SRW\ is then obtained as a corollary of this result.

Throughout the section, let
$d=d(n) \to\infty$ with $n$, and recall that we further assume that $d = n^{o(1)}$, since otherwise the the mixing time is $O(1)$ and cutoff is impossible.

\subsection{NBRWs on random regular graphs of large degree}

As we will soon show, when $d$ is large we no longer need to deal with $K$-roots (and the locally-tree-like geometry of the starting point of our walk), as all vertices will have sufficient expansion \whp.  However, the analysis of the configuration model becomes more delicate, as the probability that it produces a simple graph is $(1+o(1))\exp\big(\frac{1-d^2}4\big)$ (see \eqref{eq-p-simple}), which now decays with $n$. Thus, to prove that the probability of an event goes to 0 on $\G(n,d)$, we must now show that its probability is $o\left(\exp(-d^2/4)\right)$ in the configuration model.

\begin{lemma}\label{l:largeDBoundarySize}
With high probability, for all $\dx\in\vE$ and all $t \leq \frac47\log_{d-1}n$,
\begin{equation}\label{e:largeDEdgeBound}
 |\partial B_t(\dx)| \geq (1-o(1))(d-1)^t ~.
 \end{equation}
\end{lemma}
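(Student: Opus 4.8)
The plan is to run the configuration-model exploration used in the proofs of Lemmas~\ref{lem-u-boundary-sizes} and~\ref{lem-dir-root-boundary-sizes}, but with error probabilities driven down to $o(\exp(-d^2/4))$; since $\P(\Simple)$ is of order $\exp(-d^2/4)$ by \eqref{eq-p-simple}, it suffices to bound the configuration-model probability of the complementary (bad) event \emph{intersected with $\Simple$} by $o(\exp(-d^2/4))$, so that after a union bound over the $dn$ directed edges and the $O(\log_{d-1}n)$ relevant values of $t$ each individual failure probability must be $o\bigl(\exp(-d^2/4)/(dn\log n)\bigr)$. Fix $\dx\in\vE$, set $T=\lfloor\tfrac47\log_{d-1}n\rfloor$ and $L=\lfloor\tfrac15\log_{d-1}n\rfloor$, and expose $B_T(\dx)$ level by level. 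As in \eqref{eq-srw-bad-edge-i-k}, when the $k$-th unmatched point of $\partial B_i(\dx)$ is paired the conditional probability it falls into the already-exposed part is $O\bigl((d-1)^{i+1}/n\bigr)$, so the number $A_i$ of collisions at level $i$ is stochastically dominated by $\Bin\bigl((d-1)^{i+1},O((d-1)^{i+1}/n)\bigr)$, and $\sum_{i<T}A_i$ by a binomial of mean $O\bigl((d-1)^{2T}/n\bigr)=O(n^{1/7})$. A Chernoff bound gives $\sum_{i<T}A_i\le n^{1/6}$ except with probability $\exp(-\Omega(n^{1/6}))=o\bigl(\exp(-d^2/4)/(dn\log n)\bigr)$, since $d=n^{o(1)}$.

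Next I would bound the \emph{shallow} tree excess $\tx(B_L(\dx))=\sum_{i<L}A_i$; its mean is $O\bigl(\sum_{i<L}(d-1)^{2i+2}/n\bigr)=O\bigl((d-1)^{2L}/n\bigr)=O(n^{-3/5})$. Being a sum of independent indicators with total mean $m=O(n^{-3/5})$, it obeys the Poisson tail $\P(\tx(B_L(\dx))\ge j)\le m^{j}/j!$, so for a suitable threshold $j^{\ast}=O(d^2/\log n)$ one has $\P\bigl(\tx(B_L(\dx))\ge j^{\ast}\bigr)=o\bigl(\exp(-d^2/4)/(dn\log n)\bigr)$ --- the exponent $-j^{\ast}\,\Omega(\log n)$ dominates $-d^2/4$ with room to spare.

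The step that binds everything together is that \emph{on $\Simple$ there are no collisions at level $0$}: such a collision would pair a point of the head of $\dx$ with a point of its tail, with another of its own points, or with a point of a freshly exposed child --- each forcing a loop or a multiple edge. Translating collisions into boundary loss as in \eqref{e:badVertexBound-1} (a collision at level $i$ removes at most two directed edges from $\partial B_{i+1}(\dx)$, hence at most $2(d-1)^{t-i-1}$ from $\partial B_t(\dx)$), on $\Simple$ together with the two events above we get, for every $t\le T$,
\begin{align*}
(d-1)^t-|\partial B_t(\dx)|
\;\le\;2\sum_{i=1}^{t-1}A_i(d-1)^{t-i-1}
\;\le\;\frac{2\,\tx(B_L(\dx))}{(d-1)^2}\,(d-1)^t+\frac{2\sum_{i<T}A_i}{(d-1)^{L+1}}\,(d-1)^t
\;\le\;\Bigl(\frac{2j^{\ast}}{(d-1)^2}+2n^{-1/30}\Bigr)(d-1)^t,
\end{align*}
where we used $(d-1)^{L+1}\ge n^{1/5}$, and $\tfrac{2j^{\ast}}{(d-1)^2}+2n^{-1/30}=O(1/\log n)+O(n^{-1/30})=o(1)$, uniformly in $\dx$ and $t$. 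A union bound over the $dn$ edges and the $O(\log_{d-1}n)$ values of $t$ keeps the total failure probability on $\Simple$ at $o(\exp(-d^2/4))=o(\P(\Simple))$, whence \eqref{e:largeDEdgeBound}.

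I expect this last maneuver to be the main obstacle. The crude leaf-count $2(d-1)^{t-i-1}$ per collision is inadequate by itself: a single collision at level $0$ costs a $\Theta(1/(d-1))$ fraction of $\partial B_t(\dx)$, and such collisions --- in the configuration model, a loop or multiple edge at the head of $\dx$ --- cannot be forbidden with probability $o\bigl(\exp(-d^2/4)/(dn)\bigr)$, as a $\Theta(d^2)$-sized set of directed edges carries one. Passing to $\Simple$ removes them outright; the collisions that survive lie at level $\ge1$, cost only $O(1/(d-1)^2)$ each, and number at most $j^{\ast}=O(d^2/\log n)$, so their combined cost is $O(1/\log n)=o(1)$. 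Calibrating $L$ in the range $\bigl(\tfrac16\log_{d-1}n,\tfrac12\log_{d-1}n\bigr)$ --- large enough to absorb the deep collisions ($i\ge L$), small enough that $\E\,\tx(B_L(\dx))\to0$ --- and choosing $j^{\ast}$ so that simultaneously the Poisson tail beats $\exp(-d^2/4)/(dn)$ and $j^{\ast}/(d-1)^2\to0$ is where the quantitative bookkeeping has to be done carefully.
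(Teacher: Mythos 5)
Your proposal is correct and follows essentially the same route as the paper's proof: level-by-level exposure with binomial domination of collisions, a sharper Poisson-type tail for the shallow collisions with threshold of order $d^2/\log n$ (the paper's $\rho=\lceil 4+2d^2/\log n\rceil$), the key observation that conditioning on \Simple\ kills all level-$0$ collisions so each surviving collision costs only an $O((d-1)^{-2})$ fraction of $\partial B_t(\dx)$, and error probabilities calibrated to survive division by $\P(\Simple)=\Theta(\mathrm{e}^{-d^2/4})$. The only cosmetic slip is that $2j^{\ast}/(d-1)^2$ is $O(1/\log n)+O(1/d^2)$ rather than $O(1/\log n)$ when $d=o(\sqrt{\log n})$, but this is still $o(1)$ since $d\to\infty$, so the conclusion stands.
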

\begin{proof}

The proof is an adaption of Lemma \ref{lem-u-boundary-sizes}.  Pick a directed edge $\dx$ uniformly at random and expose its first level.  Since we are interested in probabilities conditioned on the graph $G$ being simple, we may assume that $|\partial B_1(\dx)|=d-1$, that is, there are no self-loops or multiple edges from $\dx$.

We will show that \eqref{e:largeDEdgeBound} holds with probability $1-o\left(n^{-1} \exp(-d^2/4)\right)$ for the above $\dx$ in the configuration model.  Clearly, for any $t<t'$ we have  $|\partial B_t(\dx)| \geq  (d-1)^{t'-t}|\partial B_{t'}(\dx)|$, hence we can restrict our attention to $\partial B_T(\dx)$ where $T = \lfloor\frac47\log_{d-1}n\rfloor$.

Following the notation in the proof of Lemma \ref{lem-leq-one-bad-edge}, let $A_{i,k}$ be the event that, in the process of sequentially matching points, the newly exposed pair of the $k$-th unmatched point in $\partial B_i$ belongs to some vertex already in $B_{i+1}$.  Further recall that, by \eqref{eq-srw-bad-edge-i-k} and the discussion thereafter,
the number of events $\{A_{i,k} : 0\leq i < T \}$ that occur is stochastically dominated by
a binomial variable with parameters $\Bin\left((d-1)^{T+1}, \frac{(d-1)^{T}}{n}\right)$. By our choice of $T$, the expectation of this random variable is
$$ (d-1)^{2T+1} / n \leq d n^{1/7} \leq n^{1/7+o(1)}~,$$ hence the number of events $A_{i,k}$ with $0\leq i < T$ that occur is less than $n^{1/6}$ (with room to spare) with probability at least $1 - \exp(-\Omega(n^{1/6}))$.
Next, set $$L = \left\lfloor \tfrac15\log_{d-1} n\right\rfloor,\qquad \rho=\left\lceil4+2d^2/\log n\right\rceil=o(d^2).$$  As before, we can stochastically dominate the number of events $A_{i,k}$ that occur in the first $L$ levels, $\{A_{i,k} : 0\leq i < L \}$, by
a binomial variable $X_L \sim \Bin\left((d-1)^{L+1}, \frac{(d-1)^{L}}{n}\right)$. Since the expected value of $X_L$ is
$$(d-1)^{2L+1} / n = o\big(n^{-1/2}\big)~,$$
and since $L\to\infty$ with $n$ (by our assumption on $d$), it is easy to verify that
$$ \P(X_L \geq \rho) = (1+o(1))\P(X_L = \rho) = o\big(n^{-\rho/2}\big)~.$$
Recalling the definition of $\rho$, it now follows that the number of events $A_{i,k}$ with $0\leq i < L$ that occur is less than $\rho$ except with probability $o(n^{-2} \mathrm{e}^{-d^2})$.

Each event $A_{i,k}$ reduces the number of leaves in level $i+1$ by at most 2, hence it reduces the number of leaves in level $t>i$ by at most $2(d-1)^{t-i-1}$ vertices. It then follows that for each $0\leq t < T$,
\begin{equation}\label{e:badVertexBound-1-large-d}
|\partial B_t(\dx)| \geq (d-1)^{t} - \sum_{i<t}\sum_k \one_{A_{i,k}} 2(d-1)^{t-i-1}~.
\end{equation}
As $|\partial B_1(\dx)|=d-1$, there are no events of the form $A_{0,k}$. Therefore, by the discussion above, with probability $1-o(n^{-2} \mathrm{e}^{-d^2})$ we have
$$ \sum_{i < L} \sum_k \one_{A_{i,k}} 2(d-1)^{t-i-1} \leq
2(d-1)^{t-2}\rho = o\left((d-1)^{t}\right).$$
Furthermore, by the above discussion on the number of events $A_{i,k}$ that occur, we deduce that
with probability at least $1-\exp(-\Omega(n^{1/6}))$
$$ \sum_{i =L}^{t-1} \sum_k \one_{A_{i,k}} 2(d-1)^{t-i-1} \leq
2(d-1)^{t-L-1} n^{1/6} = o\left((d-1)^t\right).$$
Plugging the above in \eqref{e:badVertexBound-1-large-d}, we obtain that with probability $1-o(n^{-2} \mathrm{e}^{-d^2})$
\begin{equation}\label{e:badVertexBound-2-large-d}
|\partial B_t(\dx)| \geq (1-o(1))(d-1)^{t}~,
\end{equation}
and a union bound implies that \eqref{e:badVertexBound-2-large-d} holds for all directed edges $\dx$ which satisfy $|\partial B_1(\dx)|=d-1$ except with probability $O\big(\frac{d}n\exp(-d^2)\big)=o(\exp(-d^2))$. By \eqref{eq-p-simple},
it now follows that \eqref{e:largeDEdgeBound} also holds \whp\ over $\G(n,d)$.
\end{proof}

The following lemma, the analogue of Lemma \ref{lem-k-roots-non-intersections}, is proved by essentially following the same argument as in the proof of  Lemma \ref{lem-k-roots-non-intersections}, i.e., calculating the size of the common neighborhood of two vertices.
 The difference is again that here we need to deal with the fact that the probability that the configuration model is a simple graph is exponentially small in $d$. This is achieved by repeating the approach, demonstrated in Lemma \ref{l:largeDBoundarySize} above, of treating $B_1(\dx)$ separately. Applying this analysis to the neighborhoods of the 2 starting directed edges $\dx,\dy$ gives the required result, with the remaining arguments of Lemma \ref{lem-k-roots-non-intersections} left unchanged (we omit the full details).
\begin{lemma}
Set $T = \frac{51}{100}\log_{d-1}n$ and $L = \frac16\log_{d-1}n$.
Then \whp, for every $\dx,\dy\in\vE$ with $\dist(\dx,\dy) > 2L$ and every $t \leq T$,
$$ | B_t(\dx)\cup B_t(\dy) | \geq n^{-1/7}(d-1)^t ~.$$
\end{lemma}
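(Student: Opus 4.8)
The plan is to follow the joint-neighborhood argument in the proof of Lemma~\ref{lem-k-roots-non-intersections} — grow the common sphere $\partial\widetilde B_i\deq\{\dir{w}\in\vE:\dist(\{\dx,\dy\},\dir{w})=i\}$ around the two starting edges and keep track of how many of its vertices are killed by ``merges'' — but run the bad-event count with the two-phase refinement from the proof of Lemma~\ref{l:largeDBoundarySize}, so that each failure probability is $o(e^{-d^2})$ and therefore, by \eqref{eq-p-simple}, negligible on $\G(n,d)$ even after a union bound over all admissible pairs. This in fact yields the stronger conclusion $|B_t(\dx)\cup B_t(\dy)|=|\widetilde B_t|\ge(2-o(1))(d-1)^t$, equivalently $|B_t(\dx)\cap B_t(\dy)|=o((d-1)^t)$, which is the form in which the lemma is actually used (in the large-$d$ analogue of Proposition~\ref{prop-poisson}); the stated inequality with the factor $n^{-1/7}$ is a comfortable slack.

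First I would pick $\dx,\dy$ uniformly in $\vE$ and expose $B_1(\dx)$ and $B_1(\dy)$. Conditioning on \Simple\ lets us assume there are no loops or parallel edges, so $|\partial B_1(\dx)|=|\partial B_1(\dy)|=d-1$; moreover a uniform pair has $\dist(\dx,\dy)>2L$ with probability $1-O((d-1)^{2L}/n)=1-O(n^{-2/3})$, and on that event the two explored $L$-balls are disjoint, whence $|\partial\widetilde B_1|=2(d-1)$ and $\widetilde B_t=B_t(\dx)\cup B_t(\dy)$ for every $t$. I then sequentially expand the joint layers up to level $T=\tfrac{51}{100}\log_{d-1}n$ exactly as in Lemma~\ref{lem-k-roots-non-intersections}, writing $A_{i,k}$ for the event that the $k$-th newly exposed pair in level $i$ hits a vertex already present in $\widetilde B_{i+1}$.

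The large-$d$ count is then lifted almost verbatim from the proof of Lemma~\ref{l:largeDBoundarySize}. By \eqref{eq-srw-bad-edge-i-k} applied to the pool $\widetilde B$ (only a factor $2$ larger than a single neighborhood), $\sum_{i<T}\sum_k\one_{A_{i,k}}$ is stochastically dominated by a binomial variable of mean $O((d-1)^{2T}/n)=n^{1/50+o(1)}$, which is far below $n^{1/6}$, so this total is $<n^{1/6}$ except with probability $e^{-\Omega(n^{1/6})}$. Separately, with $\ell=\lfloor\tfrac15\log_{d-1}n\rfloor$ and $\rho=\lceil 4+2d^2/\log n\rceil=o\big((d-1)^2\big)$, the count over levels $i<\ell$ is dominated by a binomial of mean $O((d-1)^{2\ell}/n)=o(n^{-1/2})$, hence exceeds $\rho$ with probability only $o(n^{-\rho/2})=o(n^{-2}e^{-d^2})$, exactly as in Lemma~\ref{l:largeDBoundarySize}. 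Since $d^2=n^{o(1)}$, both bad events have probability $o(n^{-3}e^{-d^2})$, so a first-moment bound over the at most $(dn)^2$ pairs with $\dist(\dx,\dy)>2L$ makes the overall failure probability $o(e^{-d^2/4})$, hence $o(1)$ on $\G(n,d)$ by \eqref{eq-p-simple}. On the complementary event each $A_{i,k}$ removes at most $2(d-1)^{t-i-1}$ vertices from level $t$, and there are no level-$0$ events, so, as in \eqref{e:badVertexBound-1-large-d}--\eqref{e:badVertexBound-2-large-d},
\[
|\partial\widetilde B_t|\;\ge\;2(d-1)^t-2\rho(d-1)^{t-2}-2n^{1/6}(d-1)^{t-\ell-1}\;=\;(2-o(1))(d-1)^t\qquad(t\le T),
\]
using $\rho=o((d-1)^2)$ and $n^{1/6}(d-1)^{-\ell}=n^{-1/30+o(1)}\to0$. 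Since $\widetilde B_t=B_t(\dx)\cup B_t(\dy)$, this gives $|B_t(\dx)\cup B_t(\dy)|\ge(2-o(1))(d-1)^t\ge n^{-1/7}(d-1)^t$ for large $n$; and together with $|B_t(\dx)|,|B_t(\dy)|\le\sum_{j\le t}(d-1)^j=(1+o(1))(d-1)^t$ it also gives $|B_t(\dx)\cap B_t(\dy)|=o((d-1)^t)$.

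The one genuinely new ingredient — the only place where one cannot merely transcribe Lemma~\ref{lem-k-roots-non-intersections} — is this two-phase split: the contribution of the first $\Theta(\log_{d-1}n)$ levels has to be pinned at the scale $\rho=o(d^2)$ with failure probability $o(n^{-\rho/2})\le o(n^{-2}e^{-d^2})$, since a single binomial tail at the naive scale is only $n^{-\Omega(1)}$ and cannot survive the $e^{-d^2/4}$ penalty that \eqref{eq-p-simple} imposes when passing from the configuration model to $\G(n,d)$. A minor accompanying subtlety, already present for the fixed-$d$ directed lemmas, is that $\dist$ on $\vE$ is not symmetric, so $\partial\widetilde B_i$ must be defined via distance \emph{from} $\{\dx,\dy\}$; once that is fixed, the identity $\widetilde B_t=B_t(\dx)\cup B_t(\dy)$ and the remaining bookkeeping go through unchanged.
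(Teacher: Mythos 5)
Your proof is correct and follows essentially the same route as the paper, whose own argument is exactly the sketch you carried out: run the joint two-source exploration of Lemma~\ref{lem-k-roots-non-intersections}, but control the collision count with the two-phase split of Lemma~\ref{l:largeDBoundarySize} (first level forced simple by conditioning, levels below $\tfrac15\log_{d-1}n$ pinned at scale $\rho=o(d^2)$), so that every failure probability survives the $\mathrm{e}^{-d^2/4}$ cost of passing from the configuration model to $\G(n,d)$ via \eqref{eq-p-simple}. The only nitpick is immaterial: with $\rho=\lceil 4+2d^2/\log n\rceil$ the low-level bad event has probability $o(n^{-2}\mathrm{e}^{-d^2})$ rather than $o(n^{-3}\mathrm{e}^{-d^2})$, but the union bound over the at most $(dn)^2$ admissible pairs is still $o(\mathrm{e}^{-d^2/4})$, so your conclusion (indeed the stronger bound $|B_t(\dx)\cup B_t(\dy)|\ge(2-o(1))(d-1)^t$) stands.
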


The final ingredient needed is the analogue of the Poissonization result of Proposition \ref{prop-poisson},
as given by the following proposition.

\begin{proposition}\label{prop-poisson-large-d}
Let $\epsilon > 0$, set
\begin{align*}
T  = \lceil \log_{d-1}(dn) + 2\log_{d-1}(1/\epsilon) \rceil~, \quad\mu = (d-1)^{T}/dn~,
\end{align*}
and for each $\dx\in\vE$, define the random variable $Z=Z(\dx)$ by
\begin{align*}
  \P(Z = k) = \frac{1}{dn}\left|\left\{ \dy \in \vE : \cP_{T-1}(\dx,\dy) =k \right\} \right|~,
\end{align*}
where $\cP_\ell(\dx,\dy)$ is the number of $\ell$-long non-backtracking paths from $\dx$ to $\dy$.
Then \whp, every $\dx$  satisfies
$$\E \left[\left| (Z(\dx)/\mu)-1\right|\given \cF_G \right]<2\epsilon+\frac{5}{\log\log n}~,$$
where $\cF_G$ is the $\sigma$-field generated by the graph $G\sim \mathcal{G}(n,d)$.
\end{proposition}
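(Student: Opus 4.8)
The plan is to mimic the proof of Proposition~\ref{prop-poisson} line by line, performing every estimate inside the configuration model and upgrading each of its polynomial‑in‑$n$ error bounds to one of order $o(\exp(-d^2/4))$, so that \eqref{eq-p-simple} --- which gives $\P(\Simple)=\Theta(\exp(-d^2/4))$ --- transfers the conclusion to $\G(n,d)$. The large‑$d$ regime brings one simplification: by Lemma~\ref{l:largeDBoundarySize} we have $|\partial B_t(\dx)|\geq(1-o(1))(d-1)^t$ for \emph{every} directed edge $\dx$ and all $t\leq\frac47\log_{d-1}n$, so directed $L$‑roots need not be singled out --- the burn‑in of Lemma~\ref{lem-nbrw-to-good-roots} disappears, the statement is now for all $\dx$, and the only ``local geometry'' still required of the auxiliary family $\cB$ is that its members lie pairwise at distance $>2L$ and at distance $>2L$ from $\dx$, with $L=\frac16\log_{d-1}n$ --- exactly the hypothesis of the large‑$d$ analogue of Lemma~\ref{lem-dir-k-roots-intersections} quoted above. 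Exposing $B_{T_1}(\dx)$ and the $B_{T_2}(\dy_i)$ with $T_1=\lfloor T/2\rfloor$, $T_2=\lceil T/2\rceil-2$ (so $T_1+T_2+1=T-1$), those two lemmas give, with configuration‑model failure probability $o(\exp(-d^2))$, the same lower bounds on $|\tilde U|,|\tilde V_i|$ and hence the same sandwich $R_i^-\leq\tilde Z_{\dy_i}\leq R_i^+$ by i.i.d.\ binomials of common mean $\mu=(d-1)^T/dn\geq\epsilon^{-2}$, whence $\E|Q_i/\mu-1|\leq\mu^{-1/2}\leq\epsilon$ exactly as before.

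What requires new ideas is that $\exp(-d^2/4)$ can be smaller than any power of $n$, so the two places where Proposition~\ref{prop-poisson} used that a polynomial error is $o(1/n)$ must be reworked into errors of the form $(\text{polynomial})^{\rho}$ with $\rho$ growing like $d^2/\log n$. Concretely, I would (i) enlarge the block size to $M=\lceil d^2\log^2 n\rceil$ --- still $n^{o(1)}=o(dn)$ --- so that, using $\mu\geq\epsilon^{-2}$ and $\log^2 n/(\log\log n)^2=\omega(1)$, each Chernoff/Azuma bound in Lemma~\ref{lem-cZ-bound}, of order $\exp(-\Omega(\mu M/(\log\log n)^2))$ or $\exp(-\Omega(M/(\log\log n)^2))$, becomes $\exp(-\omega(\log n+d^2))=o(n^{-2}\exp(-d^2))$; and (ii) when passing from $\cB$ to $\cB'$, allow up to $\rho=\lceil 4+3d^2/\ln n\rceil$ ``bad'' edges rather than $3$. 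Since the per‑edge probability of being bad --- landing in, or within distance $2L$ of, an already‑exposed $2L$‑neighbourhood --- is at most $O(M(d-1)^{2L}/(dn))=O(Mn^{-2/3})=n^{-2/3+o(1)}$, the chance that a block contains $\geq\rho$ bad edges is at most $\binom M\rho\,(n^{-2/3+o(1)})^{\rho}\leq n^{-\rho/2}\leq n^{-2}\exp(-d^2)$; and since $\rho=O(1+d^2/\log n)$ while $M\asymp d^2\log^2 n$, the resulting ``$+\rho$'' in the analogue of \eqref{eq-E-ZB-bound-1} contributes only $\rho/M=O(1/\log^2 n)=o(1/\log\log n)$, absorbed into the $5/\log\log n$ slack exactly as ``$+3$'' was. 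With these changes, the proofs of \eqref{eq-tilde-Z-upper-bound}, \eqref{eq-tilde-Z-lower-bound} and of the display around \eqref{eq-E-ZB-bound-1}--\eqref{eq-E-ZB-bound-2} go through verbatim, each ``$o(n^{-2})$'' now reading ``$o(n^{-2}\exp(-d^2))$''.

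Finally, partition $\vE$ into $\lceil dn/M\rceil=O(n)$ blocks of size $M$ or $M+1$, apply the above to each, and average as in the closing display of the proof of Proposition~\ref{prop-poisson}, using $\sum_{\dy}\cP_{T-1}(\dx,\dy)=(d-1)^{T}=\mu\,dn$: a union bound over the $O(n)$ blocks shows that for a fixed $\dx$ the bound $\E[|Z(\dx)/\mu-1|\mid\cF_G]<2\epsilon+5/\log\log n$ fails with configuration‑model probability $o(n^{-1}\exp(-d^2))$, and a union bound over the $dn$ choices of $\dx$ gives total failure probability $o(d\exp(-d^2))=o(\exp(-d^2/4))$, since $d\exp(-3d^2/4)\to0$. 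By \eqref{eq-p-simple} the statement therefore holds \whp\ on $\G(n,d)$. I expect the one genuine obstacle to be item (ii): one must check that threading the $\rho$‑threshold simultaneously through the binomial couplings, the Chernoff estimates for $\sum Q_i$ and $\sum\Delta_i$, and the Azuma martingale step really does convert every polynomial tail into an $o(\exp(-d^2/4))$ bound while keeping all the newly introduced $\rho/M$‑type slack below $5/\log\log n$; the purely geometric inputs (Lemma~\ref{l:largeDBoundarySize} and the large‑$d$ analogue of Lemma~\ref{lem-dir-k-roots-intersections}) already carry $o(\exp(-d^2))$ errors and need no further work.
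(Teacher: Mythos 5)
Your proposal is correct and follows essentially the same route as the paper, whose own ``proof'' is just a remark deferring to Proposition \ref{prop-poisson} with $d$-dependent bin sizes (the paper takes $M=d^3\log^2 n$) and minor adjustments so that all configuration-model error bounds are $o(\exp(-d^2/4))$. Your choice $M=\lceil d^2\log^2 n\rceil$ together with the $\rho$-threshold for bad edges (mirroring the device already used in Lemma \ref{l:largeDBoundarySize}) achieves exactly these adjustments, so your write-up is a legitimate filling-in of the details the paper omits rather than a different argument.
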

The proof of the above proposition is essentially the same as the proof of Proposition \ref{prop-poisson},
with some minor adjustments to the estimates to ensure that they hold with probability $o\left(\exp(-d^2/4)\right)$.  The main necessary change is to let the bin sizes depend on $d$, namely to set $M=d^3 \log^2 n$. As only minor adjustments to some of the bounds are required elsewhere, we omit the details.

\begin{proof}[\emph{\textbf{Proof of Theorem \ref{thm-nbrw-large-d}}}]
The lower bound of $\tmix(s) \geq \lceil \log_{d-1} (dn) \rceil$ follows immediately from Claim \ref{cl:mixingLowerBound}, whose proof remains valid without change, even when $d$ is allowed to grow with $n$.

To obtain the upper bound, let $(\vW_t)$ denote the non-backtracking random walk started at $\vW_0 = \dx$. Set $\epsilon=3s$, and $$T  = \lceil \log_{d-1}(dn) + 2\log_{d-1}(1/\epsilon) \rceil~, \quad\mu = (d-1)^{T}/dn~.$$  By Proposition \ref{prop-poisson-large-d} we have that \whp,
\begin{align*}
\sum_{\dy \in \vE}& \left|\P_{\dx}(\vW_{T } = \dy) - \frac{1}{dn}\right|\nonumber\\
&= \sum_{k} \left|\left\{\dy: \cP_{T-1}(\dx,\dy)=k \right\}\right|
\left|\frac{k}{(d-1)^{T}} - \frac{1}{dn}\right|\nonumber\\
&= \sum_{k} \P\left(Z = k\mid \cF_G\right) \left|\frac{k}{\mu} - 1\right| \\
&=  \E \left[\left| (Z/\mu) - 1\right|\given\cF_G\right] \leq 2\epsilon+o(1)\leq s
\end{align*}
for large $n$. We conclude that $\tmix(s) \leq T \leq \lceil \log_{d-1} (dn) \rceil + 1$, since $\log_{d-1}(1/\epsilon) =o(1)$ by our assumption on $d$.
\end{proof}

\subsection{Duality between non-backtracking and simple random walks}
The following observation is attributed to Yuval Peres:
\begin{observation}\label{obs:Yuval}
  \label{obs-cover-tree}
  Conditioning on being in level $k$ of the simple random walk on the tree, we are uniform over $k$-long non-backtracking
  random walks.
\end{observation}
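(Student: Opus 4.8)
The plan is to deduce the statement from the vertex-transitivity of the rooted $d$-regular tree together with the invariance of the \SRW\ under graph automorphisms. First I would make the assertion precise: write $(X_s)_{s\ge 0}$ for the \SRW\ on $\cT$ started at the root $\rho$, fix a time $t$, and let $k=\dist(\rho,X_t)$ be the current height. Since $\cT$ is a tree there is a unique path $\rho=Y_0,Y_1,\dots,Y_k=X_t$, and this path is automatically a non-backtracking walk of length $k$ from $\rho$; the claim is that, conditioned on the event $\{\dist(\rho,X_t)=k\}$, the random path $(Y_0,\dots,Y_k)$ is uniformly distributed over the set $\Pi_k$ of all length-$k$ non-backtracking walks from $\rho$. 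Equivalently $X_t$ is uniform on $\partial B_k(\rho)$, and $\Pi_k$ is precisely the range of the first $k$ steps of an \NBRW\ from $\rho$, which is what makes this an honest duality statement.

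The heart of the argument is that the group $\Gamma=\mathrm{Aut}(\cT,\rho)$ acts transitively on $\Pi_k$ for every $k$: given $g,g'\in\Pi_k$, one builds an automorphism $\sigma\in\Gamma$ with $\sigma(g)=g'$ one level at a time, using that $\rho$ has $d$ children and every other vertex has exactly $d-1$, so the required matchings of children exist at each level and can be completed arbitrarily on the subtrees not met by $g$. Because the \SRW\ kernel on $\cT$ is $\Gamma$-invariant, any $\sigma\in\Gamma$ induces a measure-preserving bijection of trajectory space $(X_0,\dots,X_t)\mapsto(\sigma X_0,\dots,\sigma X_t)$; this map is equivariant for the operation ``trajectory $\mapsto$ geodesic to the endpoint'' (since $\sigma$ fixes $\rho$) and it preserves the event $\{\dist(\rho,X_t)=k\}$. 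Hence $\P\bigl((Y_0,\dots,Y_k)=g,\ \dist(\rho,X_t)=k\bigr)$ is the same for all $g\in\Pi_k$, and dividing by $\P(\dist(\rho,X_t)=k)$ gives the claimed uniformity. This is really the same ``by symmetry'' statement already invoked for the \SRW\ on $\cT$ in the proof of Theorem~\ref{thm-rw}, read through the bijection $\partial B_k(\rho)\leftrightarrow\Pi_k$.

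An alternative, automorphism-free route I would keep in reserve is to track the geodesic process $Y^{(s)}$ (the path from $\rho$ to $X_s$) directly. Since $X_s\mapsto Y^{(s)}$ is a bijection on $\cT$, $(Y^{(s)})$ is a Markov chain, and from a path of length $\ell\ge1$ ending at $v$ it either removes the last vertex (probability $1/d$) or appends one of the $d-1$ children of $v$, each with probability $1/d$ --- with the obvious modification at $\rho$ --- so that, conditioned on the height increasing, the appended child is \emph{uniform}, irrespective of the past of the walk. A short induction on $t$ then shows that $\P(Y^{(t)}=g)$ depends on $g$ only through its length, which is exactly the statement. I expect the one point needing care to be this last independence-from-history claim: one must check that even when $X_s$ sits at an already-visited vertex some of whose children were explored earlier, the next step is still uniform over \emph{all} its neighbors; but that is immediate from the definition of the \SRW, and is precisely what makes the non-backtracking reduction memoryless.
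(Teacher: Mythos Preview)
Your proposal is correct, and your first argument is precisely a rigorous unpacking of the paper's own justification: the paper does not give a separate proof of this observation but simply invokes ``by symmetry'' (both here and earlier in the proof of Theorem~\ref{thm-rw}), which is exactly your transitivity-of-$\mathrm{Aut}(\cT,\rho)$ argument made explicit. Your alternative geodesic-process route is also valid and is a nice direct computation, but it is not needed to match the paper.
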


More specifically, let $\cT$ be the cover tree for $G$ at $u$ with a map $\phi$, as defined in \eqref{eq-cover-tree}.  Let $X_t$ be a \SRW\ on $\cT$ started from $\rho$ and let $W_t=\phi(X_t)$ be the corresponding \SRW\ on $G$ started from $u$.  Compare this with a \NBRW\ random walk $\vW_t$ started from $\dx=(w,u)$ where $w$ is chosen uniformly from the neighbors of $u$.  For a directed edge $(y,z)$ let $\psi(\cdot)$ denote the projection $\psi((y,z))=z$, giving the vertex the \NBRW\ is presently situated at.

Note that, by symmetry, conditioned on $\dist(\rho,X_t)=k$ the random walk is uniform on the $d(d-1)^{k-1}$ points $\{w \in \cT: \dist(\rho,w)=k\}$.  By the obvious one-to-one correspondence between paths of length $k$ from $\rho$ in $\cT$ and non-backtracking paths of length $k$ in $G$ from $u$, the following holds: Conditioned on $\dist(\rho,X_t)=k$ we have that $W_t$ is distributed as $\psi(\vW_k)$.  Thus, if $\vW_t$ is mixed at time $k$ then a \SRW\ will be mixed once its lift to the cover tree reaches distance $k$ from the root.

\begin{proof}[\emph{\textbf{Proof of Corollary \ref{cor-rw-large-d}}}]

In our proof of  Theorem \ref{thm-rw}, it was shown using the Central Limit Theorem (see equation \eqref{e:clt})
that the distance from the root of the walk in the cover tree is given by
\begin{equation}\label{e:clt2}
\frac{\dist(X_{t},\rho) - \frac{(d-2)t}{d}}{\frac{2\sqrt{d-1}}{d}\sqrt{ t}} \stackrel{\mathrm{d}}{\longrightarrow} N(0,1).
\end{equation}
When $d$ grows with $n$ this Gaussian approximation still holds provided the variance satisfies $\frac{2\sqrt{d-1}}{d}\sqrt{ t}\rightarrow\infty$ or equivalently $(t/d)\to \infty$.  When $d$ and $t$ are of the same order, the number of backtracking steps is asymptotically a Poisson random variable with mean $(t/d)$, therefore $\left(t-\dist(X_{t},\rho)\right)$ is distributed as twice a $\Po(t/d)$ random variable. In both of these cases, whenever $t$ has order $\log_{d-1}n$, the variance of $\dist(X_{t},\rho)$ is of order $\frac{\log n}{d\log d}$. Finally, when $t/d \to 0$, the number of backtracking steps goes to 0 as well.
This understanding of $\dist(X_t,\rho)$ will allow us to translate our results on \NBRW s into statements on \SRW s.

If $w\in \cT$ and $\dist(\rho,w)\leq R$ then $\phi(w)\in B_R$ and hence,
\begin{align*}
\| \P(W_t\in \cdot)-\pi \|_{\mathrm{TV}} &\geq \P(W_t\in B_R) - \pi(B_R) \geq \P(\dist(X_t,\rho) \leq R) -\pi(B_R).
\end{align*}
In particular, as $|B_R|\leq O\big(\frac{n}{d-1}\big) = o(1)$ for $R\leq \log_{d-1} (n) - 1$, we have that
\begin{align}\label{e:tvLBound-large-d}
\| \P(W_t\in \cdot)-\pi \|_{\mathrm{TV}} &\geq \P\left(\dist(X_t,\rho) \leq \log_{d-1}(n) - 1\right) -o(1).
\end{align}

Next, let $\varrho_k=d_{\mathrm{TV}}(\vW_k,\pi)$
be the total variation distance between the \NBRW\ and the stationary distribution.  According to Observation \ref{obs:Yuval} (the correspondence between walks on the cover tree conditioned to be at distance $k$ and \NBRW s of length $k$), the following holds:
\begin{align*}
\| \P(W_t\in \cdot)-\pi \|_{\mathrm{TV}} &\leq  \sum_{k=0}^t \| \P\left (W_t\in \cdot \mid \dist(X_t,\rho) = k \right)-\pi \|_{\mathrm{TV}} \nonumber\\
&~~\qquad\cdot\P(\dist(X_t,\rho) = k)\\
&=  \sum_{k=0}^t  \varrho_k \P(\dist(X_t,\rho) = k)~.
\end{align*}
Now, by Theorem \ref{thm-nbrw-large-d}, when $k >  \lceil \log_{d-1}(dn) \rceil$ we have $\varrho_k = o(1)$, hence
\begin{align}\label{e:tvUBound-large-d}
\| \P(W_t\in \cdot)-\pi \|_{\mathrm{TV}} \leq \P\left(\dist(X_t,\rho) \leq  \lceil \log_{d-1}(dn) \rceil \right) + o(1).
\end{align}

Equations \eqref{e:tvLBound-large-d} and \eqref{e:tvUBound-large-d} imply that mixing takes place when $\dist(X_t,\rho)$ is $\log_{d-1}n + O(1)$. By the above discussion on the distribution of $\dist(X_t,\rho)$ this occurs when $t$ is around $\frac{d}{d-2}\log_{d-1}n$ with window $\sqrt{\frac{\log n}{d\log d}}$.

It remains to address the case where $\frac{d\log\log n}{\log n}\to\infty$. Notice that here, as the probability of the \SRW\ on $G$ making a backtracking step is $1/d$, the probability of backtracking anywhere in its first $\lceil \log_{d-1} (dn) \rceil+1$ steps is $o(1)$.  Hence, we can couple the \SRW\ and \NBRW\ in their first $\lceil \log_{d-1} (dn) \rceil+1$ steps \whp, implying they have the same mixing time. In particular, we may conclude that
for any fixed $0 < s < 1$,
the worst case total-variation mixing time of the \SRW\ on $G$ \whp\ satisfies
\begin{align*}
\tmix(s) \in \left\{ \lceil \log_{d-1} (dn) \rceil, \lceil \log_{d-1} (dn) \rceil+1\right\} ~,
\end{align*}
as required.
\end{proof}

\section{Concluding remarks and open problems}\label{sec::conclusion}
\begin{list}{\labelitemi}{\leftmargin=2em}
 \item We have established the cutoff phenomenon for \SRW s and \NBRW s on almost every $d$-regular graph on $n$ vertices, where $3 \leq d \leq n^{o(1)}$ (beyond which the mixing time is $O(1)$ and we cannot have cutoff). For both walks, we obtained the precise cutoff location and window:
\begin{enumerate}[1.]
 \item For the \SRW, the cutoff point is \whp\ at $\frac{d}{d-2}\log_{d-1}n$, and in fact, we obtained the \emph{two} leading order terms of $\tmix(s)$ for any fixed $s$.
\item  For the \NBRW, cutoff occurs at $\log_{d-1}(dn)$ \whp\ ($\frac{d}{d-2}$ times faster than the \SRW) with an $O(1)$ window. Moreover, for large $d$, the entire mixing transition takes place within a 2-step cutoff window.
\end{enumerate}

 \item Given our discussion in Section \ref{sec:intro} on expander graphs (and the product-criterion for cutoff),
 it would be interesting to extend our results to any arbitrary family of expanders. While one may design such graphs where
 the \SRW\ has no cutoff, such constructions seem highly asymmetric, and the following conjecture seems plausible (see also \cite{DLP}*{Question 5.2}):
\begin{conjecture}
  The \emph{\SRW} on any family of vertex-transitive expander graphs exhibits cutoff.
\end{conjecture}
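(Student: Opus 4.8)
\medskip
\noindent\emph{Towards a proof of the conjecture.}
The plan is to follow the three-step architecture used above for $\G(n,d)$ --- locate where the walk typically sits after $t$ steps, show this location concentrates sharply, and match it against a ball-escape lower bound --- while substituting for the locally-tree-like input (Lemmas~\ref{lem-leq-one-bad-edge}--\ref{lem-k-roots-non-intersections}) the only structural resources now available: vertex-transitivity and a spectral gap bounded away from $0$. Transitivity at once removes the worst-case issue, since $d_n(t)=\|\P_x(X_t\in\cdot)-\pi\|_\mathrm{TV}$ is then independent of $x$; moreover $4\,d_n(t)^2\le\sum_{i\ge 2}\lambda_i^{2t}=n\,\P_x(X_{2t}=x)-1$, which together with $\gap$ bounded below recovers the $O(\log n)$ ceiling --- but only the ceiling, since for transitive expanders the $L^2$ and total-variation mixing times can genuinely differ, so the $L^2$ estimate can serve only as an a priori bound.

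For the lower bound I would run the walk until it escapes a ball. On a $d$-regular transitive expander $|B_r(x)|\le d(d-1)^{r-1}$, so $\pi(B_r(x))=o(1)$ once $r$ is a small constant factor below $\log_{d-1}n$; and since $\dist(X_t,x)$ increases by at most one per step, while $\E\,\dist(X_n,x)$ is subadditive on a transitive graph and hence $\E\,\dist(X_n,x)/n\to v$ for a linear escape speed $v\le (d-1)/d<1$, one should get $\dist(X_t,x)\le (v+o(1))t$ \whp, so $d_n(t)\to 1$ for every $t\le (1-o(1))v^{-1}\log_{d-1}n$. The conjectural cutoff location is therefore $v^{-1}\log_{d-1}n$ (for $\G(n,d)$ one has $v=(d-2)/d$, recovering Theorem~\ref{thm-rw}). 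For the matching upper bound I would try to prove that at $t=(1+\delta)v^{-1}\log_{d-1}n$ the heat-kernel densities $\{n\,\P_x(X_t=y)\}_y$ concentrate around $1$ in $L^1$; the spectral gap bounds their $L^2$ deviation, $\E_\pi\big[(n\,\P_x(X_t=\cdot)-1)^2\big]=n\,\P_x(X_{2t}=x)-1$, but this only yields cutoff at the $L^2$ location, and upgrading to total variation at the correct point $v^{-1}\log_{d-1}n$ is the real content.

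That upgrade is where I expect the genuine obstacle to lie. In the present paper the decisive ingredient is the near-Poisson count of length-$t$ paths between most pairs of vertices (Lemma~\ref{lem-k-roots-intersections}), which rests squarely on local tree-likeness and on the configuration-model exploration; for an arbitrary vertex-transitive expander --- e.g.\ a Cayley graph with many short relators --- neither is available, and the path-counting and Poissonization arguments collapse. A robust replacement would have to come from elsewhere: in the Cayley case one could hope to feed in representation theory, bounding $\|\P_x(X_t\in\cdot)-\pi\|_\mathrm{TV}$ through the non-trivial irreducibles and their multiplicities in the spirit of the Diaconis--Shahshahani upper-bound lemma; for genuinely general transitive graphs one would need a new geometric estimate playing the role of Lemma~\ref{lem-k-roots-non-intersections} --- perhaps an entropy/varentropy bound for the walk, or a canonical non-backtracking surrogate stripping away the backtracking noise as in Theorem~\ref{thm-nbrw} --- and producing such an estimate without a tree model underneath is precisely the hard step, which is why the statement is phrased only as a conjecture.
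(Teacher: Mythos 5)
There is nothing in the paper to compare your argument against: the statement you were given is posed in Section~\ref{sec::conclusion} precisely as an open \emph{conjecture}, and the authors supply no proof of it --- the machinery of the paper (configuration-model exploration, Lemmas~\ref{lem-leq-one-bad-edge}--\ref{lem-k-roots-intersections}, the Poissonization in Proposition~\ref{prop-poisson}) is intrinsically tied to the randomness of $\G(n,d)$ and is not claimed to extend to arbitrary vertex-transitive expanders. Your submission, as you yourself say in its closing paragraph, is a programme rather than a proof, so it has a genuine gap in the only place that matters: the total-variation upper bound at the conjectured time is never established. The spectral input $4\,d_n(t)^2\le n\,\P_x(X_{2t}=x)-1$ only localizes the $L^2$ mixing time, and for transitive expanders the $L^2$ and total-variation times can differ by a constant factor, so no amount of massaging that inequality alone can produce cutoff at the right location; what is needed is a substitute for the path-counting/Poissonization step (Lemma~\ref{lem-k-roots-intersections}), i.e.\ a proof that the mass of $\P_x(X_t\in\cdot)$ spreads nearly uniformly over $(1-o(1))n$ vertices at the claimed time, and you do not supply one.

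Two further points deserve care if you pursue this. First, the conjectured location $v^{-1}\log_{d-1}n$ is itself unjustified in general: the identification of the cutoff time with ``speed $\times$ time $=$ radius at which balls exhaust the graph'' relies on the ball of radius $r$ having $(1-o(1))d(d-1)^{r-1}$ vertices \emph{and} on the conditional distribution at a given distance being close to uniform on the sphere, both of which are consequences of local tree-likeness in the paper and can fail badly for transitive graphs with many short cycles (there the natural candidate involves the entropy of the walk rather than the speed, and these coincide only in tree-like situations). Second, your lower bound needs the distance $\dist(X_t,x)$ to \emph{concentrate} at $(v+o(1))t$ for the specific finite graph $G_n$; subadditivity gives existence of a speed for a fixed infinite transitive graph, but for a sequence of finite graphs one must quantify this (e.g.\ via Azuma, as in Lemma~\ref{lem-srw-to-good-roots}) uniformly in $n$, and the value $v=v_n$ may depend on $n$. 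None of this makes your outline wrong as a research plan --- identifying the $L^1$-versus-$L^2$ upgrade and the absence of a tree model as the obstruction is exactly the right diagnosis --- but the statement remains, for you as for the paper, a conjecture.
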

\item Similarly, recalling the above comparison of $\tmix$ of the \SRW\ and the \NBRW\ on random regular graphs, it would be interesting
to extend this result to any family of vertex-transitive expander graphs.
\end{list}

\section*{Acknowledgments}

We are grateful to Yuval Peres for sharing his deep insight on cutoff with us over many fruitful discussions, and in particular, for simplifying our proof of Theorem \ref{thm-rw} via the duality between \NBRW s and \SRW s (Observation \ref{obs-cover-tree}). We thank Laurent Saloff-Coste for useful discussions, as well as Jian Ding for comments on an earlier version. We also wish to thank the anonymous referees for helpful comments.


\begin{bibdiv}
\begin{biblist}

\bib{Ajtai}{article}{
   author={Ajtai, M.},
   title={Recursive construction for $3$-regular expanders},
   journal={Combinatorica},
   volume={14},
   date={1994},
   number={4},
   pages={379--416},
}

\bib{Aldous}{article}{
  author = {Aldous, David},
  title = {Random walks on finite groups and rapidly mixing {M}arkov chains},
  booktitle = {Seminar on probability, XVII},
  series = {Lecture Notes in Math.},
  volume = {986},
  pages = {243--297},
  publisher = {Springer},
  address = {Berlin},
  year = {1983},
}

\bib{AD}{article}{
  author = {Aldous, David},
  author = {Diaconis, Persi},
  title = {Shuffling cards and stopping times},
  journal = {Amer. Math. Monthly},
  volume = {93},
  pages = {333--348},
  year = { 1986 },
}

\bib{AF}{book}{
    AUTHOR = {Aldous, David},
    AUTHOR = {Fill, James Allen},
    TITLE =  {Reversible {M}arkov Chains and Random Walks on Graphs},
    note = {In preparation, \texttt{http://www.stat.berkeley.edu/\~{}aldous/RWG/book.html}},
}

\bib{ABLS}{article}{
   author={Alon, Noga},
   author={Benjamini, Itai},
   author={Lubetzky, Eyal},
   author={Sodin, Sasha},
   title={Non-backtracking random walks mix faster},
   journal={Commun. Contemp. Math.},
   volume={9},
   date={2007},
   number={4},
   pages={585--603},
}

\bib{AS}{book}{
  author={Alon, Noga},
  author={Spencer, Joel H.},
  title={The probabilistic method},
  edition={3},
  publisher={John Wiley \& Sons Inc.},
  place={Hoboken, NJ},
  date={2008},
  pages={xviii+352},
}

\bib{AFH}{article}{
 AUTHOR = {Angel, Omer},
 AUTHOR = {Friedman, Joel},
 AUTHOR = {Hoory, Shlomo},
 TITLE = {The non-backtracking spectrum of the universal cover of a graph},
 status = {preprint},
}

\bib{BD}{article}{
  author={Berestycki, Nathana{\"e}l},
  author={Durrett, Rick},
  title={Limiting behavior for the distance of a random walk},
  journal={Electron. J. Probab.},
  volume={13},
  date={2008},
  pages={374--395},
}

\bib{BS}{article}{
  title={On the second eigenvalue of random regular graphs},
  author={Broder, Andrei},
  author={Shamir, Eli},
  conference={
      title={Proc. of the 28th Annual Symposium on the Foundations of Computer Science},
      year={1987},
  },
  volume={},
  number={},
  pages={286--294},
}

\bib{Bollobas1}{article}{
   author={Bollob{\'a}s, B{\'e}la},
   title={A probabilistic proof of an asymptotic formula for the number of labelled regular graphs},
   journal={European J. Combin.},
   volume={1},
   date={1980},
   number={4},
   pages={311--316},
}

\bib{Bollobas2}{book}{
  author={Bollob{\'a}s, B{\'e}la},
  title={Random graphs},
  series={Cambridge Studies in Advanced Mathematics},
  volume={73},
  edition={2},
  publisher={Cambridge University Press},
  place={Cambridge},
  date={2001},
  pages={xviii+498},
}

\bib{CS}{article}{
   author = {Chen, Guan-Yu},
   author = {Saloff-Coste, Laurent},
   title = {The cutoff phenomenon for ergodic Markov processes},
   journal = {Electronic Journal of Probability},
   volume = {13},
   year = {2008},
   pages = {26--78},
}

\bib{Diaconis}{article}{
  author = {Diaconis, Persi},
  title = {The cutoff phenomenon in finite {M}arkov chains},
  journal = {Proc. Nat. Acad. Sci. U.S.A.},
  volume = {93},
  year = {1996},
  number = {4},
  pages = {1659--1664},
}

\bib{DS}{article}{
  author = {Diaconis, Persi},
  author = {Shahshahani, Mehrdad},
  title = {Generating a random permutation with random transpositions},
  journal = {Z. Wahrsch. Verw. Gebiete},
  volume = {57},
  year = {1981},
  number = {2},
  pages = {159--179},
}

\bib{DLP}{article}{
  author = {Ding, Jian},
  author = {Lubetzky, Eyal},
  author = {Peres, Yuval},
  title = {Total-variation cutoff in birth-and-death chains},
  journal = {Probab. Theory Related Fields},
  status = {to appear},
}

\bib{Dou}{thesis}{
  author={Dou, C.C.Z.},
  title={Studies of random walks on groups and random graphs},
  year={1992},
  type={Ph.D. thesis},
  school={Massachusetts Institute of Technology, Dept. of Mathematics}
}

\bib{Durrett}{book}{
  author={Durrett, Rick},
  title={Random graph dynamics},
  series={Cambridge Series in Statistical and Probabilistic Mathematics},
  publisher={Cambridge University Press},
  place={Cambridge},
  date={2007},
  pages={x+212},
}

%

\bib{Friedman}{article}{
   author={Friedman, Joel},
   title={A proof of Alon's second eigenvalue conjecture and related
   problems},
   journal={Mem. Amer. Math. Soc.},
   volume={195},
   date={2008},
   number={910},
   pages={viii+100},
}

\bib{Hildebrand2}{article}{
   author={Hildebrand, Martin},
   title={Generating random elements in ${\rm SL}\sb n({\bf F}\sb q)$ by random transvections},
   journal={J. Algebraic Combin.},
   volume={1},
   date={1992},
   number={2},
   pages={133--150},
}

\bib{Hildebrand}{article}{
   author={Hildebrand, Martin},
   title={Random walks on random simple graphs},
   journal={Random Structures Algorithms},
   volume={8},
   date={1996},
   number={4},
   pages={301--318},
}

\bib{JLR}{book}{
  author={Janson, Svante},
  author={{\L}uczak, Tomasz},
  author={Rucinski, Andrzej},
  title={Random graphs},
  series={Wiley-Interscience Series in Discrete Mathematics and Optimization},
  publisher={Wiley-Interscience, New York},
  date={2000},
  pages={xii+333},
}

\bib{Lovasz}{article}{
   author={Lov{\'a}sz, L.},
   title={Random walks on graphs: a survey},
   conference={
      title={Combinatorics, Paul Erd\H os is eighty, Vol.\ 2 (Keszthely,
      1993)},
   },
   book={
      series={Bolyai Soc. Math. Stud.},
      volume={2},
      publisher={J\'anos Bolyai Math. Soc.},
      place={Budapest},
   },
   date={1996},
   pages={353--397},
}

\bib{McKay}{article}{
   author={McKay, Brendan D.},
   title={Asymptotics for symmetric $0$-$1$ matrices with prescribed row sums},
   journal={Ars Combin.},
   volume={19},
   date={1985},
   number={A},
   pages={15--25},
}

\bib{Peres}{article}{
  author = {Peres, Yuval},
  conference = {
        title = {American Institute of Mathematics (AIM) research workshop ``Sharp Thresholds for Mixing Times''},
        address = {Palo Alto},
        date = {December 2004},
        },
  note ={ Summary available at \texttt{http://www.aimath.org/WWN/mixingtimes}},
}  

\bib{Pinsker}{article}{
  author = {Pinsker, Mark S.},
  title = {On the complexity of a concentrator},
  conference = {
      title = {7th Annual Teletraffic Conference},
      address = {Stockholm},
      date = {1973},
  },
}

\bib{RVW}{article}{
   author={Reingold, Omer},
   author={Vadhan, Salil},
   author={Wigderson, Avi},
   title={Entropy waves, the zig-zag graph product, and new constant-degree
   expanders},
   journal={Ann. of Math. (2)},
   volume={155},
   date={2002},
   number={1},
   pages={157--187},
}

\bib{SaloffCoste}{article}{
  author = {Saloff-Coste, Laurent},
  title = {Random walks on finite groups},
  booktitle = {Probability on discrete structures},
  series = {Encyclopaedia Math. Sci.},
  volume = {110},
  pages = {263--346},
  publisher = {Springer},
  address = {Berlin},
  year = {2004},
}


\bib{Wormald}{article}{
   author={Wormald, N. C.},
   title={Models of random regular graphs},
   conference={
      title={Surveys in combinatorics, 1999 (Canterbury)},
   },
   book={
      series={London Math. Soc. Lecture Note Ser.},
      volume={267},
      publisher={Cambridge Univ. Press},
      place={Cambridge},
   },
   date={1999},
   pages={239--298},
}

\end{biblist}
\end{bibdiv}

\end{document}